\newcommand{\NATURE}{{\cal K}}
\renewcommand{\nature}{k}
\newcommand{\BELIEF}{\Delta\np{\NATURE}}
\newcommand{\belief}{p}
\newcommand{\beliefbis}{q}
\newcommand{\Entropy}{H}
\renewcommand{\borel}[1]{\tribu{B}({#1})}
\newcommand{\bpborel}[1]{\tribu{B}\bp{#1}}
\newcommand{\InformationStructures}{\mathfrak{I}}
\newcommand{\RestrictedSupportFunction}[1]{\widetilde{\sigma}_{#1}}
\newcommand{\InverseRestrictedSupportFunction}[1]{\theta_{#1}}
\newcommand{\FromACTIONPAYOFFSETtoConvBELIEF}{\Sigma}
\newcommand{\FromConvBELIEFtoACTIONPAYOFFSET}{\Theta}
\newcommand{\Conv}[1]{\Gamma\np{#1}}
\newcommand{\RegularConv}[1]{\Gamma_{c}\bp{#1}}
\newcommand{\MaximalExpectedUtility}{maximal expected utility}
\newcommand{\ExpectedUtilityMaximizer}{expected utility maximizer}
\newcommand{\ActionPayoff}{utility act}
\newcommand{\ActionPayoffSet}{utility act set}
\newcommand{\ACTIONPAYOFFSET}{\mathcal{A}\np{\NATURE}}
\newcommand{\RegularActionPayoffSet}{c-utility act set}
\newcommand{\REGULARACTIONPAYOFFSET}{\mathcal{A}_{c}\np{\NATURE}}
\newcommand{\RegularValueFunction}{c-value function}
\newcommand{\ACTION}{{G}}
\newcommand{\action}{g}
\newcommand{\ACTIONbis}{{H}}
\newcommand{\actionbis}{h}
\newcommand{\actionter}{i}
\renewcommand{\fonctionun}{g}
\renewcommand{\fonctiondeux}{h}
\newcommand{\NEUTRAL}{\mathfrak{A}}
\newcommand{\neutral}{a}
\newcommand{\NEUTRALbis}{\mathfrak{B}}
\newcommand{\neutralbis}{b}
\newcommand{\NEUTRALpayoff}{U}
\newcommand{\MORE}{{M}}
\newcommand{\more}{m}
\newcommand{\LESS}{{L}}
\newcommand{\less}{l}
\newcommand{\MEDIUM}{{T}}
\newcommand{\medium}{t}
\renewcommand{\ValueFunction}[1]{V_{#1}}
\newcommand{\VoI}{\mathrm{V\!o\!I}}
\renewcommand{\Cone}{X}
\renewcommand{\Primal}{X}
\renewcommand{\Primalbis}{X'}
\renewcommand{\primalbis}{x'}
\newcommand{\signed}{y}
\renewcommand{\sequence}[2]{\np{#1}_{#2}}           
\newcommand{\bpsequence}[2]{\bp{#1}_{#2}}           
\newcommand{\scalpro}[2]{\left\langle#2 \mid \:#1\right\rangle}  
\newcommand{\stardifference}{\!\sim\!}
\title{Increasing Value of Information\\
Implies Separable Utility}
\author{Michel \textsc{De Lara}\thanks{%
CERMICS, \'Ecole nationale des ponts et chauss\'ees, IP Paris, France
E-mail: \texttt{michel.delara@enpc.fr}}}
\begin{document}

\maketitle

    \begin{abstract}%
We consider decision-making under incomplete information about an unknown state of nature.
Utility acts (that is, utility vectors indexed by states of nature) 
and beliefs (probability distributions over the states of nature)
are naturally paired by bilinear duality, giving the expected utility.
With this pairing, an expected utility maximizer (DM) is characterized
by a continuous closed convex comprehensive set of utility acts (c-utility act set).
We show that DM~M values information more than DM~L if and only if
the c-utility act set of DM~M is obtained by Minkowski addition from the c-utility act set of DM~L.
In the classic setting of decision theory, this is interpreted as
the equivalence between more valuable information, on the one hand,
and multiplying decisions and adding utility, on the other hand
(additively separable utility).
We also introduce the algebraic structure of dioid to describe two operations between DMs:
union (``adding'' options) and fusion (multiplying options and adding utilities).
We say that DM~M is more flexible by union (resp. by fusion) than DM~L
if DM~M is obtained by union (resp. by fusion) from DM~L. 
Our main result is that DM~M values information more than DM~L if and only if
DM~M is more flexible by fusion than DM~L. We also study when flexibility by union can lead
to more valuable information.
    \end{abstract}%

\textbf{Keywords:} value of information, separable utility, utility acts-beliefs duality, Minkowski addition, dioid.





 \section{Introduction}
 \label{Introduction}

 An individual goes to an unknown
restaurant, and selects starters (and main course, but no dessert) in the menu;
we denote this decision-maker (DM) by~S.
As the restaurant is unknown, DM~S might be interested in gathering information
to make better choices.
Information has value (see the seminal paper \cite{Marschak-Miyasawa:1968}), and
we ask the question: what changes in the problem of DM~S make information more valuable?
 More generally, in this paper, we focus on the question:
 for a given economic agent, what changes in decision variables and utility function
 make information more valuable?

 The value of information (VoI) is a well-known concept in economics: it is the (nonnegative) difference
between the maximal expected utility with and without knowledge about an unknown state of nature.
It is not the place here to review the large literature on VoI, as our focus is on
the little studied subject of interpersonal comparison of VoIs. 
Interpersonal comparison of the value of information was at the heart of our 2009 working paper~\cite{DeLara:2009}.
More recently, the question has been studied in \cite{whitmeyer2024makinginformationvaluable}, and
 \cite[Theorem~3.1]{whitmeyer2024makinginformationvaluable} obtains that more valuable information
 is equivalent to convexity of the difference of two (convex) value functions
 (a result that can also be found in \cite[Proposition~0]{Jones-Ostroy:1984}).
 However, \cite{whitmeyer2024makinginformationvaluable} only provides sufficient conditions
on decision variables and utility function that leads to more valuable information.
 In this paper, we build upon the convex analysis framework developed in~\cite{DeLara-Gossner:2020}
 --- exploiting duality between utility~acts and beliefs
\cite{Gilboa-Schmeidler:1989,MacMarRusAmbiguity2006,cerreia2011complete} --- 
and we obtain a necessary and sufficient condition, that bears on the decision problem,
for more valuable information.

We illustrate our main result on the unknown restaurant problem.
We consider two new DMs: DM~SorD selects (aside main course)
either starters or desserts (but not both);
DM~SandD selects (aside main course) both starters and desserts.
We show that DM~SandD values information more than DM~S (whatever belief about
the restaurant), whereas there is no systematic comparison between DM~SorD and
DM~S.

The paper is organized as follows.
In Sect.~\ref{Abstract_decision-maker}, we 
recall the duality between utility acts and beliefs in classic decision models,
then move to define abstract decision problems.
In Sect.~\ref{Fusion_makes_information_more_valuable},
we define information structures (and their comparison),
and we define the value of information for a decision-maker.
Then, we state the main result of the paper:
the equivalence between more valuable information, on the one hand,
and multiplying decisions and adding utility, on the other hand.
In Sect.~\ref{Flexibility_and_more_valuable_information},
we introduce dioids of expected utility maximizers, union and fusion of decisions makers,
and two kinds of flexibility (by union or by fusion).
With this, the main result of the paper can be reformulated as 
the equivalence between more valuable information and more flexibility by fusion.
As flexibilities by union or fusion are not exclusive, 
we also study the relation between flexibility by union and more valuable information.
Finally, we relate our results to those in three recent papers
\cite{whitmeyer2024makinginformationvaluable},
\cite{Denti:2022} and \cite{Yoder:2022}.
Sect.~\ref{Conclusion} concludes.
The Appendix contains some material in convex analysis, and technical Propositions and proofs.

\section{Abstract decision-maker}
\label{Abstract_decision-maker}

In~\S\ref{Duality_between_payoffs_and_beliefs},
we introduce a duality between utility~acts and beliefs like in~\cite{DeLara-Gossner:2020}.
In~\S\ref{Classic_decision_problems_under_imperfect_information},
we revisit classic decision problems under imperfect information with this duality.
In~\S\ref{Abstract_decision_problems_and_value_function},
we define abstract decision problems and decision makers.

\subsection{Duality between utility~acts and beliefs}
\label{Duality_between_payoffs_and_beliefs}

We denote by $\RR$ the set of real numbers,
$\barRR = \ClosedIntervalClosed{-\infty}{+\infty} = \RR \cup \na{-\infty,+\infty} $,
$\RR_{+} = \ClosedIntervalOpen{0}{+\infty} $,
$\RR_{++}=\OpenIntervalOpen{0}{+\infty}$.
We consider a nonempty finite set~$\NATURE$, that represents states of nature.
 
\subsubsubsubsection{Functions}\footnote{%
  Adopting usage in mathematics, we follow Serge Lang and use ``function'' only to
  refer to mappings in which the codomain is a set of numbers (i.e. a subset
  of~$\RR$ or $\CC$, or their possible extensions with $\pm \infty$),
  and reserve the term mapping for more general
  codomains.}

Let \( \Primal \subset \RR^{\NATURE} \) be a nonempty set.
For any function \( \fonctionprimal \colon \Primal \to \barRR \),
its \emph{epigraph} is
\( \epigraph\fonctionprimal= \defset{
  \np{\primal,t}\in\Primal\times\RR}%
{\fonctionprimal\np{\primal} \leq t} \subset \RR^{\NATURE}\times\RR \),
its \emph{effective domain} is
\( \dom\fonctionprimal= \defset{\primal\in\Primal}{%
  \fonctionprimal\np{\primal} <+\infty} \).
A function
\( \fonctionprimal \colon \Primal \to \barRR \) is said to be \emph{convex}
if its epigraph is a convex subset of \( \RR^{\NATURE}\times\RR \), 
\emph{proper} if it never takes the
value~$-\infty$ and that \( \dom\fonctionprimal \not = \emptyset \),
\emph{lower semi continuous (\lsc)} if its epigraph is
a closed subset of \( \RR^{\NATURE}\times\RR \).

A function
\( \fonctionprimal \colon \Primal \to \barRR \) is said to be \emph{closed} \cite[p.~15]{Rockafellar:1974}
if it is either \lsc\ and nowhere having the value $-\infty$,
or is the constant function~$-\infty$.
A function \( \fonctionprimal \colon \Primal \to \barRR \) is said to be \emph{closed convex}
if it is either one of the two constant functions~$-\infty$ and~$+\infty$,
or it is a proper convex \lsc\ function.

\subsubsubsubsection{Subsets}

For any subset \( \Primal\subset\RR^{\NATURE} \), we denote by
$\overline{\Primal}$ the \emph{topological closure} of~$\Primal$, 
by $\convexhull \Primal$ (or $\convexhull\np{\Primal}$) the \emph{convex hull} of~$\Primal$ --- that is, the
smallest convex set in~$\RR^{\NATURE}$ containing~$\Primal$ --- by
$\closedconvexhull \Primal$ (or $\closedconvexhull\np{\Primal}$) the
\emph{closed convex hull} of~$\Primal$ --- that is, the smallest closed convex set
in~$\RR^{\NATURE}$ containing~$\Primal$.
%
A subset $\Cone\subset\RR^{\NATURE}\times\RR$ is said to be a \emph{cone}\footnote{%
  Hence, a cone does not necessarily contain the origin~$0$.}  if
$\RR_{++}\Cone \subset \Cone$.
%
The \emph{Minkowski sum} of two subsets $\Primal, \Primalbis\subset\RR^{\NATURE}$ is
\begin{subequations}
  \begin{align}
    {\Primal+\Primalbis}
    &=
      \defset{\primal+\primalbis}{\primal\in \Primal, \primalbis\in \Primalbis}
     = \bigcup_{\primal\in \Primal} \np{\Primalbis +\primal}
      \eqfinv
      \label{eq:Minkowski_sum}
      \intertext{and the \emph{star-difference} is
\cite[Chapter~III, Example~1.2.6]{Hiriart-Urruty-Lemarechal-I:1993} } 
         \Primal\stardifference\Primalbis
        &=
        \defset{\primal\in\RR^{\NATURE}}%
        {\primal + \Primalbis \subset \Primal}
        = \bigcap_{\primal\in \Primal} \np{\Primalbis -\primal}
          \eqfinp
          \label{eq:stardifference}
  \end{align}
\end{subequations}

\subsubsubsubsection{Duality}

Now, we turn to duality.
When considered as a primal space, we identify the vectors in~$\RR^{\NATURE}$ with
functions from states of nature in~\( \NATURE \) to the real numbers in~\( \RR \), 
that we call \emph{\ActionPayoff s}\footnote{%
We follow a terminology found in~\cite{Denti:2022}. 
Indeed, an act ``\`a la Savage'' \cite{Savage:1972} is a mapping from states of nature to consequences
and, here, these latter are measured in utility.}.
When considered as a dual space, we identify the vectors in~$\RR^{\NATURE}$ with
\emph{signed measures} on~$\NATURE$, and the elements of the simplex
\( \BELIEF = \defset{\signed \in \RR_{+}^{\NATURE}}{\sum_{\nature \in \NATURE} \signed_{\nature}=1} 
\subset \RR^{\NATURE} \) with \emph{probability distributions} over~$\NATURE$, also called \emph{beliefs}.
The scalar product between a \ActionPayoff~$\primal \in \RR^{\NATURE}$ and a 
signed measure~$\signed \in \RR^{\NATURE}$ is 
\(  \scalpro{\signed}{\primal} = 
\sum_{\nature \in \NATURE} \primal_{\nature} \signed_{\nature} \). 
This scalar product induces a bilinear duality which is at the core of a series of works in nonexpected utility theory, such as
\cite{Gilboa-Schmeidler:1989,MacMarRusAmbiguity2006,cerreia2011complete}. 

A function \( \fonctionprimal \colon \RR^{\NATURE} \to \barRR \) is
said to be \emph{positively homogeneous}  if its epigraph is a cone.
The \emph{support function} of a subset
$\Primal\subset\RR^{\NATURE}$ is the closed convex
positively homogeneous function
\cite[Chapter~V, Definition~2.1.1]{Hiriart-Urruty-Lemarechal-I:1993} 
defined by
\begin{equation}
\SupportFunction{\Primal} \colon \RR^{\NATURE} \to \barRR \eqsepv    
    \SupportFunction{\Primal}\np{\signed}
    =
      \sup_{\primal\in \Primal} \scalpro{\signed}{\primal} \eqsepv \forall \signed\in\RR^{\NATURE}
      \eqfinp
      \label{eq:support_function}
\end{equation}

\subsubsubsubsection{Closed convex functions \( \BELIEF \to \barRR \)}

Because beliefs in the set \( \BELIEF \subset \RR^{\NATURE} \) will play a central role,
we denote by~\emph{\( \Conv{\BELIEF} \) the set of closed convex functions \( \BELIEF \to \barRR \)}.
For any function \( \fonctiondual \in \Conv{\BELIEF} \)
(and, more generally, for any function \( \fonctiondual \colon \BELIEF \to \barRR \)),
we denote by \( \LFM{\fonctiondual} \colon \RR^{\NATURE} \to \barRR \) 
the Fenchel conjugate of the function~\( \fonctionun \) (extended with the value \( +\infty \)
outside of~\( \BELIEF \)), that is,
\begin{equation}
  \LFM{\fonctiondual}\np{\primal}=
  \sup_{\belief\in\BELIEF} \scalpro{\belief}{\primal}-\fonctiondual\np{\belief}
  \eqsepv \forall \primal\in \RR^{\NATURE}
  \eqfinp
  \label{eq:LFM_beliefs}
\end{equation}
We introduce the notation
\begin{equation}
  \RestrictedSupportFunction{\Primal} \colon \BELIEF \to \barRR \eqsepv
\RestrictedSupportFunction{\Primal}\np{\belief}
=\SupportFunction{\Primal}\np{\belief}
\eqsepv \forall \belief \in \BELIEF 
\eqfinv
      \label{eq:support_function_restriction}
\end{equation}
for the \emph{restriction~\( \RestrictedSupportFunction{\Primal} \) of the support function~\( \SupportFunction{\Primal}\)}
in~\eqref{eq:support_function} to the closed convex subset \( \BELIEF \subset \RR^{\NATURE} \).
We have that \( \RestrictedSupportFunction{\Primal} \in \Conv{\BELIEF} \).
Indeed, the support function~\( \SupportFunction{\Primal}\)
in~\eqref{eq:support_function} is a closed convex function,
hence so is the restriction~\( \RestrictedSupportFunction{\Primal} \)
of~\( \SupportFunction{\Primal}\) to the closed convex subset~\( \BELIEF \subset \RR^{\NATURE} \).
As an example that we will use several times later,
we have that \( \RestrictedSupportFunction{\RR_{-}^{\NATURE}} =0 \),
which follows from the fact that \(\SupportFunction{\RR_{-}^{\NATURE}} \)
is the indicator function~\( \Indicator{\RR_{+}^{\NATURE}} \), which takes the value~0
on \( \RR_{+}^{\NATURE} \) hence on \( \BELIEF \subset \RR_{+}^{\NATURE} \) (see for example
\cite[Example.~2.3.1]{Hiriart-Urruty-Lemarechal-I:1993}). 

     \subsubsubsubsection{Closed convex functions \( \BELIEF \to \RR \)}

     We also denote by~\emph{\( \RegularConv{\BELIEF} \subset \Conv{\BELIEF} \)
the set of closed convex functions \( \BELIEF \to \RR \)}.
As  \( \BELIEF \) is bounded polyhedral, by \cite{Gale-Klee-Rockafellar:1968} we get that \( \RegularConv{\BELIEF} \)
is also the set of continuous\footnote{%
  Hence, the subscript~$c$ in \( \RegularConv{\BELIEF} \).}
  and bounded convex functions \( \BELIEF \to \RR \).
As functions in~\( \RegularConv{\BELIEF} \) take real values (and not extended
ones), they are easily amenable to susbstraction operations. This property will
be decisive when properly defining the value of information
in~\S\ref{Information_structures_and_value_of_information},
by avoiding indefinite integrals (mathematical expectations) in Definition~\ref{de:VoI}. 

For instance, the \emph{negentropy} \( -\Entropy \colon \BELIEF \to \barRR \) --- defined by
\begin{equation}
  -\Entropy\np{\belief}= \sum_{\nature\in\NATURE}\belief_\nature \log \belief_\nature
  \eqsepv \forall \belief\in\BELIEF
  \eqfinv
  \label{eq:negentropy} 
\end{equation}
with the convention that \( 0\log 0 = 0 \) --- belongs to~\( \RegularConv{\BELIEF} \).

\subsection{Classic decision problems under imperfect information}
\label{Classic_decision_problems_under_imperfect_information}

A {(classic) decision problem} (on~$\NATURE$) is usually given by a nonempty set of actions~$\NEUTRAL$ and by
a \emph{payoff or utility function}~$\NEUTRALpayoff \colon \NEUTRAL\times \NATURE\to \RR$.
Under belief~$\belief \in \BELIEF$, the decision maker chooses 
a decision~$\neutral\in\NEUTRAL$ that maximizes~$\sum_\nature \belief_\nature \NEUTRALpayoff(\neutral,\nature)$.
As far as we are concerned with \MaximalExpectedUtility,
we are going to illustrate how we can move from utility functions to 
value functions defined over beliefs~$\BELIEF$
and, also, to subsets of \ActionPayoff s in~\( \RR^{\NATURE} \). 


\begin{subequations}
The resulting \MaximalExpectedUtility\ gives rise to the so-called \emph{value function}
\begin{equation}
  \ValueFunction{\NEUTRALpayoff} \colon \BELIEF \to \barRR \eqsepv
\ValueFunction{\NEUTRALpayoff}(\belief)
= \sup_{\neutral\in\NEUTRAL}
\sum_\nature \belief_\nature \NEUTRALpayoff(\neutral,\nature)
\eqsepv \forall \belief \in \BELIEF 
  \eqfinp
    \label{eq:value_function_utility_classic}
\end{equation}
In all that follows, we suppose that the value function \( \ValueFunction{\NEUTRALpayoff} \)
takes finite values (on~\( \BELIEF \)).
Inspired by the approach in~\cite{DeLara-Gossner:2020}, we observe
that the expected utility
  \begin{align}
    \text{expected utility }
    &= 
  \sum_\nature \belief_\nature \NEUTRALpayoff(\neutral,\nature)
      = \scalpro{\belief}{\NEUTRALpayoff(\neutral,\cdot)}
      \eqfinv
\label{eq:expected_payoff}
\intertext{ depends on the \ActionPayoff\ 
\( \NEUTRALpayoff(\neutral,\cdot)=
\sequence{\NEUTRALpayoff(\neutral,\nature)}{\nature \in \NATURE} \in \RR^{\NATURE} \),
and that the maximal expected utility}
      \text{maximal expected utility }
    &=
      \sup_{\neutral\in\NEUTRAL}
  \sum_\nature \belief_\nature \NEUTRALpayoff(\neutral,\nature)
      = \sup_{\neutral\in\NEUTRAL}\scalpro{\belief}{ \NEUTRALpayoff(\neutral,\cdot)}
      \label{eq:maximal_expected_payoff}
  \end{align}
depends on the following set of \ActionPayoff s
\begin{equation}
\ACTION_{0}= \{ \NEUTRALpayoff(\neutral,\cdot), 
\neutral\in\NEUTRAL\} \subset \RR^{\NATURE}
\eqfinp
\label{eq:payoff_vectors}
\end{equation}
The {value function} defined in~\eqref{eq:value_function_utility_classic}
can now also be written as
\begin{equation}
  \ValueFunction{\NEUTRALpayoff}(\belief)
= \sup_{\neutral\in\NEUTRAL}
\sum_\nature \belief_\nature \NEUTRALpayoff(\neutral,\nature)
 = \RestrictedSupportFunction{\ACTION_{0}}(\belief)
\eqsepv \forall \belief \in \BELIEF 
  \eqfinv
  \label{eq:value_function=support_function_classic}
\end{equation}
where the equality follows from the definition~\eqref{eq:support_function_restriction}
of the restriction~\( \RestrictedSupportFunction{\ACTION_{0}} \)
of the support function~\( \SupportFunction{\ACTION_{0}} \)
to the closed convex subset~\( \BELIEF \subset \RR^{\NATURE} \).
So, as a first step, we have obtained that the value function~\( \ValueFunction{\NEUTRALpayoff} \)
in~\eqref{eq:value_function_utility_classic} can be replaced by the set~$\ACTION_{0}$ given by~\eqref{eq:payoff_vectors}.
Notice that if two decisions give the same \ActionPayoff\
--- that is, the same element in~$\ACTION_{0}$ ---
they can be conflated into a single decision, and this is precisely what the set~$\ACTION_{0}$ captures
(minimality of decisions for maximal expected utility). 

But we can go further. Indeed, if we take the convex closure (\( \ACTION_{0} \to \convexhull\ACTION_{0} \)),
and then add vectors in \( \RR_{-}^{\NATURE} \)
(\( \convexhull\ACTION_{0} \to \RR_{-}^{\NATURE}+ \convexhull\ACTION_{0} \)),
these two successive operations do not affect the maximal expected utility,
by~\eqref{eq:support_function_closedconvexhull} and because
adding vectors in \( \RR_{-}^{\NATURE} \) with all entries nonpositive
lowers the expected utility. 
Thus, the resulting set 
\begin{equation}
  \ACTION_{\NEUTRALpayoff} ={\RR_{-}^{\NATURE}+ \convexhull\ACTION_{0}}
  = \RR_{-}^{\NATURE}+ \convexhull\bp{ \NEUTRALpayoff(\neutral,\cdot), \neutral\in\NEUTRAL }
  \subset \RR^{\NATURE}
\label{eq:ACTION_classic}
\end{equation}
is a closed\footnote{%
The sum of two convex sets is convex, but the sum of two closed sets
is not necessarily closed. 
However, here the set~\( {\RR_{-}^{\NATURE}+ \convexhull\ACTION_{0}} \) is closed
as a consequence of Item~\ref{it:isomorphism_continuous_subsets} in Proposition~\ref{pr:isomorphism_continuous}, 
because we have supposed that the value function \( \ValueFunction{\NEUTRALpayoff} 
= \RestrictedSupportFunction{\ACTION_{0}} \)
in~\eqref{eq:value_function=support_function_classic}
takes finite values (on~\( \BELIEF \)).}
  convex comprehensive set\footnote{%
    We say that \( \ACTION \subset \RR^{\NATURE} \) is a \emph{comprehensive set}
(also called \emph{lower set})
if \( \RR_{-}^{\NATURE}+{\ACTION} \subset {\ACTION} \)
  (or, equivalently, if \( \RR_{-}^{\NATURE}+{\ACTION} = {\ACTION} \)
  as \( 0\in \RR_{-}^{\NATURE} \)).
  The interpretation of ``lower'' is as follows: let us define the (partial) order on~\( \RR^{\NATURE} \)
  as the one given by the closed convex cone~\( \RR_{+}^{\NATURE} \); 
then,  \( \ACTION \subset \RR^{\NATURE} \) is a lower set if and only if
\( \ACTION \) contains all its minorants.
\label{ft:comprehensive_set}
} 
such that
\begin{equation}
  \ValueFunction{\NEUTRALpayoff}(\belief)
= \sup_{\neutral\in\NEUTRAL}
\sum_\nature \belief_\nature \NEUTRALpayoff(\neutral,\nature)
 = \RestrictedSupportFunction{\ACTION_{0}}(\belief)
= \RestrictedSupportFunction{\ACTION_{\NEUTRALpayoff}}(\belief)
\eqsepv \forall \belief \in \BELIEF 
  \eqfinp 
  \label{eq:value_function=support_function_classic_bis}
\end{equation}
\label{eq:ACTION_classic+value_function=support_function_classic_bis}
\end{subequations}
 Thus, as far as we are concerned with \MaximalExpectedUtility\,
 we can replace the utility function~$\NEUTRALpayoff \colon \NEUTRAL\times \NATURE\to \RR$
 by the set~$\ACTION_{0}$ given by~\eqref{eq:payoff_vectors},
 and also by the closed convex comprehensive set~$\ACTION_{\NEUTRALpayoff}$ given by~\eqref{eq:ACTION_classic}.
 Thus doing, we have conflated a classic decision problem into a closed convex comprehensive set of \ActionPayoff s.
In what follows, we will abstract from the classic definition of a decision problem,
and directly consider such subsets of \ActionPayoff s.

\subsection{Abstract decision problems under imperfect information}
\label{Abstract_decision_problems_and_value_function}

\subsubsubsection{Definition of \RegularActionPayoffSet s}

After the discussion in~\S\ref{Classic_decision_problems_under_imperfect_information},
and especially Equation~\eqref{eq:ACTION_classic}, 
we consider sets of \ActionPayoff s as follows.

\begin{definition}
\label{de:4C_action_set} 
We say that the subset \( \ACTION \subset \RR^{\NATURE} \) is a \RegularActionPayoffSet\ on~$\NATURE$ if
\begin{itemize}
\item
  the set \( \ACTION \) is closed convex and comprehensive, that is,
\( \ACTION \) is a closed convex set and satisfies (see Footnote~\ref{ft:comprehensive_set})
    \( \RR_{-}^{\NATURE}+{\ACTION} \subset {\ACTION} \)
  (or, equivalently, \( \RR_{-}^{\NATURE}+{\ACTION} = {\ACTION} \)),
\item
  the set \( \ACTION \) is continuous\footnote{%
    This notion of continuity for closed convex sets has nothing to do with the smoothness of their frontier
    (see the examples below).
}
    in the sense that \( \RestrictedSupportFunction{\ACTION} \in\RegularConv{\BELIEF} \).
\end{itemize}
We denote by \( \REGULARACTIONPAYOFFSET \subset 2^{\RR^{\NATURE}} \) the set of all
\RegularActionPayoffSet s on~$\NATURE$.
\end{definition}
    For any \RegularActionPayoffSet~\( \ACTION \in \REGULARACTIONPAYOFFSET \),
the function~\( \RestrictedSupportFunction{\ACTION} \) is convex and bounded.
This property will be decisive when properly defining the value of information
in~\S\ref{Information_structures_and_value_of_information},
by avoiding indefinite integrals (mathematical expectations) in
Definition~\ref{de:VoI}.

As an example, the set \( \ACTION = \RR^{\NATURE} \) is a closed convex comprehensive set,
but is not continuous as \( \RestrictedSupportFunction{\RR^{\NATURE}} =+\infty \not\in\RegularConv{\BELIEF} \),
whereas the set \( \ACTION = \RR_{-}^{\NATURE} \) is a closed convex comprehensive set,
which is continuous as \( \RestrictedSupportFunction{\RR_{-}^{\NATURE}} =0\in\RegularConv{\BELIEF} \).

\subsubsubsubsection{Graphical examples of \RegularActionPayoffSet s}

Recall that a subset of~\( \RR^{\NATURE} \) is \emph{polyhedral} if it is a ﬁnite intersection of closed half-spaces
\cite[p.~256]{Bauschke-Combettes:2017}, hence is closed convex.
A \emph{polyhedral \RegularActionPayoffSet} is a continuous polyhedral comprehensive set.
Polyhedral \RegularActionPayoffSet s correspond to finite decisions sets in the classic setting of decision theory.
Two illustrations of \RegularActionPayoffSet s are given in Figure~\ref{fig:polyhedralActionPayoffSet} (polyhedral)
 and in Figure~\ref{fig:nonpolyhedralActionPayoffSet} (non polyhedral).

        \begin{figure}[hbt!]
      \centering
      \begin{tikzpicture}
  \coordinate (A) at (-3,0);      
  \coordinate (B) at (0,0);       \node[above] at (B) {B};
  \coordinate (C) at (4,-1);      \node[above right] at (C) {C};
  \coordinate (D) at (7,-5);      \node[right] at (D) {D};
  \coordinate (E) at (7,-7);     
  \coordinate (F) at (-3,-7);
  
  \fill[pattern=north east lines, pattern color=gray!70]
    (A) -- (B) -- (C) -- (D) -- (E) -- (F);

  \draw[thick]
    (A) -- (B) -- (C) -- (D) -- (E);
  \end{tikzpicture}
  \caption{Example of polyhedral {\RegularActionPayoffSet} in the case of \( \cardinality{\NATURE}=2 \) states of nature
and, in the classic decision problem setting, of a finite set of three actions (corresponding to the points~$B$, $C$ and $D$)}
      \label{fig:polyhedralActionPayoffSet}
    \end{figure}
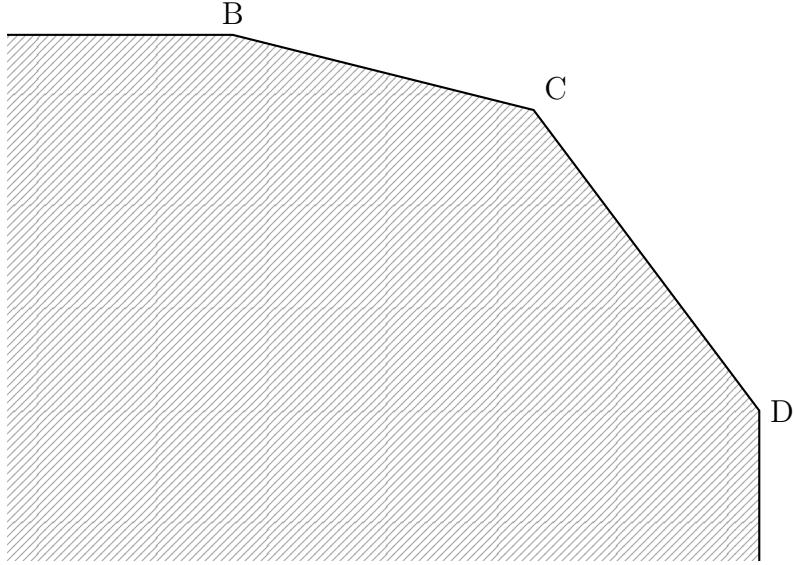

            \begin{figure}[hbt!]
      \centering
\begin{tikzpicture}
  \coordinate (A) at (-4,0);      
  \coordinate (B) at (0,0);       \node[above] at (B) {B};
  \coordinate (C) at (4,-1);      \node[above right] at (C) {C};
  \coordinate (D) at (8,-5);      \node[right] at (D) {D};
  \coordinate (E) at (9,-9);      \node[right] at (E) {E};
  \coordinate (F) at (9,-11);    
  \coordinate (G) at (-4,-11);    

  \fill[pattern=north east lines, pattern color=gray!70]
  (A) -- (B) -- (C)
  .. controls (6,-1.95) .. (D)
  -- (E) -- (F) -- (G);
  
  \draw[thick]
    (A) -- (B) -- (C)
    .. controls (6,-1.95) .. (D)
    -- (E) -- (F);
  \end{tikzpicture}
  \caption{Example of (nonpolyhedral) {\RegularActionPayoffSet} in the case of \( \cardinality{\NATURE}=2 \) states of nature
    and, in the classic decision problem setting, of an infinite set of actions
    which is the union of four actions (corresponding to the points~$B$, $C$, $D$ and $E$) and of a continuum of actions
    (between~$C$ and~$D$)}
      \label{fig:nonpolyhedralActionPayoffSet}
    \end{figure}
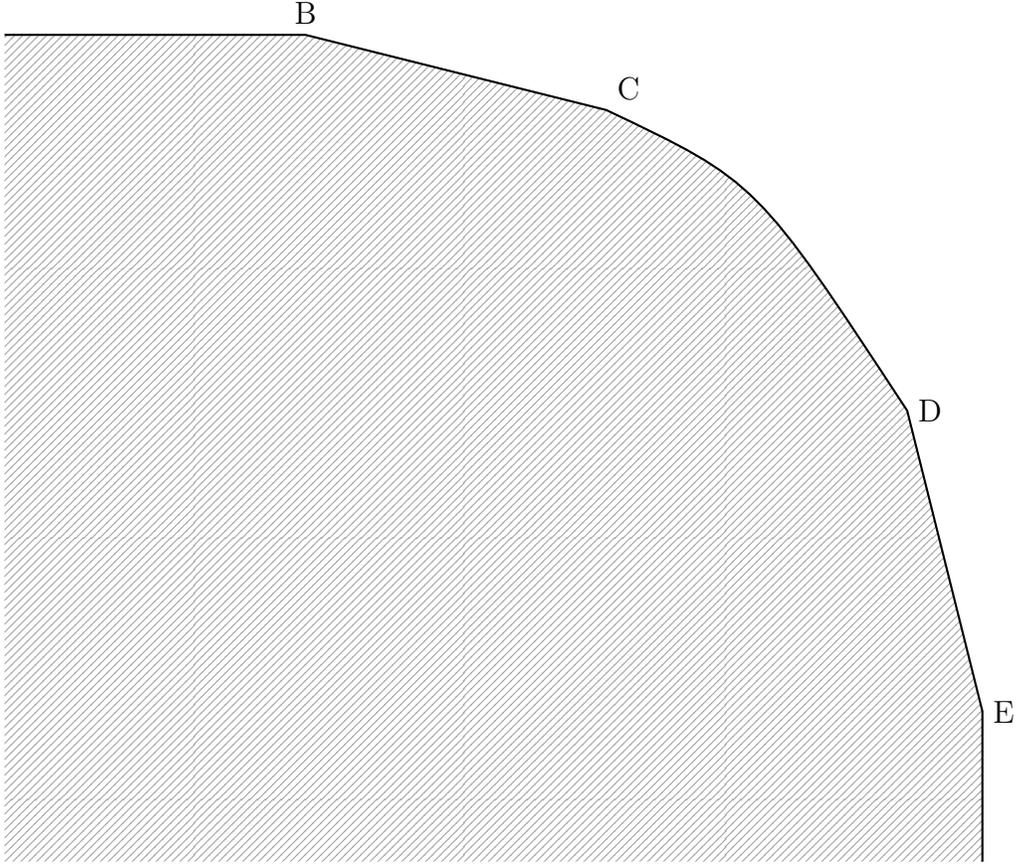

\subsubsubsection{Abstract decision problem}

In~\S\ref{Classic_decision_problems_under_imperfect_information},
we have seen that, with a classic decision problem for which  the value function~\( \ValueFunction{\NEUTRALpayoff} \)
in~\eqref{eq:value_function_utility_classic} takes finite values (on~\( \BELIEF \)), we can associate
a \RegularActionPayoffSet~\( \ACTION_{\NEUTRALpayoff} \in \REGULARACTIONPAYOFFSET \) in~\eqref{eq:ACTION_classic}
such that \( \ValueFunction{\NEUTRALpayoff} = \RestrictedSupportFunction{\ACTION_{\NEUTRALpayoff}} \)
by~\eqref{eq:value_function=support_function_classic_bis}. 
This motivates the following definition.

\begin{definition}
  \label{de:abstract_decision_problem}
  An \emph{abstract decision problem} is defined by a
  \RegularActionPayoffSet~\( \ACTION \in \REGULARACTIONPAYOFFSET \). 
  We will speak of the \emph{(abstract) decision maker}~\( \ACTION \).
\end{definition}

As an elementary example, a (degenerate) decision-maker making a decision taken from a singleton
is represented by \( \hat{\neutral}\in \RR^{\NATURE} \) and the associated
closed convex comprehensive set~\( \ACTION = \RR_{-}^{\NATURE}+\na{\hat{\neutral}} \).
The corresponding {value function} is\footnote{%
Here, for any \( \primal\in \RR^{\NATURE} \),
\( \scalpro{\cdot}{\primal} \) denotes the function
\( \BELIEF \ni\belief \mapsto \scalpro{\belief}{\primal} \)
(but it can denote the function \( \RR^{\NATURE} \ni\signed \mapsto \scalpro{\signed}{\primal} \)
in another context).
\label{ft:proscal_cdot_primal}
}
\( \RestrictedSupportFunction{\ACTION}= \scalpro{\cdot}{\hat{\neutral}} \)
because, by~\eqref{eq:support_function_restriction} and \eqref{eq:support_function_Minkowski}, 
we have that \( \RestrictedSupportFunction{\ACTION} =
\RestrictedSupportFunction{\RR_{-}^{\NATURE} + \na{\hat{\neutral}}}
=\RestrictedSupportFunction{\RR_{-}^{\NATURE}} + \RestrictedSupportFunction{\na{\hat{\neutral}}}
= 0 + \scalpro{\cdot}{\hat{\neutral}}
= \scalpro{\cdot}{\hat{\neutral}} \).
As \( \RestrictedSupportFunction{\ACTION} = \scalpro{\cdot}{\hat{\neutral}} \) is bounded
(see Footnote~\ref{ft:proscal_cdot_primal} recalling that we are considering functions defined
over~\( \BELIEF \)),
we get that \( \ACTION =\RR_{-}^{\NATURE}+\na{\hat{\neutral}} \in \REGULARACTIONPAYOFFSET \)
by Definition~\ref{de:4C_action_set}. 

\subsubsubsection{From abstract to classic decision problem}

With an abstract decision problem defined by a \RegularActionPayoffSet~\( \ACTION \in \REGULARACTIONPAYOFFSET \) on~$\NATURE$,
we can associate the classic decision problem defined by the decision set~\( \ACTION \subset \RR^{\NATURE} \) and the utility function
\begin{equation}
  \ACTION\times\NATURE \ni \np{\action,\nature} \mapsto \action_{\nature} \in\RR
  \eqfinp
\label{eq:from_abstract_to_classic_decision_problem}
\end{equation}
This way of doing is canonical, but there are other ways.
We will encounter in Theorem~\ref{th:more_valuable_information} and
Proposition~\ref{pr:more_valuable_information_implies_in_the_classic_setting}
the case of a DM with
\RegularActionPayoffSet\ of the form~\( \MORE = {\LESS +\MEDIUM} \), where \( \LESS, \MEDIUM \in \REGULARACTIONPAYOFFSET \)
(it will be proved in Item~\ref{it:isomorphism_continuous_subsets} in Proposition~\ref{pr:isomorphism_continuous} that
\( \MORE = {\LESS +\MEDIUM} \in \REGULARACTIONPAYOFFSET \)).
Among the many ways to associate with \( \MORE = {\LESS +\MEDIUM} \) a classic decision problem,
we will single out the one with Cartesian product decision set~\( \LESS\times\MEDIUM \)
and additively separable utility function
\begin{equation}
  \np{\LESS\times\MEDIUM}\times\NATURE \to \RR \eqsepv
  \bp{\np{\less,\medium},\nature} \mapsto \less_{\nature}+\medium_{\nature} \in\RR
  \eqfinp
  \label{eq:Cartesian_product_decision_set}
\end{equation}

\section{More valuable information}
\label{Fusion_makes_information_more_valuable}

In~\S\ref{Information_structures_and_value_of_information}
and~\S\ref{Comparison_of_information_structures_and_relative_value_of_information},
we present classic material in decision theory ---
information structures, value of information, comparison of information structures and
relative value of information --- but with our notations in Sect.~\ref{Abstract_decision-maker}.
Then, in~\S\ref{Characterization_of_more_valuable_information}
we state the main result of the paper:
the equivalence between more valuable information
and \ActionPayoffSet s addition.
To stress the role of flexibility, 
in~\S\ref{Characterization_of_more_valuable_information_in_the_classic_setting}
we recast our main result of the paper in the classic setting as follows:
the equivalence between more valuable information, on the one hand,
and multiplying decisions (multiplicative flexibility) and adding utility, on the other hand.

\subsection{Information structures and value of information}
\label{Information_structures_and_value_of_information}

Information structures have been introduced in the seminal paper~\cite{Marschak-Miyasawa:1968}.
Slightly departing from the literature, especially of what we did
in~\cite{DeLara-Gossner:2020}, we describe information through a
distribution of beliefs but without making reference to a given prior belief.
This approach (which is also the one in \cite{Artstein-Wets:1993})
will be justified in~\S\ref{Comparison_of_information_structures_and_relative_value_of_information}.

\begin{definition}
  \label{de:information_structure}
  Let $(\Omega,{\cal F},\PP)$ be a probability space, and
 let $\borel{\BELIEF}$ denote the Borel $\sigma$-algebra of the simplex~$\BELIEF$. 
An \emph{information structure} is a random variable
\begin{equation}
  \va{\beliefbis} \colon (\Omega,{\cal F},\PP) \to \Bp{\BELIEF,\bpborel{\BELIEF}}
  \eqfinp
\label{eq:information_structure}
\end{equation}
Thus, \( \va{\beliefbis}=\sequence{\va{\beliefbis}_\nature}{\nature\in\NATURE} \) is a random variable
with values in \( \BELIEF = \defset{ \sequence{\belief_\nature}{\nature\in\NATURE}
  \in \RR_{+}^{\NATURE} }{\sum_{\nature\in\NATURE}\belief_\nature = 1 } \).
We denote by \( \InformationStructures\np{\Omega,\BELIEF} \) the set of all information structures.
\end{definition}
We denote by~$\EE$ the mathematical expectation operator with respect to~$\PP$.
  The prior belief associated with an information structure~$\va{\beliefbis}\in\InformationStructures\np{\Omega,\BELIEF} $
is \( \EE\nc{\va{\beliefbis}}
=\bpsequence{\EE\bc{\va{\beliefbis}_\nature}}{\nature\in\NATURE} \in \BELIEF \),
which is i) well-defined as \( \BELIEF \subset \RR^{\NATURE} \) is compact, hence bounded,
ii) indeed an element of~\( \BELIEF \) as this latter set is convex
\cite[Theorem~10.2.6]{Dudley:2002}.
We also interpret \( \EE\nc{\va{\beliefbis}} \) as a constant information structure:
\( \EE\nc{\va{\beliefbis}} \in\InformationStructures\np{\Omega,\BELIEF} \). 

\begin{definition}
\label{de:VoI}
  Consider a decision-maker with \RegularActionPayoffSet~\( \ACTION \in \REGULARACTIONPAYOFFSET \),
as in Definition~\ref{de:4C_action_set},
  and let $\va{\beliefbis}\in\InformationStructures\np{\Omega,\BELIEF} $ be an information structure,
as in Definition~\ref{de:information_structure}.
  The \emph{value of information}~$\VoI_{\ACTION}\np{\va{\beliefbis}}$
 is the nonnegative real number given by: 
\begin{equation}
  \VoI_{\ACTION}\np{\va{\beliefbis}} 
  =  \EE \bc{\RestrictedSupportFunction{\ACTION}\np{\va{\beliefbis}}}
  -\RestrictedSupportFunction{\ACTION}\bp{\EE\nc{\va{\beliefbis}}}
  \in \RR_+
 \eqfinp
\label{eq:VoI}
\end{equation}
\end{definition}
Thus, the value of information~$\VoI_{\ACTION}\np{\va{\beliefbis}}$ is the
difference between the expected utility of
the {decision-maker with \RegularActionPayoffSet~$\ACTION$} 
who either receives information
according to~$\va{\beliefbis} \in\InformationStructures\np{\Omega,\BELIEF} $,
or whose prior belief is~$\EE\nc{\va{\beliefbis}} \in \BELIEF $.
We will show in the forthcoming Proposition~\ref{pr:relative_VoI} that all terms in~\eqref{eq:VoI} are well defined,
and that the result is indeed a nonnegative real number.

In the classic setting of decision problems under imperfect information,
as in~\S\ref{Classic_decision_problems_under_imperfect_information},
where a {decision problem} (on~$\NATURE$) is given by a nonempty decision set~$\NEUTRAL$ and by
a utility function~$\NEUTRALpayoff \colon \NEUTRAL\times \NATURE\to \RR$, we have that
(where $\ACTION_{\NEUTRALpayoff}$ is given by~\eqref{eq:ACTION_classic},
and using~\eqref{eq:value_function=support_function_classic})
\begin{equation}
  \VoI_{\ACTION_{\NEUTRALpayoff}}\np{\va{\beliefbis}} 
  =  \EE \bc{ \sup_{\neutral\in\NEUTRAL}
    \sum_\nature \va{\beliefbis}_\nature \NEUTRALpayoff(\neutral,\nature) }
  - \sup_{\neutral\in\NEUTRAL}
    \sum_\nature \EE \nc{ \va{\beliefbis}_\nature } \NEUTRALpayoff(\neutral,\nature)
 \eqfinp
\label{eq:VoI_classic}
\end{equation}


\subsection{Comparison of information structures and relative value of information}
\label{Comparison_of_information_structures_and_relative_value_of_information}

The value of information, as defined in~\S\ref{Information_structures_and_value_of_information},
displays an (inelegant) asymmetry in~\eqref{eq:VoI}
between the expectation 
\(  \EE \bc{\RestrictedSupportFunction{\ACTION}\np{\va{\beliefbis}}} \) of a function
and the function \( \RestrictedSupportFunction{\ACTION}\bp{\EE\nc{\va{\beliefbis}}} \)
of an expectation.
In fact, the last term may be interpreted like the first one,
but as the expectation 
\( \EE \Bc{\RestrictedSupportFunction{\ACTION}\bp{\EE\nc{\va{\beliefbis}}}} \) of a function
taken at the constant random variable~\( \EE\nc{\va{\beliefbis}} \). 
The forthcoming Definition~\ref{de:relative_VoI} of the {relative value of information}
does not display such asymmetry.

 \begin{definition}
   \label{de:comparison_information_structures}
  Let $(\Omega,{\cal F},\PP)$ be a probability space
  and let \( \va{\beliefbis\upper}, \va{\beliefbis\llower} \in \InformationStructures\np{\Omega,\BELIEF} \)
  be two information structures.
  We say that \( \va{\beliefbis\llower} \) is a \emph{garbling} of~\( \va{\beliefbis\upper} \)
  --- and we denote \( \va{\beliefbis\llower}\moinsfine\va{\beliefbis\upper} \) --- 
  or that \( \va{\beliefbis\upper} \) is a \emph{refinement} of~\( \va{\beliefbis\llower} \)
    --- and we denote \( \va{\beliefbis\upper}\plusfine\va{\beliefbis\llower} \) --- 
  if there exists a $\sigma$-algebra \( {\cal G}\subset{\cal F} \) such that
  \begin{equation}
    \va{\beliefbis\llower} = \espe\conditionaly{\va{\beliefbis\upper}}{{\cal G}} \quad
    \PP\textrm{-almost~surely.}
\label{eq:comparison_information_structures}
\end{equation}
\end{definition}
By taking the trivial $\sigma$-algebra \( {\cal G}=\na{\emptyset,\Omega}\subset{\cal F} \),
the prior belief \( \EE\nc{\va{\beliefbis}} \in \BELIEF \)
associated with an information structure~$\va{\beliefbis}\in\InformationStructures\np{\Omega,\BELIEF} $
is a garbling of~\( \va{\beliefbis} \) --- that is,
\( \EE\nc{\va{\beliefbis}}\moinsfine\va{\beliefbis} \) ---
or, equivalently,
\( \va{\beliefbis} \) is a {refinement} of~\( \EE\nc{\va{\beliefbis}}\) --- that
is, \( \va{\beliefbis} \plusfine \EE\nc{\va{\beliefbis}}\).
More generally, garbling corresponds to less information, whereas refinement corresponds to more information.

\begin{definition}
  \label{de:relative_VoI}
    Consider a decision-maker with \RegularActionPayoffSet~\( \ACTION \in \REGULARACTIONPAYOFFSET \),
as in Definition~\ref{de:4C_action_set},
  and let \( \va{\beliefbis\upper}, \va{\beliefbis\llower} \in \InformationStructures\np{\Omega,\BELIEF} \)
 be two information structures such that 
  \( \va{\beliefbis\upper}\plusfine\va{\beliefbis\llower} \).
  
The \emph{relative value of information}~$\VoI_{\ACTION}\np{\va{\beliefbis\upper}\slash\va{\beliefbis\llower}}$ is the
difference between the expected utility of
the {decision-maker with \RegularActionPayoffSet~$\ACTION$} 
who receives information according 
either to~$\va{\beliefbis\upper}$ or to~$\va{\beliefbis\llower}$.
More precisely, it is the nonnegative real number given by: 
\begin{equation}
  \VoI_{\ACTION}\np{\va{\beliefbis\upper}\slash\va{\beliefbis\llower}}
  =  \EE \bc{\RestrictedSupportFunction{\ACTION}\np{\va{\beliefbis\upper}}}-
  \EE \bc{\RestrictedSupportFunction{\ACTION}\np{\va{\beliefbis\llower}}}
  \in \RR_{+}
 \eqfinp
\label{eq:relative_VoI}
\end{equation}
\end{definition}

\begin{proposition}
\label{pr:relative_VoI}
    Consider a decision-maker with \RegularActionPayoffSet~\( \ACTION \in \REGULARACTIONPAYOFFSET \),
    as in Definition~\ref{de:4C_action_set},
    All terms in~\eqref{eq:relative_VoI} (hence in~\eqref{eq:VoI}) are well defined,
and the {relative value of information}~$\VoI_{\ACTION}\np{\va{\beliefbis\upper}\slash\va{\beliefbis\llower}}$
in~\eqref{eq:relative_VoI} (hence the value of information~$\VoI_{\ACTION}\np{\va{\beliefbis}}$ in~\eqref{eq:VoI})
is a nonnegative real number.
Moreover, we have that 
\begin{subequations}
  \begin{align}
    \VoI_{\ACTION}\np{\va{\beliefbis}}
        &=
          \VoI_{\ACTION}\np{\va{\beliefbis}\slash \EE\nc{\va{\beliefbis}}}
          \eqsepv \forall \va{\beliefbis}\in\InformationStructures\np{\Omega,\BELIEF} 
          \eqfinv
\label{eq:relative_VoI_bis_a}
\\
  \VoI_{\ACTION}\np{\va{\beliefbis\upper}\slash\va{\beliefbis\llower}}          
    &=
      \VoI_{\ACTION}\np{\va{\beliefbis\upper}}
      - \VoI_{\ACTION}\np{\va{\beliefbis\llower}}
      \eqsepv \forall \va{\beliefbis\upper}\plusfine\va{\beliefbis\llower}
      \in \InformationStructures\np{\Omega,\BELIEF} 
      \eqfinp
      \label{eq:relative_VoI_bis_b}
  \end{align}
\label{eq:relative_VoI_bis}
\end{subequations}

\end{proposition}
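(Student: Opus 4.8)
The plan is to rest everything on the single structural fact that, since $\ACTION \in \REGULARACTIONPAYOFFSET$, the restricted support function $\RestrictedSupportFunction{\ACTION}$ belongs to $\RegularConv{\BELIEF}$, hence is a convex, continuous and bounded function on the compact convex simplex $\BELIEF$. First I would settle well-definedness. Being continuous, $\RestrictedSupportFunction{\ACTION}$ is Borel measurable on $\Bp{\BELIEF,\bpborel{\BELIEF}}$, so for any information structure $\va{\beliefbis}$ the composition $\RestrictedSupportFunction{\ACTION}\np{\va{\beliefbis}}$ is a real random variable; being bounded on $\BELIEF$, this random variable is bounded, so $\EE\bc{\RestrictedSupportFunction{\ACTION}\np{\va{\beliefbis}}}$ is a well-defined real number. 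This covers both terms on the right-hand side of~\eqref{eq:relative_VoI}, and also the term $\RestrictedSupportFunction{\ACTION}\bp{\EE\nc{\va{\beliefbis}}}$ in~\eqref{eq:VoI}, the latter because $\EE\nc{\va{\beliefbis}} \in \BELIEF$ lies in the domain of $\RestrictedSupportFunction{\ACTION}$, as recalled after Definition~\ref{de:information_structure}.

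Next comes nonnegativity of the relative value of information, which is the crux of the argument. By Definition~\ref{de:comparison_information_structures}, $\va{\beliefbis\upper}\plusfine\va{\beliefbis\llower}$ provides a $\sigma$-algebra ${\cal G}\subset{\cal F}$ with $\va{\beliefbis\llower} = \espe\conditionaly{\va{\beliefbis\upper}}{{\cal G}}$ almost surely. Since $\RestrictedSupportFunction{\ACTION}$ is convex on the convex set $\BELIEF$, and both $\va{\beliefbis\upper}$ and $\va{\beliefbis\llower}$ take values in $\BELIEF$, the conditional Jensen inequality yields
\[
\RestrictedSupportFunction{\ACTION}\bp{\espe\conditionaly{\va{\beliefbis\upper}}{{\cal G}}}
\leq \espe\conditionaly{\RestrictedSupportFunction{\ACTION}\np{\va{\beliefbis\upper}}}{{\cal G}}
\]
almost surely; taking expectations and using the tower property gives $\EE\bc{\RestrictedSupportFunction{\ACTION}\np{\va{\beliefbis\llower}}} \leq \EE\bc{\RestrictedSupportFunction{\ACTION}\np{\va{\beliefbis\upper}}}$, which is exactly $\VoI_{\ACTION}\np{\va{\beliefbis\upper}\slash\va{\beliefbis\llower}} \geq 0$. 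The main (and only) delicate point is verifying the hypotheses of conditional Jensen: integrability of both sides, which holds because $\RestrictedSupportFunction{\ACTION}$ is bounded, and the requirement that the conditional expectation remain inside $\BELIEF$, which holds because $\va{\beliefbis\llower}$ is by assumption a $\BELIEF$-valued information structure. This is precisely where the assumption $\ACTION \in \REGULARACTIONPAYOFFSET$ does its work.

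For identity~\eqref{eq:relative_VoI_bis_a}, I would note that $\EE\nc{\va{\beliefbis}}$ is a constant element of $\BELIEF$, so $\RestrictedSupportFunction{\ACTION}\bp{\EE\nc{\va{\beliefbis}}}$ is a constant and equals its own expectation. Specializing~\eqref{eq:relative_VoI} to $\va{\beliefbis\upper}=\va{\beliefbis}$ and $\va{\beliefbis\llower}=\EE\nc{\va{\beliefbis}}$ — legitimate since $\va{\beliefbis}\plusfine\EE\nc{\va{\beliefbis}}$, as observed after Definition~\ref{de:comparison_information_structures} — turns the right-hand side into $\EE\bc{\RestrictedSupportFunction{\ACTION}\np{\va{\beliefbis}}} - \RestrictedSupportFunction{\ACTION}\bp{\EE\nc{\va{\beliefbis}}}$, which is the defining expression~\eqref{eq:VoI} of $\VoI_{\ACTION}\np{\va{\beliefbis}}$.

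Finally, for~\eqref{eq:relative_VoI_bis_b}, the key remark is that garbling preserves the prior belief: from $\va{\beliefbis\llower} = \espe\conditionaly{\va{\beliefbis\upper}}{{\cal G}}$ and the tower property one gets $\EE\nc{\va{\beliefbis\llower}} = \EE\nc{\va{\beliefbis\upper}}$. Expanding $\VoI_{\ACTION}\np{\va{\beliefbis\upper}}$ and $\VoI_{\ACTION}\np{\va{\beliefbis\llower}}$ through their definition~\eqref{eq:VoI} and subtracting, the two prior-belief terms $\RestrictedSupportFunction{\ACTION}\bp{\EE\nc{\va{\beliefbis\upper}}}$ and $\RestrictedSupportFunction{\ACTION}\bp{\EE\nc{\va{\beliefbis\llower}}}$ are equal and cancel, leaving $\EE\bc{\RestrictedSupportFunction{\ACTION}\np{\va{\beliefbis\upper}}} - \EE\bc{\RestrictedSupportFunction{\ACTION}\np{\va{\beliefbis\llower}}}$, which is $\VoI_{\ACTION}\np{\va{\beliefbis\upper}\slash\va{\beliefbis\llower}}$ by~\eqref{eq:relative_VoI}. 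No step beyond conditional Jensen and the tower property is needed, so I expect no genuine obstacle once the integrability and domain conditions of the second paragraph are checked.
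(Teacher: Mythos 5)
Your proposal is correct and follows essentially the same route as the paper's own proof: well-definedness via continuity and boundedness of the restricted support function, nonnegativity via conditional Jensen applied to that convex function, and the two identities by specialization and by the observation that garbling preserves the prior belief. Your explicit spelling-out of the tower-property step and of the hypotheses of conditional Jensen is slightly more detailed than the paper, which delegates these to references, but the argument is the same.
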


\begin{proof}
We show that the {relative value of information}~$\VoI_{\ACTION}\np{\va{\beliefbis\upper}\slash\va{\beliefbis\llower}}$
in~\eqref{eq:relative_VoI} is well defined, by showing that each term in the difference is a well-defined real number
(neither \( -\infty \) nor \( +\infty \)).
For this purpose, let $\va{\beliefbis}\in\InformationStructures\np{\Omega,\BELIEF} $ be an information structure.
We are going to show that
\( \EE \bc{\RestrictedSupportFunction{\ACTION}\np{\va{\beliefbis}}} \)
is a well-defined real number.
On the one hand, the function~\( \RestrictedSupportFunction{\ACTION} \) is continuous bounded convex
as \( \ACTION \in \REGULARACTIONPAYOFFSET \) (see Definition~\ref{de:4C_action_set}).
Thus, \( \RestrictedSupportFunction{\ACTION}\np{\va{\beliefbis}} \) is the
composition of the continuous (hence, measurable) function~$\RestrictedSupportFunction{\ACTION}$
with the random variable~$\va{\beliefbis}$, hence is a random variable.
On the other hand, the random variable~\( \RestrictedSupportFunction{\ACTION}\np{\va{\beliefbis}}
\) takes value in the bounded set~$\RestrictedSupportFunction{\ACTION}(\BELIEF)$.
As a consequence, \( \EE \bc{\RestrictedSupportFunction{\ACTION}\np{\va{\beliefbis}}} \)
is a well-defined real number, and so are 
the {relative value of information}~$\VoI_{\ACTION}\np{\va{\beliefbis\upper}\slash\va{\beliefbis\llower}}$
in~\eqref{eq:relative_VoI} and the value of information~$\VoI_{\ACTION}\np{\va{\beliefbis}}$ in~\eqref{eq:VoI}.

That the {relative value of information}~$\VoI_{\ACTION}\np{\va{\beliefbis\upper}\slash\va{\beliefbis\llower}}$
(hence the value of information~$\VoI_{\ACTION}\np{\va{\beliefbis}}$ in~\eqref{eq:VoI})
in~\eqref{eq:relative_VoI} is nonnegative follows from Jensen inequality,
but with conditional expectations, applied to the convex function 
\( \RestrictedSupportFunction{\ACTION} \)
(we refer the reader to \cite[p.~33]{Doob-1953},
\cite[Chap.~II,41.4]{Dellacherie-Meyer:1975}, 
\cite{Artstein-Wets:1993}, \cite{Artstein1999:gains}, among others).

Equation~\eqref{eq:relative_VoI_bis_a} is Equation~\eqref{eq:relative_VoI} with
\( \va{\beliefbis\upper}= \va{\beliefbis} \) and \( \va{\beliefbis\llower}= \EE\nc{\va{\beliefbis}} \).
Equation~\eqref{eq:relative_VoI_bis_b} follows from Equations~\eqref{eq:relative_VoI}
and~\eqref{eq:VoI}. Indeed, from~\eqref{eq:comparison_information_structures}, we get that
\( \va{\beliefbis\upper}\plusfine\va{\beliefbis\llower} \implies
\EE\bc{\va{\beliefbis\upper}} = \EE\bc{\va{\beliefbis\llower}} \).
\end{proof}

\subsection{Characterization of more valuable information}
\label{Characterization_of_more_valuable_information}

First, we define what is ``more valuable information'' by comparing two decision-makers.

\begin{definition}
  Condider two decision-makers with \RegularActionPayoffSet s \(\MORE \in \REGULARACTIONPAYOFFSET \)
  and \(\LESS \in \REGULARACTIONPAYOFFSET \). 
We say that the decision-maker~$\MORE$ \emph{values information (weakly) more} than the
decision-maker~$\LESS$ if
\begin{subequations}
  \begin{equation}
  \VoI_{\MORE}\np{\va{\beliefbis}} \geq \VoI_{\LESS}\np{\va{\beliefbis}} 
\eqsepv \forall \va{\beliefbis}\in\InformationStructures\np{\Omega,\BELIEF}
 \eqfinp
\label{eq:VoIMore}
\end{equation}
We say that the decision-maker~$\MORE$ \emph{values information strongly more} than the
    decision-maker~$\LESS$ if
\begin{equation}
  \forall \va{\beliefbis\upper}, \va{\beliefbis\llower} \in \InformationStructures\np{\Omega,\BELIEF}
  \eqsepv \Bp{
  \va{\beliefbis\upper}\plusfine\va{\beliefbis\llower} \implies
  \VoI_{\MORE}\np{\va{\beliefbis\upper}\slash\va{\beliefbis\llower}}
  \geq
    \VoI_{\LESS}\np{\va{\beliefbis\upper}\slash\va{\beliefbis\llower}} }
 \eqfinp
\label{eq:VoIMore_strong}
\end{equation}  
\end{subequations}
\label{de:VoIMore}
\end{definition}

By taking \( \va{\beliefbis\llower} = \EE\nc{\va{\beliefbis\upper}} \)
in~\eqref{eq:VoIMore_strong}
(see Equation~\eqref{eq:relative_VoI_bis_a}), we obtain that
the definition of ``valuing information strongly more'' implies
the definition of ``valuing information more'' --- that we call
\emph{valuing information weakly more}, when needed, to ease the comparison.
\medskip 

Second, we characterize ``more valuable information''.
Our main result is the following.
\begin{theorem}
\label{th:more_valuable_information}
Consider two decision-makers with \RegularActionPayoffSet s
  \(\MORE \in \REGULARACTIONPAYOFFSET \) and \(\LESS \in \REGULARACTIONPAYOFFSET \). 
  Then, the following statements are equivalent.
  \begin{enumerate}
  \item
    \label{it:values_more_information_strongly}
    The decision-maker~$\MORE$ values information strongly more than the
    decision-maker~$\LESS$.
      \item
    \label{it:values_more_information}
    The decision-maker~$\MORE$ values information (weakly) more than the
    decision-maker~$\LESS$.
  \item
       \label{it:difference_is_convex}
       The function \( \RestrictedSupportFunction{\MORE}-\RestrictedSupportFunction{\LESS} \) is convex
       (on~$\BELIEF$).
     \item
             \label{it:fusion}
             The decision-maker~$\MORE$ is obtained by adding \ActionPayoff s to the decision-maker~$\LESS$,
             more precisely there exists a~\RegularActionPayoffSet~\( \MEDIUM \in \REGULARACTIONPAYOFFSET \) such that
\begin{subequations}
    \begin{equation}
      \MORE = {\LESS +\MEDIUM} 
      \eqfinp 
    \end{equation}
  \item
    \label{it:fusion_star-difference}
    The {star-difference} in~\eqref{eq:stardifference} satisfies
    \(   \MORE\stardifference\LESS \in \REGULARACTIONPAYOFFSET \)
    (hence \( \MORE\stardifference\LESS \neq \emptyset \)), and 
\begin{equation}
  \MORE={\LESS +\np{\MORE\stardifference\LESS}}
  \eqfinp 
\end{equation}
 \end{subequations}
  \end{enumerate}
\end{theorem}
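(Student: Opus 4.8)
The plan is to prove the cycle $\ref{it:values_more_information_strongly}\Rightarrow\ref{it:values_more_information}\Rightarrow\ref{it:difference_is_convex}\Rightarrow\ref{it:fusion}\Rightarrow\ref{it:values_more_information_strongly}$, which identifies the first four statements, and then to close the argument with the separate equivalence $\ref{it:fusion}\Leftrightarrow\ref{it:fusion_star-difference}$. The implication $\ref{it:values_more_information_strongly}\Rightarrow\ref{it:values_more_information}$ is already recorded right after Definition~\ref{de:VoIMore}: taking $\va{\beliefbis\llower}=\EE\nc{\va{\beliefbis\upper}}$ in~\eqref{eq:VoIMore_strong} and using~\eqref{eq:relative_VoI_bis_a} gives exactly~\eqref{eq:VoIMore}, so nothing new is needed there.

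For $\ref{it:values_more_information}\Rightarrow\ref{it:difference_is_convex}$, I would set $f=\RestrictedSupportFunction{\MORE}-\RestrictedSupportFunction{\LESS}$, which is real-valued since $\RestrictedSupportFunction{\MORE},\RestrictedSupportFunction{\LESS}\in\RegularConv{\BELIEF}$. Expanding~\eqref{eq:VoIMore} through the definition~\eqref{eq:VoI} and rearranging turns it into the Jensen-type inequality $\EE\bc{f\np{\va{\beliefbis}}}\ge f\bp{\EE\nc{\va{\beliefbis}}}$, valid for every information structure~$\va{\beliefbis}$. Specializing $\va{\beliefbis}$ to the two-valued information structures taking the value $\belief_{1}\in\BELIEF$ with probability $\lambda$ and the value $\belief_{2}\in\BELIEF$ with probability $1-\lambda$ reduces this to $\lambda f\np{\belief_{1}}+\np{1-\lambda}f\np{\belief_{2}}\ge f\bp{\lambda\belief_{1}+\np{1-\lambda}\belief_{2}}$, i.e. the convexity of~$f$. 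The one caveat to flag is that the probability space must be rich enough (e.g.\ atomless) to carry these two-point distributions.

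For $\ref{it:difference_is_convex}\Rightarrow\ref{it:fusion}$, the function $f=\RestrictedSupportFunction{\MORE}-\RestrictedSupportFunction{\LESS}$ is continuous and bounded as a difference of elements of~$\RegularConv{\BELIEF}$, and convex by hypothesis, hence $f\in\RegularConv{\BELIEF}$. The isomorphism of Proposition~\ref{pr:isomorphism_continuous} between $\REGULARACTIONPAYOFFSET$ and $\RegularConv{\BELIEF}$, namely $\MEDIUM\mapsto\RestrictedSupportFunction{\MEDIUM}$, then produces a unique $\MEDIUM\in\REGULARACTIONPAYOFFSET$ with $\RestrictedSupportFunction{\MEDIUM}=f$; the additivity $\RestrictedSupportFunction{\LESS+\MEDIUM}=\RestrictedSupportFunction{\LESS}+\RestrictedSupportFunction{\MEDIUM}$ coming from~\eqref{eq:support_function_Minkowski} gives $\RestrictedSupportFunction{\LESS+\MEDIUM}=\RestrictedSupportFunction{\MORE}$, whence $\MORE=\LESS+\MEDIUM$ by injectivity. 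For the closing implication $\ref{it:fusion}\Rightarrow\ref{it:values_more_information_strongly}$, substituting $\RestrictedSupportFunction{\MORE}=\RestrictedSupportFunction{\LESS}+\RestrictedSupportFunction{\MEDIUM}$ into~\eqref{eq:relative_VoI} splits the relative value of information as $\VoI_{\MORE}\np{\va{\beliefbis\upper}\slash\va{\beliefbis\llower}}=\VoI_{\LESS}\np{\va{\beliefbis\upper}\slash\va{\beliefbis\llower}}+\VoI_{\MEDIUM}\np{\va{\beliefbis\upper}\slash\va{\beliefbis\llower}}$, whose last summand is nonnegative by Proposition~\ref{pr:relative_VoI}.

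It remains to treat $\ref{it:fusion_star-difference}$. The implication $\ref{it:fusion_star-difference}\Rightarrow\ref{it:fusion}$ is immediate with $\MEDIUM=\MORE\stardifference\LESS$, so the work lies in $\ref{it:fusion}\Rightarrow\ref{it:fusion_star-difference}$: assuming $\MORE=\LESS+\MEDIUM$, I would show $\MORE\stardifference\LESS=\MEDIUM$. Since $\LESS$ and $\MORE$ are closed convex, the condition $\primal+\LESS\subset\MORE$ defining the star-difference~\eqref{eq:stardifference} is equivalent to $\scalpro{\signed}{\primal}+\SupportFunction{\LESS}\np{\signed}\le\SupportFunction{\MORE}\np{\signed}$ for all $\signed$; substituting $\SupportFunction{\MORE}=\SupportFunction{\LESS}+\SupportFunction{\MEDIUM}$ this reads $\scalpro{\signed}{\primal}\le\SupportFunction{\MEDIUM}\np{\signed}$ for all $\signed$, i.e.\ $\primal\in\MEDIUM$ because $\MEDIUM$ is closed convex, so $\MORE\stardifference\LESS=\MEDIUM\in\REGULARACTIONPAYOFFSET$ and $\MORE=\LESS+\np{\MORE\stardifference\LESS}$. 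I expect this last step to be the main obstacle: because $\LESS$, $\MORE$ and $\MEDIUM$ are comprehensive and unbounded, the support functions $\SupportFunction{\LESS}$, $\SupportFunction{\MORE}$, $\SupportFunction{\MEDIUM}$ take the value $+\infty$ off $\RR_{+}^{\NATURE}$, so both the passage to the pointwise inequality and the cancellation $\SupportFunction{\MORE}-\SupportFunction{\LESS}=\SupportFunction{\MEDIUM}$ must be justified to avoid an indeterminate $\np{+\infty}-\np{+\infty}$; the resolution is that all three functions are finite exactly on $\RR_{+}^{\NATURE}$, where the inequality is genuine, and are $+\infty$ together elsewhere, where it is vacuous.
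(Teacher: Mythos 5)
Your proposal is correct, but it closes the logical cycle differently from the paper in two places, and both differences are worth noting. First, to recover Item~\ref{it:values_more_information_strongly}, the paper proves Item~\ref{it:difference_is_convex}~$\Rightarrow$~Item~\ref{it:values_more_information_strongly} by applying the conditional Jensen inequality directly to the convex function \( \RestrictedSupportFunction{\MORE}-\RestrictedSupportFunction{\LESS} \), whereas you prove Item~\ref{it:fusion}~$\Rightarrow$~Item~\ref{it:values_more_information_strongly} by splitting \( \VoI_{\MORE}\np{\va{\beliefbis\upper}\slash\va{\beliefbis\llower}} = \VoI_{\LESS}\np{\va{\beliefbis\upper}\slash\va{\beliefbis\llower}} + \VoI_{\MEDIUM}\np{\va{\beliefbis\upper}\slash\va{\beliefbis\llower}} \) and invoking the nonnegativity already established in Proposition~\ref{pr:relative_VoI}; the two routes rest on the same conditional Jensen inequality, but yours packages it through an already-proven result and is more modular. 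Second, for Item~\ref{it:fusion}~$\Rightarrow$~Item~\ref{it:fusion_star-difference}, the paper only establishes the inclusion \( \MEDIUM \subset \MORE\stardifference\LESS \), deduces \( \MORE = \LESS + \np{\MORE\stardifference\LESS} \) by squeezing, and then runs a separate, fairly long chain of set computations to verify that \( \MORE\stardifference\LESS \) is closed convex comprehensive and continuous. You instead prove the sharper identity \( \MORE\stardifference\LESS = \MEDIUM \) by translating \( \primal+\LESS\subset\MORE \) into the support-function inequality \( \scalpro{\signed}{\primal} \leq \SupportFunction{\MEDIUM}\np{\signed} \); this makes membership of \( \MORE\stardifference\LESS \) in \( \REGULARACTIONPAYOFFSET \) immediate (it equals \( \MEDIUM \)) and shows as a byproduct that the set \( \MEDIUM \) in Item~\ref{it:fusion} is unique. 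Your cancellation of \( \SupportFunction{\LESS} \) is legitimate for exactly the reason you flag: all three support functions are finite precisely on \( \RR_{+}^{\NATURE} \) and simultaneously \( +\infty \) elsewhere. The remaining steps (Item~\ref{it:values_more_information_strongly}~$\Rightarrow$~Item~\ref{it:values_more_information} via the remark after Definition~\ref{de:VoIMore}, the two-point information structures for Item~\ref{it:values_more_information}~$\Rightarrow$~Item~\ref{it:difference_is_convex}, and the isomorphism of Proposition~\ref{pr:isomorphism_continuous} for Item~\ref{it:difference_is_convex}~$\Rightarrow$~Item~\ref{it:fusion}) coincide with the paper's argument, including the implicit richness assumption on the probability space, which the paper shares.
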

The equivalence between Item~\ref{it:values_more_information}
and Item~\ref{it:difference_is_convex} has already been established, in one form or another,
in \cite[Proposition~0]{Jones-Ostroy:1984},
\cite[Theorem~3.1]{whitmeyer2024makinginformationvaluable} (see also
\cite[Footnote~8]{whitmeyer2024makinginformationvaluable} which points to another source).
That  Item~\ref{it:values_more_information} 
implies
Item~\ref{it:values_more_information_strongly} is new.
The equivalence between Item~\ref{it:values_more_information}
and 
Item~\ref{it:fusion} (or Item~\ref{it:fusion_star-difference})
is new:
this is the main result of the paper.

The difficulty is to show that Item~\ref{it:values_more_information} implies Item~\ref{it:fusion}.
Indeed, the reverse implication (Item~\ref{it:fusion} implies Item~\ref{it:values_more_information})
is rather easy (although we have not found trace of this observation in the literature).
We illustrate this latter property --- and also recast the main result of Theorem~\ref{th:more_valuable_information}
--- in the classic setting in the next~\S\ref{Characterization_of_more_valuable_information_in_the_classic_setting}.

\subsection{Characterization of more valuable information in the classic setting}
\label{Characterization_of_more_valuable_information_in_the_classic_setting}

Here, we will we recast the main result of Theorem~\ref{th:more_valuable_information} in the classic setting as follows:
the equivalence between more valuable information, on the one hand,
and multiplying decisions (multiplicative flexibility) and adding utility, on the other hand.


The following Proposition~\ref{pr:more_valuable_information_implies_in_the_classic_setting} is easy to prove
(but, as said above, we have not found it in the literature).

\begin{proposition}
\label{pr:more_valuable_information_implies_in_the_classic_setting}
Consider a decision-maker~$\LESS$ given by a decision set~$\NEUTRAL_{\LESS}$ and by 
a utility function~$\NEUTRALpayoff_{\LESS} \colon \NEUTRAL_{\LESS}\times \NATURE\to \RR$.
Given another decision set~$\NEUTRAL_{\MEDIUM}$ and another
utility function~$\NEUTRALpayoff_{\MEDIUM} \colon \NEUTRAL_{\MEDIUM}\times \NATURE\to \RR$,
we form the new decision-maker~$\MORE$ with 
Cartesian product decision set~$\NEUTRAL_{\MORE}=\NEUTRAL_{\LESS}\times\NEUTRAL_{\MEDIUM}$ and with
additively separable 
utility function~$\NEUTRALpayoff_{\MORE}=\NEUTRALpayoff_{\LESS}+\NEUTRALpayoff_{\MEDIUM} 
\colon \np{\NEUTRAL_{\LESS}\times\NEUTRAL_{\MEDIUM}}\times \NATURE\to \RR$
defined by
\begin{equation}
 \NEUTRALpayoff_{\MORE}\bp{\np{\less,\medium},\nature}
  =\NEUTRALpayoff_{\LESS}(\less,\nature)+\NEUTRALpayoff_{\MEDIUM}(\medium,\nature) \eqsepv
\forall \np{\less,\medium}\in\NEUTRAL_{\LESS}\times\NEUTRAL_{\MEDIUM}
  \eqsepv \forall \nature \in \NATURE 
  \eqfinp
\end{equation}
Suppose that the value functions \( \ValueFunction{\NEUTRALpayoff_{\LESS}} \)
and \( \ValueFunction{\NEUTRALpayoff_{\MEDIUM}} \) in~\eqref{eq:value_function_utility_classic}
take finite values (on~\( \BELIEF \)).
Then the decision-maker~$\MORE$ values information more than the decision-maker~$\LESS$.
\end{proposition}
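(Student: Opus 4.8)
The plan is to show that the value function of the product decision-maker decouples into a sum, that the value of information is therefore additive across the two components, and that the desired inequality then follows from the nonnegativity of the value of information. Throughout, I name by~$\MEDIUM$ the auxiliary decision-maker with decision set~$\NEUTRAL_{\MEDIUM}$ and utility~$\NEUTRALpayoff_{\MEDIUM}$.

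The first and crucial step is to compute the value function~$\ValueFunction{\NEUTRALpayoff_{\MORE}}$ of the decision-maker~$\MORE$. Because the decision set is the Cartesian product $\NEUTRAL_{\LESS}\times\NEUTRAL_{\MEDIUM}$ and the utility is additively separable, for every belief $\belief\in\BELIEF$ the maximand splits into a part depending only on~$\less$ and a part depending only on~$\medium$:
\begin{equation*}
 \sum_{\nature}\belief_{\nature}\NEUTRALpayoff_{\MORE}\bp{\np{\less,\medium},\nature}
 = \sum_{\nature}\belief_{\nature}\NEUTRALpayoff_{\LESS}(\less,\nature)
 + \sum_{\nature}\belief_{\nature}\NEUTRALpayoff_{\MEDIUM}(\medium,\nature)
 \eqfinp
\end{equation*}
Taking the supremum over the product set separates into two independent suprema, giving
\begin{equation*}
 \ValueFunction{\NEUTRALpayoff_{\MORE}}(\belief)
 = \ValueFunction{\NEUTRALpayoff_{\LESS}}(\belief)
 + \ValueFunction{\NEUTRALpayoff_{\MEDIUM}}(\belief)
 \eqsepv \forall \belief\in\BELIEF
 \eqfinp
\end{equation*}
This is the heart of the argument; it is elementary but relies essentially on both the product structure of the decisions and the additive separability of the utility (the decoupling of the supremum being legitimate precisely because the two suprema are finite, which is the standing hypothesis). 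As a by-product, since $\ValueFunction{\NEUTRALpayoff_{\LESS}}$ and $\ValueFunction{\NEUTRALpayoff_{\MEDIUM}}$ are finite on~$\BELIEF$, so is $\ValueFunction{\NEUTRALpayoff_{\MORE}}$; hence all three decision-makers are bona fide abstract decision-makers with associated {\RegularActionPayoffSet}s in~$\REGULARACTIONPAYOFFSET$, by the construction of~\S\ref{Classic_decision_problems_under_imperfect_information}, and by~\eqref{eq:value_function=support_function_classic} their value functions coincide with the restricted support functions $\RestrictedSupportFunction{\MORE}$, $\RestrictedSupportFunction{\LESS}$, $\RestrictedSupportFunction{\MEDIUM}$, so that $\RestrictedSupportFunction{\MORE}=\RestrictedSupportFunction{\LESS}+\RestrictedSupportFunction{\MEDIUM}$.

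Second, I would transport this additive decomposition into the definition~\eqref{eq:VoI} of the value of information. Using linearity of the expectation and of the evaluation at the prior belief $\EE\nc{\va{\beliefbis}}$, the identity $\RestrictedSupportFunction{\MORE}=\RestrictedSupportFunction{\LESS}+\RestrictedSupportFunction{\MEDIUM}$ yields, for every information structure $\va{\beliefbis}\in\InformationStructures\np{\Omega,\BELIEF}$,
\begin{equation*}
 \VoI_{\MORE}\np{\va{\beliefbis}}
 = \VoI_{\LESS}\np{\va{\beliefbis}}
 + \VoI_{\MEDIUM}\np{\va{\beliefbis}}
 \eqfinp
\end{equation*}

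Finally, since the value of information is always nonnegative by Proposition~\ref{pr:relative_VoI}, we have $\VoI_{\MEDIUM}\np{\va{\beliefbis}}\geq 0$, and therefore $\VoI_{\MORE}\np{\va{\beliefbis}}\geq \VoI_{\LESS}\np{\va{\beliefbis}}$ for every $\va{\beliefbis}$, which is exactly the assertion that $\MORE$ values information more than~$\LESS$. I do not expect a genuine obstacle: the only point requiring care is the finiteness of the suprema, which the hypothesis provides and which legitimizes the decoupling. Alternatively, one could bypass the value-of-information computation altogether and invoke the equivalence between Items~\ref{it:values_more_information} and~\ref{it:difference_is_convex} of Theorem~\ref{th:more_valuable_information}, observing that $\RestrictedSupportFunction{\MORE}-\RestrictedSupportFunction{\LESS}=\RestrictedSupportFunction{\MEDIUM}$ is convex on~$\BELIEF$.
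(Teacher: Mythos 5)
Your proof is correct and follows essentially the same route as the paper: you split the supremum over the Cartesian product into the sum of the two component suprema (legitimate by the finiteness hypothesis), deduce that $\VoI_{\MORE}=\VoI_{\LESS}+\VoI_{\MEDIUM}$, and conclude from the nonnegativity of $\VoI_{\MEDIUM}$. The paper carries out exactly this computation directly inside the expression~\eqref{eq:VoI_classic} for the value of information, so your intermediate step isolating the identity $\ValueFunction{\NEUTRALpayoff_{\MORE}}=\ValueFunction{\NEUTRALpayoff_{\LESS}}+\ValueFunction{\NEUTRALpayoff_{\MEDIUM}}$ is only a cosmetic reorganization of the same argument.
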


\begin{proof}
By~\eqref{eq:VoI_classic}, we get that (all the suprema in the proof below are finite by assumption)
\begin{subequations}
\begin{align*}
  \VoI_{\MORE}\np{\va{\beliefbis}} 
  &=
    \EE \bc{ \sup_{\np{\less,\medium}\in\NEUTRAL_{\LESS}\times\NEUTRAL_{\MEDIUM}}
    \sum_\nature \va{\beliefbis}_\nature \bp{\NEUTRALpayoff_{\LESS}(\less,\nature)+\NEUTRALpayoff_{\MEDIUM}(\medium,\nature)} }
  \\
  &-
    \sup_{\np{\less,\medium}\in\NEUTRAL_{\LESS}\times\NEUTRAL_{\MEDIUM}}
  \sum_\nature \EE \nc{ \va{\beliefbis}_\nature }
\bp{\NEUTRALpayoff_{\LESS}(\less,\nature)+\NEUTRALpayoff_{\MEDIUM}(\medium,\nature)} 
\nonumber  \\
  &=
    \EE \bc{ \sup_{\less\in\NEUTRAL_{\LESS}}
    \sum_\nature \va{\beliefbis}_\nature \NEUTRALpayoff_{\LESS}(\less,\nature) 
+ \sup_{\medium\in\NEUTRAL_{\MEDIUM}}
\sum_\nature \va{\beliefbis}_\nature \NEUTRALpayoff_{\MEDIUM}(\medium,\nature) }
\nonumber  \\
  &-
    \EE \bc{ \sup_{\less\in\NEUTRAL_{\LESS}}
    \sum_\nature \EE \nc{ \va{\beliefbis}_\nature } \NEUTRALpayoff_{\LESS}(\less,\nature) 
+ \sup_{\medium\in\NEUTRAL_{\MEDIUM}}
\sum_\nature \EE \nc{ \va{\beliefbis}_\nature } \NEUTRALpayoff_{\MEDIUM}(\medium,\nature) }
\nonumber  \\
  &=
    \VoI_{\LESS}\np{\va{\beliefbis}} + \underbrace{\VoI_{\MEDIUM}\np{\va{\beliefbis}}}_{\geq 0}
    \geq \VoI_{\LESS}\np{\va{\beliefbis}}
    \eqfinp 
\end{align*}
\end{subequations}
This ends the proof.
\end{proof}

The new decision-maker~$\MORE$ has a decision set which is obtained from 
the decision set~$\NEUTRAL_{\LESS}$ of the decision-maker~$\LESS$ by the 
Cartesian product~$\NEUTRAL_{\MORE}=\NEUTRAL_{\LESS}\times\NEUTRAL_{\MEDIUM}$.
We coin this property of multiplying decisions as \emph{multiplicative flexibility}.
This is indeed flexibility as the decision set~$\NEUTRAL_{\LESS}$ has been embedded in
$\NEUTRAL_{\LESS}\times\NEUTRAL_{\MEDIUM}$: by fixing any arbitrary decision in~$\NEUTRAL_{\MEDIUM}$,
the decision-maker~$\LESS$ maintains his original decisions in~$\NEUTRAL_{\LESS}$.
This is \emph{multiplicative} flexibility because the embedding is in the 
Cartesian product~$\NEUTRAL_{\LESS}\times\NEUTRAL_{\MEDIUM}$, hence decisions are multiplied.

Regarding the utility function of the new decision-maker~$\MORE$, it is obtained
from the utility function of the decision-maker~$\LESS$ by utility addition,
giving the additively separable utility function~$\NEUTRALpayoff_{\MORE}=\NEUTRALpayoff_{\LESS}+\NEUTRALpayoff_{\MEDIUM}$. 
\medskip

  
Thus, Proposition~\ref{pr:more_valuable_information_implies_in_the_classic_setting} shows that
multiplying decisions and adding utilities are sufficient to make information valuable.
The reverse statement takes the following form.

\begin{proposition}
\label{pr:more_valuable_information_isimplied_in_the_classic_setting}
Consider a decision-maker~$\LESS$ given by a decision set~$\NEUTRAL_{\LESS}$ and by 
a utility function~$\NEUTRALpayoff_{\LESS} \colon \NEUTRAL_{\LESS}\times \NATURE\to \RR$,
and another decision-maker~$\MORE$ with decision set~$\NEUTRAL_{\MORE}$ and
utility function~$\NEUTRALpayoff_{\MORE}$.
Suppose that the value functions \( \ValueFunction{\NEUTRALpayoff_{\LESS}} \)
and \( \ValueFunction{\NEUTRALpayoff_{\MORE}} \) in~\eqref{eq:value_function_utility_classic}
take finite values (on~\( \BELIEF \)).

Then, if the decision-maker~$\MORE$ values information more than the decision-maker~$\LESS$,
there exists \( \MEDIUM \in \REGULARACTIONPAYOFFSET \)
such that the decision makers~$\LESS$ and~$\MORE$ can be equivalently replaced by 
\begin{itemize}
\item
the abstract decision makers
\( \ACTION_{\NEUTRALpayoff_{\LESS}} \in \REGULARACTIONPAYOFFSET \) in~\eqref{eq:ACTION_classic}
and \( \ACTION_{\NEUTRALpayoff_{\LESS}}+\MEDIUM \in \REGULARACTIONPAYOFFSET \), respectively,
\item
  the classic decision makers with 
  decision set~\( \ACTION_{\NEUTRALpayoff_{\LESS}}\)
  and Cartesian product decision set~\( \ACTION_{\NEUTRALpayoff_{\LESS}}\times \MEDIUM \), respectively,
and utility functions
\( \ACTION_{\NEUTRALpayoff_{\LESS}}\times\NATURE \ni \np{\action,\nature} \mapsto \action_{\nature} \in\RR \)
and additively separable utility function
\( \np{\ACTION_{\NEUTRALpayoff_{\LESS}}\times\MEDIUM} \times\NATURE
\ni   \bp{\np{\less,\medium},\nature} \mapsto \less_{\nature}+\medium_{\nature} \in\RR \)
as in~\eqref{eq:Cartesian_product_decision_set}, respectively.  
\end{itemize}
\end{proposition}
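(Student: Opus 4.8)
The plan is to reduce the claim to the abstract Theorem~\ref{th:more_valuable_information}, whose hard implication (Item~\ref{it:values_more_information} $\Rightarrow$ Item~\ref{it:fusion}) does all the real work, and then to translate the resulting Minkowski decomposition back into the classic language of a Cartesian product decision set with additively separable utility. Throughout, ``equivalently replaced'' is to be understood as preservation of the value function, hence of the value of information for every information structure, rather than as literal equality of the original decision sets.

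First I would pass from the two classic decision problems to their abstract counterparts. Since the value functions $\ValueFunction{\NEUTRALpayoff_{\LESS}}$ and $\ValueFunction{\NEUTRALpayoff_{\MORE}}$ are assumed finite on~$\BELIEF$, the construction in~\eqref{eq:ACTION_classic} produces two \RegularActionPayoffSet s $\ACTION_{\NEUTRALpayoff_{\LESS}}, \ACTION_{\NEUTRALpayoff_{\MORE}} \in \REGULARACTIONPAYOFFSET$, and by~\eqref{eq:value_function=support_function_classic_bis} their restricted support functions coincide with the respective value functions, namely $\ValueFunction{\NEUTRALpayoff_{\LESS}} = \RestrictedSupportFunction{\ACTION_{\NEUTRALpayoff_{\LESS}}}$ and $\ValueFunction{\NEUTRALpayoff_{\MORE}} = \RestrictedSupportFunction{\ACTION_{\NEUTRALpayoff_{\MORE}}}$. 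Comparing the classic formula~\eqref{eq:VoI_classic} term by term with the abstract Definition~\eqref{eq:VoI}, these two identities give $\VoI_{\ACTION_{\NEUTRALpayoff_{\LESS}}}\np{\va{\beliefbis}} = \VoI_{\LESS}\np{\va{\beliefbis}}$ and $\VoI_{\ACTION_{\NEUTRALpayoff_{\MORE}}}\np{\va{\beliefbis}} = \VoI_{\MORE}\np{\va{\beliefbis}}$ for every information structure~$\va{\beliefbis}\in\InformationStructures\np{\Omega,\BELIEF}$. Hence the hypothesis that $\MORE$ values information more than $\LESS$ in the classic sense is exactly the statement that the abstract decision maker $\ACTION_{\NEUTRALpayoff_{\MORE}}$ values information weakly more than $\ACTION_{\NEUTRALpayoff_{\LESS}}$, that is, Item~\ref{it:values_more_information} of Theorem~\ref{th:more_valuable_information} holds with the roles of~$\MORE$ and~$\LESS$ played by $\ACTION_{\NEUTRALpayoff_{\MORE}}$ and $\ACTION_{\NEUTRALpayoff_{\LESS}}$.

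Next I would invoke Theorem~\ref{th:more_valuable_information} to pass from Item~\ref{it:values_more_information} to Item~\ref{it:fusion}: there exists $\MEDIUM \in \REGULARACTIONPAYOFFSET$ with $\ACTION_{\NEUTRALpayoff_{\MORE}} = \ACTION_{\NEUTRALpayoff_{\LESS}} + \MEDIUM$. This $\MEDIUM$ is precisely the \RegularActionPayoffSet\ claimed in the statement. Since $\ACTION_{\NEUTRALpayoff_{\LESS}} + \MEDIUM = \ACTION_{\NEUTRALpayoff_{\MORE}}$ shares the same restricted support function, hence the same value function and the same value of information as the original decision maker~$\MORE$, the latter may indeed be equivalently replaced by the abstract decision maker $\ACTION_{\NEUTRALpayoff_{\LESS}} + \MEDIUM$, and $\LESS$ by $\ACTION_{\NEUTRALpayoff_{\LESS}}$; this establishes the first bullet.

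Finally I would read off the classic reformulation from~\S\ref{Abstract_decision_problems_and_value_function}. The canonical association~\eqref{eq:from_abstract_to_classic_decision_problem} turns $\ACTION_{\NEUTRALpayoff_{\LESS}}$ into the classic decision maker with decision set $\ACTION_{\NEUTRALpayoff_{\LESS}}$ and utility $\np{\action,\nature}\mapsto \action_{\nature}$, while the Cartesian product construction~\eqref{eq:Cartesian_product_decision_set}, applied to the sum $\ACTION_{\NEUTRALpayoff_{\LESS}} + \MEDIUM$, turns it into the classic decision maker with decision set $\ACTION_{\NEUTRALpayoff_{\LESS}} \times \MEDIUM$ and additively separable utility $\bp{\np{\less,\medium},\nature}\mapsto \less_{\nature}+\medium_{\nature}$, which is exactly the second bullet. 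I do not expect a genuine obstacle: each step is either a direct citation of an earlier identity or an instance of the already-established abstract theorem. The only point requiring real care is the bookkeeping of the second step, where one must match the classic value of information of~\eqref{eq:VoI_classic} with the abstract one of~\eqref{eq:VoI} so that the classic comparison hypothesis is faithfully recast as the abstract Item~\ref{it:values_more_information}; once that translation is secured, the heavy lifting is entirely carried by Theorem~\ref{th:more_valuable_information}.
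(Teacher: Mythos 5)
Your proposal is correct and follows exactly the route the paper intends: the paper explicitly omits the proof, stating that the result is a recasting of the implication from Item~\ref{it:values_more_information} to Item~\ref{it:fusion} in Theorem~\ref{th:more_valuable_information}, using the classic-to-abstract translation of \S\ref{Classic_decision_problems_under_imperfect_information} and \S\ref{Abstract_decision_problems_and_value_function} and the Cartesian product construction~\eqref{eq:Cartesian_product_decision_set} — which is precisely your argument. Your added care in matching~\eqref{eq:VoI_classic} with~\eqref{eq:VoI} so that the classic comparison hypothesis becomes the abstract Item~\ref{it:values_more_information} is the right bookkeeping step and fills in what the paper leaves implicit.
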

By ``equivalently replaced'', we mean that the value functions are unchanged:
\( \ValueFunction{\NEUTRALpayoff_{\LESS}} = \RestrictedSupportFunction{\ACTION_{\NEUTRALpayoff_{\LESS}}}\)
and \( \ValueFunction{\NEUTRALpayoff_{\MORE}} = \RestrictedSupportFunction{\ACTION_{\NEUTRALpayoff_{\LESS}}\times \MEDIUM} \).
Thus, in a sense, Proposition~\ref{pr:more_valuable_information_isimplied_in_the_classic_setting}
shows that multiplying decisions and adding utilities are necessary to make information valuable.

We do not give a proof as Proposition~\ref{pr:more_valuable_information_isimplied_in_the_classic_setting}
results from a recasting, in the classic setting, of the statement that
Item~\ref{it:values_more_information} implies Item~\ref{it:fusion} in Theorem~\ref{th:more_valuable_information},
where the moves between classic and abstract setting have been detailed in~\S\ref{Classic_decision_problems_under_imperfect_information} and in~\S\ref{Abstract_decision_problems_and_value_function}.

\section{Flexibility and more valuable information}
\label{Flexibility_and_more_valuable_information}

In~\S\ref{Characterization_of_more_valuable_information_in_the_classic_setting},
we have recast the main result of Theorem~\ref{th:more_valuable_information}
by stressing the role of flexibility:
the equivalence between more valuable information, on the one hand,
and multiplying decisions (multiplicative flexibility) and adding utility, on the other hand.
We develop this approach in this Sect.~\ref{Flexibility_and_more_valuable_information}.

In~\S\ref{The_dioids_of_ExpectedUtilityMaximizer_s},
we present dioids of \ExpectedUtilityMaximizer s, provide economic interpretations,
define union and fusion of decision-makers, and finally
deduce two kinds of flexibility, by union and by fusion.
In~\S\ref{More_on_flexibility_and_more_valuable_information},
we discuss when flexibility by union can lead to more valuable information.
In~\S\ref{Relation_with_the_literature}, 
we relate our results to those in three recent papers
\cite{whitmeyer2024makinginformationvaluable},
\cite{Denti:2022} and \cite{Yoder:2022}.

\subsection{Dioids of \ExpectedUtilityMaximizer s}
\label{The_dioids_of_ExpectedUtilityMaximizer_s}

In~\S\ref{Homomorphic_dioids_of_ExpectedUtilityMaximizer_s},
we provide background on dioids and then 
we equip the sets \( \RegularConv{\BELIEF} \) of continuous convex functions over beliefs
and the set \( \REGULARACTIONPAYOFFSET \) of \RegularActionPayoffSet s
with two operations leading to homomorphic dioid structures.
In~\S\ref{Economic_interpretation_of_the_dioids_of_ExpectedUtilityMaximizer_s},
we provide economic interpretations of these two dioids.
In~\S\ref{Union_and_fusion_of_decision-makers}, we name and define the two operations of the dioids
as union and fusion of decision-makers.
In~\S\ref{Two_kinds_of_flexibility:_union_and_fusion},
we deduce two kinds of flexibility: by union and by fusion.

\subsubsection{Homomorphic dioids of \ExpectedUtilityMaximizer s}
\label{Homomorphic_dioids_of_ExpectedUtilityMaximizer_s}

\subsubsubsection{Background on dioids}

The definition of a \emph{dioid} is given in \cite[Definition~4.1]{Baccelli-Cohen-Olsder-Quadrat:1992}. 
It is a set endowed with two operations~$\oplus$ (``sum'') and~$\otimes$ (``product''),
making it an algebra which is commutative (for sum~$\oplus$),
associative (for both operations~$\oplus$ and~$\otimes$),
and distributive (of $\otimes$ \wrt\ $\oplus$, where \wrt\ stands for ``with respect to''), 
with the additional properties that
there is a neutral (or zero) element~\( \epsilon \) for~$\oplus$ which is an absorbing element for~$\otimes$,
there is a unit element~\( e \) for~$\otimes$ and, characteristically,
the operation~$\oplus$ is idempotent
(\( a\oplus a = a \), for all element~$a$ of the dioid).
A dioid is commutative if the product~$\otimes$ is commutative.
A homomorphism between two dioids is a mapping that sends sums to sums, products to products,
neutral element to neutral element, and unit element to unit element
\cite[Definition~4.20]{Baccelli-Cohen-Olsder-Quadrat:1992}. 
%

\subsubsubsection{Inverse homomorphisms between dioids}

We equip the sets \( \RegularConv{\BELIEF} \) of continuous convex functions over beliefs
and the set \( \REGULARACTIONPAYOFFSET \) of \RegularActionPayoffSet s
with two operations leading to homomorphic dioid structures.
%

\begin{proposition}
  \label{pr:isomorphism_continuous}
  \quad
  \begin{enumerate}
  \item
      \label{it:isomorphism_continuous_functions}
  Endowed with the two operations\footnote{%
In~\eqref{eq:two_operations_functions_otimes_continuous}, we have as special cases
    \( (-\infty) \otimes (-\infty) = (-\infty) + (-\infty) =-\infty \)
    and, if \( \fonctionun \in \RegularConv{\BELIEF} \),     
    \( \fonctionun \otimes (-\infty) = \fonctionun + (-\infty) =-\infty \)
    because \( \fonctionun \in \RegularConv{\BELIEF} \) is bounded, hence never takes the value~$+\infty$.}
\begin{subequations}
  \begin{align}
\fonctionun \oplus \fonctiondeux 
    &=
  \sup\na{\fonctionun,\fonctiondeux}
  \eqsepv \forall \fonctionun, \fonctiondeux \in \RegularConv{\BELIEF} \cup \na{-\infty}
      \eqfinv
  \label{eq:two_operations_functions_oplus_continuous}
    \\
    \fonctionun \otimes \fonctiondeux
    &=
      \fonctionun + \fonctiondeux
  \eqsepv \forall \fonctionun, \fonctiondeux \in \RegularConv{\BELIEF} \cup \na{-\infty}
      \eqfinv
        \label{eq:two_operations_functions_otimes_continuous}
  \end{align}
  \label{eq:two_operations_functions_continuous}
\end{subequations}
\( \bp{ \RegularConv{\BELIEF} \cup \na{-\infty}, \oplus, \otimes } \) is a dioid,
with neutral (or zero) element~\( -\infty \) and unit element~\( 0 \).  

\item
       \label{it:isomorphism_continuous_subsets}
Endowed with the two operations
  \begin{subequations}
    \begin{align}
 \ACTION \oplus \ACTIONbis 
      &=
\closedconvexhull\np{\ACTION\cup\ACTIONbis}
  \eqsepv \forall \ACTION, \ACTIONbis \in \REGULARACTIONPAYOFFSET \cup \na{\emptyset}
        \eqfinv
        \label{eq:two_operations_subsets_oplus_continuous}
      \\
      \ACTION \otimes \ACTIONbis
      &=
\ACTION + \ACTIONbis 
  \eqsepv \forall \ACTION, \ACTIONbis \in \REGULARACTIONPAYOFFSET \cup \na{\emptyset}
  \eqfinv
\label{eq:two_operations_subsets_otimes_continuous}     
    \end{align}
\label{eq:two_operations_subsets_continuous}
\end{subequations}
\( \np{ \REGULARACTIONPAYOFFSET \cup \na{\emptyset}, \oplus, \otimes } \) is a dioid,
with neutral (or zero) element~\( \emptyset \) and unit element~\( \RR_{-}^{\NATURE} \).

\item
         \label{it:isomorphism_continuous_isomorphism}
         Denoting\footnotemark
\footnotetext{Equation~\eqref{eq:isomorphism_b_continuous_def} has the equivalent formulation
 \( \InverseRestrictedSupportFunction{\fonctionun}=
 \defset{\primal\in \RR^{\NATURE}}{ \LFM{\fonctionun}\np{\primal} \leq 0 } \),
 where \( \LFM{\fonctionun} \) denotes the Fenchel conjugate of the function~\( \fonctionun \) (extended with the value \( +\infty \)
 outside of~\( \BELIEF \)), as in~\eqref{eq:LFM_beliefs}.
\label{ft:isomorphism_b_continuous_def}}
  \begin{equation}
    \InverseRestrictedSupportFunction{\fonctionun}=
      \defset{\primal\in \RR^{\NATURE}}{ \scalpro{\belief}{\primal}
        \leq \fonctionun\np{\belief} \eqsepv \forall \belief\in\BELIEF }
      \eqfinv 
      \label{eq:isomorphism_b_continuous_def}
  \end{equation}
the following mappings, from sets to functions
 \begin{subequations}
  \begin{align}
    \FromACTIONPAYOFFSETtoConvBELIEF \colon 
    \REGULARACTIONPAYOFFSET \cup \na{\emptyset} \to \RegularConv{\BELIEF} \cup \na{-\infty} \eqsepv
    &
      \ACTION \mapsto \RestrictedSupportFunction{\ACTION}
        \label{eq:isomorphism_a_continuous}
    \intertext{and from functions to sets}
        \FromConvBELIEFtoACTIONPAYOFFSET \colon 
    \RegularConv{\BELIEF} \cup \na{-\infty} \to \REGULARACTIONPAYOFFSET \cup \na{\emptyset} \eqsepv
    &
      \fonctionun \mapsto \InverseRestrictedSupportFunction{\fonctionun}
      \label{eq:isomorphism_b_continuous}
  \end{align}
  \label{eq:isomorphism_continuous}
\end{subequations}
are homormorphisms inverse one to the other by 
\begin{subequations}
  \begin{align}
    \np{\FromConvBELIEFtoACTIONPAYOFFSET \circ \FromACTIONPAYOFFSETtoConvBELIEF}\np{\ACTION}
    =\InverseRestrictedSupportFunction{\RestrictedSupportFunction{\ACTION}}
    &=
      \ACTION
    \eqsepv \forall \ACTION\in\REGULARACTIONPAYOFFSET \cup \na{\emptyset} 
    \eqfinv
   \label{eq:one-to-one_isomorphisms_a_continuous} 
    \\
\np{\FromACTIONPAYOFFSETtoConvBELIEF \circ \FromConvBELIEFtoACTIONPAYOFFSET}\np{\fonctionun}
=    \RestrictedSupportFunction{%
    \InverseRestrictedSupportFunction{\fonctionun}}
    &=
      \fonctionun \eqsepv
      \forall \fonctionun\in\RegularConv{\BELIEF} \cup \na{-\infty}
      \eqfinp 
   \label{eq:one-to-one_isomorphisms_b_continuous}
  \end{align}
   \label{eq:one-to-one_isomorphisms_continuous}
 \end{subequations}
  \end{enumerate}
\end{proposition}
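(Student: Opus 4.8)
The plan is to establish Item~\ref{it:isomorphism_continuous_functions} by a routine pointwise check, to concentrate the real work on Item~\ref{it:isomorphism_continuous_isomorphism} (that the two maps are mutually inverse homomorphisms), and then to read off Item~\ref{it:isomorphism_continuous_subsets} by transporting the dioid structure of the function side across the bijection~$\FromACTIONPAYOFFSETtoConvBELIEF$. For Item~\ref{it:isomorphism_continuous_functions}, I would first note that $\RegularConv{\BELIEF}$ is stable under both operations, since the pointwise maximum and the pointwise sum of two continuous bounded convex functions on~$\BELIEF$ are again of this kind; the dioid axioms then reduce to the elementary pointwise identities $\sup\na{a,a}=a$, commutativity and associativity of $\max$ and $+$, and the distributivity $a+\max\na{b,c}=\max\na{a+b,a+c}$ for reals $a,b,c$, while $-\infty$ is neutral for $\oplus$ and absorbing for $\otimes$ (by the conventions of the footnote to~\eqref{eq:two_operations_functions_otimes_continuous}) and the constant~$0$ is the unit for~$\otimes$.

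For Item~\ref{it:isomorphism_continuous_isomorphism} I first check that the maps land where claimed: $\FromACTIONPAYOFFSETtoConvBELIEF$ does by Definition~\ref{de:4C_action_set} together with $\RestrictedSupportFunction{\emptyset}=-\infty$, and for $\fonctionun\in\RegularConv{\BELIEF}$ the set $\InverseRestrictedSupportFunction{\fonctionun}$ in~\eqref{eq:isomorphism_b_continuous_def} is an intersection of closed half-spaces, hence closed convex, and is comprehensive because every normal $\belief\in\BELIEF$ lies in~$\RR_{+}^{\NATURE}$, so subtracting a vector of $\RR_{+}^{\NATURE}$ only relaxes the constraints. To prove $\FromConvBELIEFtoACTIONPAYOFFSET\circ\FromACTIONPAYOFFSETtoConvBELIEF=\mathrm{id}$, that is $\InverseRestrictedSupportFunction{\RestrictedSupportFunction{\ACTION}}=\ACTION$, the inclusion $\ACTION\subset\InverseRestrictedSupportFunction{\RestrictedSupportFunction{\ACTION}}$ is immediate from the definition of the support function. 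The reverse inclusion is the bipolar theorem combined with comprehensiveness: since $\ACTION$ contains $\RR_{-}^{\NATURE}+\primal$ for each of its points, $\SupportFunction{\ACTION}\np{\signed}=+\infty$ as soon as $\signed\notin\RR_{+}^{\NATURE}$, so the supporting half-spaces of the closed convex set~$\ACTION$ all have normals in~$\RR_{+}^{\NATURE}$; by positive homogeneity of $\SupportFunction{\ACTION}$ each such normal rescales into~$\BELIEF$, so describing~$\ACTION$ as the intersection of its supporting half-spaces uses only normals in~$\BELIEF$, which is precisely $\InverseRestrictedSupportFunction{\RestrictedSupportFunction{\ACTION}}$.

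The converse identity $\FromACTIONPAYOFFSETtoConvBELIEF\circ\FromConvBELIEFtoACTIONPAYOFFSET=\mathrm{id}$ is the more delicate half. Fix $\fonctionun\in\RegularConv{\BELIEF}$; taking $\belief=\beliefbis$ in~\eqref{eq:isomorphism_b_continuous_def} gives $\RestrictedSupportFunction{\InverseRestrictedSupportFunction{\fonctionun}}\np{\beliefbis}\leq\fonctionun\np{\beliefbis}$ for every $\beliefbis$. For the reverse inequality at a $\beliefbis$ in the relative interior of~$\BELIEF$, I use a subgradient: there is $s\in\RR^{\NATURE}$ with $\fonctionun\np{\belief}\geq\fonctionun\np{\beliefbis}+\scalpro{\belief-\beliefbis}{s}$ on~$\BELIEF$. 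The decisive point is that on the simplex $\scalpro{\belief}{\mathbf 1}=\sum_{\nature}\belief_{\nature}=1$, so the constant $c=\fonctionun\np{\beliefbis}-\scalpro{\beliefbis}{s}$ can be absorbed into the linear part, giving $\fonctionun\np{\beliefbis}+\scalpro{\belief-\beliefbis}{s}=\scalpro{\belief}{\primal}$ with $\primal=s+c\mathbf 1$, where $\mathbf 1=\sequence{1}{\nature\in\NATURE}$. Then $\primal\in\InverseRestrictedSupportFunction{\fonctionun}$ and $\scalpro{\beliefbis}{\primal}=\fonctionun\np{\beliefbis}$, so equality holds on the relative interior. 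Since $\fonctionun$ is continuous and $\RestrictedSupportFunction{\InverseRestrictedSupportFunction{\fonctionun}}$ is convex and bounded (below by the value of the constant act $\primal_{0}=\bp{\min_{\BELIEF}\fonctionun-1}\mathbf 1\in\InverseRestrictedSupportFunction{\fonctionun}$, above by $\fonctionun$), hence continuous by~\cite{Gale-Klee-Rockafellar:1968}, the equality extends to all of~$\BELIEF$ by density. This also shows $\RestrictedSupportFunction{\InverseRestrictedSupportFunction{\fonctionun}}=\fonctionun\in\RegularConv{\BELIEF}$, so $\InverseRestrictedSupportFunction{\fonctionun}\in\REGULARACTIONPAYOFFSET$. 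The homomorphism property is then read off: $\FromACTIONPAYOFFSETtoConvBELIEF$ sends $\emptyset\mapsto-\infty$ and $\RR_{-}^{\NATURE}\mapsto 0$ (the latter recalled after~\eqref{eq:support_function_restriction}), sends $\oplus$ to $\oplus$ by $\RestrictedSupportFunction{\closedconvexhull\np{\ACTION\cup\ACTIONbis}}=\sup\na{\RestrictedSupportFunction{\ACTION},\RestrictedSupportFunction{\ACTIONbis}}$ (see~\eqref{eq:support_function_closedconvexhull}), and sends $\otimes$ to $\otimes$ by $\RestrictedSupportFunction{\ACTION+\ACTIONbis}=\RestrictedSupportFunction{\ACTION}+\RestrictedSupportFunction{\ACTIONbis}$ (see~\eqref{eq:support_function_Minkowski}). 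Being a bijection respecting the operations and the distinguished elements, $\FromACTIONPAYOFFSETtoConvBELIEF$ transports the dioid of Item~\ref{it:isomorphism_continuous_functions} onto $\np{\REGULARACTIONPAYOFFSET\cup\na{\emptyset},\oplus,\otimes}$, which is Item~\ref{it:isomorphism_continuous_subsets}.

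I expect the genuine obstacle to be hidden in the well-definedness of the set operations, namely in checking that $\ACTION\oplus\ACTIONbis$ and $\ACTION\otimes\ACTIONbis$ really belong to $\REGULARACTIONPAYOFFSET\cup\na{\emptyset}$ rather than to some larger class. For $\oplus$ this is benign: $\closedconvexhull$ forces closedness and convexity, comprehensiveness is inherited, and continuity follows from the $\sup$ formula above. For $\otimes$ the subtlety is that a Minkowski sum of two closed sets need not be closed; here I would argue that the recession cones of $\ACTION$ and $\ACTIONbis$ are both exactly~$\RR_{-}^{\NATURE}$ --- a consequence of $\RestrictedSupportFunction{\ACTION}$ and $\RestrictedSupportFunction{\ACTIONbis}$ being finite on~$\BELIEF$, which forces the domain of $\SupportFunction{\ACTION}$ (its barrier cone) to be $\RR_{+}^{\NATURE}$, whose polar is the recession cone $\RR_{-}^{\NATURE}$ --- and then invoke the standard closedness criterion for sums of closed convex sets, which applies since $\RR_{-}^{\NATURE}\cap\RR_{+}^{\NATURE}=\na{0}$ leaves no pair of opposite recession directions. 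With closedness secured, $\RestrictedSupportFunction{\ACTION+\ACTIONbis}=\RestrictedSupportFunction{\ACTION}+\RestrictedSupportFunction{\ACTIONbis}\in\RegularConv{\BELIEF}$ yields continuity, completing $\ACTION+\ACTIONbis\in\REGULARACTIONPAYOFFSET$ and legitimating the dioid of Item~\ref{it:isomorphism_continuous_subsets}.
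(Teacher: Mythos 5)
Your proposal is correct, and it reaches the result by a genuinely different route from the paper. The paper factors everything through two pieces of machinery: the positively homogeneous extension \( \fonctionun \mapsto \widehat{\fonctionun} \) of Proposition~\ref{pr:positively_homogeneous_extension} together with the representation theorem for sublinear \lsc\ functions as support functions, and the general (not necessarily continuous) dioid isomorphism of Proposition~\ref{pr:isomorphism}, from which Items~\ref{it:isomorphism_continuous_subsets} and~\ref{it:isomorphism_continuous_isomorphism} are then read off. You instead work directly on the simplex: for \( \RestrictedSupportFunction{\InverseRestrictedSupportFunction{\fonctionun}}=\fonctionun \) you take a subgradient at a relative interior point and absorb the affine constant into the linear part via \( \scalpro{\belief}{\mathbf 1}=1 \), which produces an exact linear minorant and hence equality on \( \ri\BELIEF \), then pass to the relative boundary using the Gale--Klee--Rockafellar upper semicontinuity of real-valued convex functions on polytopes (combined with the automatic lower semicontinuity of a restricted support function, which you leave implicit); and for \( \InverseRestrictedSupportFunction{\RestrictedSupportFunction{\ACTION}}=\ACTION \) you use comprehensiveness to confine the supporting normals to \( \RR_{+}^{\NATURE} \) and rescale them into \( \BELIEF \), which is the same idea as the paper's but without introducing \( \widehat{\fonctionun} \). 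For the one genuinely delicate point of Item~\ref{it:isomorphism_continuous_subsets} --- closedness of \( \ACTION+\ACTIONbis \) --- the paper runs an explicit sequential compactness argument using Dirac masses to bound the summands, whereas you identify both recession cones as \( \RR_{-}^{\NATURE} \) (polar of the common barrier cone \( \RR_{+}^{\NATURE} \)) and invoke the standard closedness criterion for Minkowski sums; both are valid, yours being shorter and more standard. Finally, both you and the paper obtain the dioid axioms on the set side by transporting them across the bijection rather than verifying associativity and distributivity of the set operations directly. What the paper's route buys is reusability: Propositions~\ref{pr:positively_homogeneous_extension} and~\ref{pr:isomorphism} cover the non-continuous case \( \Conv{\BELIEF}\leftrightarrow\ACTIONPAYOFFSET \) needed elsewhere in the text; what your route buys is a shorter, self-contained argument specialized to the continuous bounded case.
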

We call \( \bp{ \REGULARACTIONPAYOFFSET \cup \na{\emptyset}, \oplus, \otimes } \) 
the \emph{dioid of \RegularActionPayoffSet s (on~$\NATURE$)}.
By isomorphism, we coin \emph{\RegularValueFunction} any continuous convex function over beliefs,
that is, any function in~\( \RegularConv{\BELIEF} \).
Then, we call \( \bp{ \RegularConv{\BELIEF} \cup \na{-\infty}, \oplus, \otimes } \) 
 the \emph{dioid of \RegularValueFunction s (over beliefs)}.

\subsubsubsection{Illustration of the isomomorphism between dioids}
As an illustration of Proposition~\ref{pr:isomorphism_continuous},
with the negentropy \( -\Entropy \in \RegularConv{\BELIEF} \) 
in~\eqref{eq:negentropy}, we get from~\eqref{eq:isomorphism_a_continuous} that\footnote{%
  We use that the Fenchel conjugate~\eqref{eq:LFM_beliefs} of the negentropy \( -\Entropy \) (extended with the value \( +\infty \)
  outside of~\( \BELIEF \)) is the so-called \emph{LogSumExp} function defined by
  \( \textrm{LogSumExp}\np{\primal} = \log\bp{ \sum_{\nature\in\NATURE} e^{\primal_\nature} } \),
  for any \( \primal\in \RR^{\NATURE} \).}
\begin{subequations}
  \begin{align}
    \FromConvBELIEFtoACTIONPAYOFFSET\np{-\Entropy}
    &=
  \defset{\primal\in \RR^{\NATURE}}{ \sum_{\nature\in\NATURE} e^{\primal_\nature} \leq 1 }
      \eqfinv
    \intertext{hence, by~\eqref{eq:one-to-one_isomorphisms_b_continuous}, that} 
    -\Entropy\np{\belief}
    &=
      \sup \defset{\scalpro{\belief}{\primal}}{ \primal\in \RR^{\NATURE} \text{ such that }
      \sum_{\nature\in\NATURE} e^{\primal_\nature} \leq 1 }
  \eqsepv \forall \belief\in\BELIEF
      \eqfinp
  \label{eq:negentropy=value_function}       
  \end{align}
\end{subequations}
Equation~\eqref{eq:negentropy=value_function} leads to two interpretations of
the negentropy~\( -\Entropy\) as the value function, in the classic decision setting, 
\begin{itemize}
\item
  either of a DM with decision set~\( \FromConvBELIEFtoACTIONPAYOFFSET\np{-\Entropy} \)
  and utility function \(  \defset{\primal\in \RR^{\NATURE}}{ \sum_{\nature\in\NATURE} e^{\primal_\nature} \leq 1 }
  \times\NATURE \ni \np{\primal,\nature} \mapsto \primal_{\nature} \in \RR \),
  as in~\eqref{eq:from_abstract_to_classic_decision_problem},
\item 
 or of a DM who selects a vector \( \sequence{\neutral_{\nature'}}{\nature' \in \NATURE} \) 
(after the change of variables \( \neutral=e^{\primal}\))
--- 
one position \( \neutral_{\nature'} \in \RR_{++}^{\NATURE} \) for each state of nature \( \nature' \in \NATURE \)
(as inspired by \cite{CabralesGossnerSerranoEntropyAER2013}) --- 
and then maximizes the logarithmic utility \( \NEUTRALpayoff \colon \RR_{++}^{\NATURE}\times\NATURE \to\RR \) given by
\( \NEUTRALpayoff\np{\sequence{\neutral_{\nature'}}{\nature' \in \NATURE},\nature} =
\log\neutral_{\nature} \), under the (budget) constraint
\( \sum_{\nature'\in\NATURE} \neutral_{\nature'} \leq 1 \). 
\end{itemize}

\subsubsection{Economic interpretation of the dioids of \ExpectedUtilityMaximizer s}
\label{Economic_interpretation_of_the_dioids_of_ExpectedUtilityMaximizer_s}

We illustrate the economic interpretation of the dioids in~\S\ref{Homomorphic_dioids_of_ExpectedUtilityMaximizer_s}
with the decision problem outlined in the introduction Sect.~\ref{Introduction}:
selecting meals in the menu of an unknown restaurant.

The dioid \( \bp{ \RegularConv{\BELIEF} \cup \na{-\infty}, \oplus, \otimes } \) 
  of \RegularValueFunction s over beliefs
is an abstract representation of the \emph{output}, in term of expected utility,
of a classic decision problem.
\begin{itemize}
\item 
The operation~\( \oplus \) corresponds to taking the best (supremum) of two expected utilities:
to choose between starters \emph{or} desserts, one compares the maximal, over starters, utilities
with the maximal, over desserts, utilities; then, one takes the supremum
 (as one will take the best between starters and desserts, but not both).
\item 
The operation~\( \otimes \) correspond to adding two expected utilities:
to choose \emph{within} starters \emph{and within} desserts, one computes the maximal, over starters, utilities
and the maximal, over desserts, utilities; then, one makes the sum (as one will take both starters and desserts).
\end{itemize}

By contrast, the dioid \( \bp{ \REGULARACTIONPAYOFFSET \cup \na{\emptyset}, \oplus, \otimes } \) 
of \RegularActionPayoffSet s on~$\NATURE$
is an abstract representation of the \emph{input} of a classic decision problem,
as an element of a \RegularActionPayoffSet\ can be identified with the utility~act yielded by a decision
and, ultimately, with a decision (indeed, if two decisions give the same \ActionPayoff, they
can be conflated into a single decision).
Thus, we identify a \RegularActionPayoffSet\ with a decision set.
\begin{itemize}
\item 
The operation~\( \oplus \) corresponds to making a union of decision sets:
to choose between starters \emph{or} desserts, one makes a list made of all starters and all desserts;
then one selects the best in the (linear) list.
\item 
The operation~\( \otimes \) correspond to multiplying decisions:
to choose \emph{within} starters \emph{and within} desserts,
one makes a list with all pairs (starter, dessert);
then one selects the best in the (product) list.
 Decisions in \( \ACTION\otimes\ACTIONbis = \ACTION+\ACTIONbis \) are obtained as images of the mapping
  \( \np{\action,\actionbis} \ni \ACTION \times \ACTIONbis \mapsto
  \action + \actionbis \in \ACTION + \ACTIONbis \).
  This is why we say that decisions are multiplied even if the image actions,
  obtained by sum, can be absorbed by the sum (see also Equation~\eqref{eq:Cartesian_product_decision_set}).
\end{itemize}

 \subsubsection{Union and fusion of decision-makers}
\label{Union_and_fusion_of_decision-makers}

Following the economic interpretation of the dioids of \ExpectedUtilityMaximizer s
in~\S\ref{Economic_interpretation_of_the_dioids_of_ExpectedUtilityMaximizer_s},
we now name the two operations of the dioids.

\subsubsubsection{Definition of union and fusion of decision-makers}

\begin{definition}
  \label{de:fusion_of_two_decision-makers}
  Consider two decision-makers --- each facing an abstract decision problem 
under imperfect information on a state of nature, belonging to the common
set~$\NATURE$ ---
  one with \RegularActionPayoffSet~\(\ACTION  \in \REGULARACTIONPAYOFFSET \)
(and \RegularValueFunction\ \( \fonctionun =\RestrictedSupportFunction{\ACTION} \in \RegularConv{\BELIEF} \)),
and the other one with
\RegularActionPayoffSet~\(\ACTIONbis  \in \REGULARACTIONPAYOFFSET \)
(and \RegularValueFunction\ \( \fonctiondeux =\RestrictedSupportFunction{\ACTIONbis} \in
\RegularConv{\BELIEF} \)).

\begin{subequations}
\begin{itemize}
\item
The \emph{union} of the two decision-makers is the new decision-maker
with \RegularActionPayoffSet\ \( {\ACTION\oplus\ACTIONbis} \in \REGULARACTIONPAYOFFSET \)
(and \RegularValueFunction\  \( \fonctionun\oplus\fonctiondeux =
\RestrictedSupportFunction{\ACTION\oplus\ACTIONbis} \in \RegularConv{\BELIEF} \))
given by~\eqref{eq:two_operations_subsets_oplus_continuous}.
%
\item
The \emph{fusion} of the two decision-makers is the new decision-maker
with \RegularActionPayoffSet\ \( {\ACTION\otimes\ACTIONbis} \in \REGULARACTIONPAYOFFSET \)
(and \RegularValueFunction\ \( \fonctionun\otimes\fonctiondeux =
\RestrictedSupportFunction{\ACTION\otimes\ACTIONbis} \in \RegularConv{\BELIEF} \)),
given by~\eqref{eq:two_operations_subsets_otimes_continuous}.

%
\end{itemize}
\end{subequations}
\end{definition}

The above definitions make sense as both \( {\ACTION\oplus\ACTIONbis} \) and \( {\ACTION\otimes\ACTIONbis} \) indeed
are~\RegularActionPayoffSet s since \( \bp{ \REGULARACTIONPAYOFFSET \cup \na{\emptyset}, \oplus, \otimes } \) is a dioid
(see Definition~\ref{de:4C_action_set} and Item~\ref{it:isomorphism_continuous_subsets}
in Proposition~\ref{pr:isomorphism_continuous}), hence
\( \ACTION \in \REGULARACTIONPAYOFFSET \) and \( \ACTIONbis \in \REGULARACTIONPAYOFFSET \) imply that
\( {\ACTION\oplus\ACTIONbis}, {\ACTION\otimes\ACTIONbis} \in \REGULARACTIONPAYOFFSET \). 
As discussed in~\S\ref{Economic_interpretation_of_the_dioids_of_ExpectedUtilityMaximizer_s},
with the \RegularActionPayoffSet\ \( {\ACTION\oplus\ACTIONbis} \) (union), one is 
\emph{taking the best of two utilities} which corresponds to
\emph{making a union of two decisions sets};
by contrast, with the \RegularActionPayoffSet\ \( {\ACTION\otimes\ACTIONbis} \) (fusion), one is
\emph{adding utilities} but \emph{multiplying decisions}.

\subsubsubsection{Interpretation of union and fusion in the classic setting}

Suppose that the decision problem of the first economic agent is given by a decision set~$\NEUTRAL$ and by
a utility function~$\NEUTRALpayoff_{\NEUTRAL} \colon \NEUTRAL\times \NATURE\to \RR$,
whereas the decision problem of the second economic agent is given by a decision set~$\NEUTRALbis$ and by
a utility function~$\NEUTRALpayoff_{\NEUTRALbis}\colon \NEUTRALbis\times \NATURE\to \RR$,
with \( \NEUTRAL\cap\NEUTRALbis=\emptyset\). 

\begin{itemize}
\item 
The {union of the two economic agents} is the decision problem given by
the union decision set~$\NEUTRAL\cup\NEUTRALbis$ and by\footnote{%
  The function~\( \1_{\NEUTRAL} \) takes the value~1 on~\( \NEUTRAL \) and~0 elsewhere,
  and the same for~\( \1_{\NEUTRALbis} \).}
the utility function~$\NEUTRALpayoff_{\NEUTRAL}\1_{\NEUTRAL} +\NEUTRALpayoff_{\NEUTRALbis}\1_{\NEUTRALbis}
\colon \np{\NEUTRAL\cup\NEUTRALbis}\times \NATURE\to \RR$,
defined by
\begin{subequations}
\begin{equation}
  \np{\NEUTRALpayoff_{\NEUTRAL}\1_{\NEUTRAL} +\NEUTRALpayoff_{\NEUTRALbis}\1_{\NEUTRALbis}}\np{z,\nature}
  =
  \begin{cases}
    \NEUTRALpayoff_{\NEUTRAL}(\neutral,\nature) & \textrm{if } z=\neutral\in\NEUTRAL
    \\
   \NEUTRALpayoff_{\NEUTRALbis}(\neutralbis,\nature) & \textrm{if } z=\neutralbis\in\NEUTRALbis
  \end{cases}
\eqsepv \forall z \in \NEUTRAL\cup\NEUTRALbis  \eqsepv \forall \nature \in \NATURE 
  \eqfinp
\label{eq:union_in_the_classic_setting}
\end{equation}
The value function in~\eqref{eq:value_function_utility_classic} is given,
for any belief \( \belief \in \BELIEF \), by
\begin{equation}
  \begin{split}
  \sup_{z \in \NEUTRAL\cup\NEUTRALbis}\sum_\nature \belief_\nature 
   \np{\NEUTRALpayoff_{\NEUTRAL}\1_{\NEUTRAL} +\NEUTRALpayoff_{\NEUTRALbis}\1_{\NEUTRALbis}}\np{z,\nature}
 \\  =\sup\Ba{
     \sup_{\neutral\in\NEUTRAL}\sum_\nature \belief_\nature \NEUTRALpayoff_{\NEUTRAL}(\neutral,\nature),
     \sup_{\neutralbis\in\NEUTRALbis}\sum_\nature \belief_\nature \NEUTRALpayoff_{\NEUTRALbis}(\neutralbis,\nature),
}
 =\sup\ba{\ValueFunction{\NEUTRALpayoff_{\NEUTRAL}}(\belief),\ValueFunction{\NEUTRALpayoff_{\NEUTRALbis}}(\belief)}
  \eqfinp
  \end{split}
\end{equation}
\end{subequations}
\item 
The {fusion of the two economic agents} is the decision problem given by
the Cartesian product decision set~$\NEUTRAL\times \NEUTRALbis$ and by  
the utility function~$\NEUTRALpayoff_{\NEUTRAL} +\NEUTRALpayoff_{\NEUTRALbis} \colon \NEUTRAL\times \NEUTRALbis\times \NATURE\to \RR$,
defined by
\begin{subequations}
\begin{equation}
  \np{\NEUTRALpayoff_{\NEUTRAL} + \NEUTRALpayoff_{\NEUTRALbis}}\bp{\np{\neutral,\neutralbis},\nature}
  =\NEUTRALpayoff_{\NEUTRAL}(\neutral,\nature)+\NEUTRALpayoff_{\NEUTRALbis}(\neutralbis,\nature) \eqsepv
  \forall \neutral\in\NEUTRAL \eqsepv \forall \neutralbis\in \NEUTRALbis  \eqsepv \forall \nature \in \NATURE 
  \eqfinp
  \label{eq:fusion_in_the_classic_setting}
\end{equation}
Here again, we see that, by fusion of two decision-makers, we are {multiplying decisions}
and adding utilities.
Notice that, when an individual makes two successive decisions, one after the other,
and that he has time-additive separable preferences, his utility is the sum of two utilities:
hence this individual is the fusion of his two successive economic agents,
but if the state of nature is the same at both stages.

The value function in~\eqref{eq:value_function_utility_classic} is given,
for any belief \( \belief \in \BELIEF \), by
\begin{equation}
  \begin{split}
  \sup_{\np{\neutral,\neutralbis}\in \NEUTRAL\times\NEUTRALbis}\sum_\nature \belief_\nature 
 \np{\NEUTRALpayoff_{\NEUTRAL} + \NEUTRALpayoff_{\NEUTRALbis}}\bp{\np{\neutral,\neutralbis},\nature}
 \\  = \sup_{\neutral\in\NEUTRAL}\sum_\nature \belief_\nature \NEUTRALpayoff_{\NEUTRAL}(\neutral,\nature) + 
     \sup_{\neutralbis\in\NEUTRALbis}\sum_\nature \belief_\nature \NEUTRALpayoff_{\NEUTRALbis}(\neutralbis,\nature)
 =\ValueFunction{\NEUTRALpayoff_{\NEUTRAL}}(\belief)+\ValueFunction{\NEUTRALpayoff_{\NEUTRALbis}}(\belief)
  \eqfinp
  \end{split}
\end{equation}
 \end{subequations}
\end{itemize}

\subsubsection{Two kinds of flexibility: by union and by fusion}
\label{Two_kinds_of_flexibility:_union_and_fusion}

From the definition of union and fusion of decision-makers
in~\S\ref{Union_and_fusion_of_decision-makers}, 
we deduce two kinds of flexibility: by union and by fusion.

\begin{definition}
  \label{de:additively_multiplicatively_more_flexible} 
  Consider two decision-makers with \RegularActionPayoffSet s
  \(\MORE \in \REGULARACTIONPAYOFFSET \) and \(\LESS \in \REGULARACTIONPAYOFFSET \).
  \begin{itemize}
  \item 
  We say that the decision-maker~\(\MORE \) is \emph{\( \oplus \)-more flexible}
(or \emph{more flexible by union})
  than the decision-maker~\(\LESS \) if
  there exists a~\RegularActionPayoffSet~\( \MEDIUM \in \REGULARACTIONPAYOFFSET \) such that
  \( \MORE={\LESS\oplus\MEDIUM} \).
\item 
  We say that the decision-maker~\(\MORE \) is \emph{\( \otimes \)-more flexible}
  (or \emph{more flexible by fusion})
  than the decision-maker~\(\LESS \) if
  there exists a~\RegularActionPayoffSet~\( \MEDIUM \in \REGULARACTIONPAYOFFSET \) such that
 \( \MORE={\LESS\otimes\MEDIUM} \).
  \end{itemize}
\end{definition}

Following the discussion in~\S\ref{Economic_interpretation_of_the_dioids_of_ExpectedUtilityMaximizer_s},
we can interpret \(\oplus\)- and \( \otimes \)- flexibilities as follows.
\begin{itemize}
\item 
The decision-maker~\(\MORE \) is {\( \oplus \)-more flexible} than the decision-maker~\(\LESS \) if 
decisions in~\( \MEDIUM \) are added (by union) to those of~\(\LESS \) to obtain~\(\MORE \),
after closed convex closure (the resulting operation
\( \closedconvexhull\np{\LESS\cup\MEDIUM} = \LESS\oplus\MEDIUM \) is the sum~$\oplus$
in~\eqref{eq:two_operations_subsets_oplus_continuous}).
Thus, the \(\oplus\)-flexibility is obtained by uniting more options and utilities
(see~\eqref{eq:union_in_the_classic_setting} in the classic setting).
The \(\oplus\)-flexibility is the one considered in~\cite{whitmeyer2024makinginformationvaluable}.
\item 
The decision-maker~\(\MORE \) is {\( \otimes \)-more flexible} than the decision-maker~\(\LESS \) if 
there exists a decision-maker with \RegularActionPayoffSet~\( \MEDIUM \in \REGULARACTIONPAYOFFSET \) such that
\(\MORE \) is the fusion of~\(\LESS \) with~\( \MEDIUM \)
(the resulting operation \( {\LESS +\MEDIUM} = \LESS\otimes\MEDIUM \)
 is the product~$\otimes$ in~\eqref{eq:two_operations_subsets_otimes_continuous}).
 Thus, the \(\otimes\)-flexibility is obtained by multiplying options
 and by adding utilities
(see~\eqref{eq:fusion_in_the_classic_setting} in the classic setting): 
the decision-maker~\(\MORE \) selects options in the product set \(\LESS \times \MEDIUM \)
and obtains an action in~\(\LESS \otimes \MEDIUM \), thus adding utilities.
Notice that, even if decisions are multiplied in~\(\LESS \times \MEDIUM \),
some couples of decisions \(\np{\less,\medium}\in\LESS \times \MEDIUM \) may
yield actions \(\less+\medium\in\LESS+\MEDIUM \) that are identical
or lower than actions in~\( \LESS+\MEDIUM \), that is,
there might be less actions in~\( \LESS+\MEDIUM \) than in~\(\LESS \times \MEDIUM \).
\end{itemize}

\subsection{Flexibility by union and more valuable information}
\label{More_on_flexibility_and_more_valuable_information}

The main result of Theorem~\ref{th:more_valuable_information} is that
\(\otimes\)-flexibility (flexibility by fusion)
is a necessary and sufficient condition for more valuable information.

\begin{theorem}
  \label{th:more_valuable_information_bis}
Consider two decision-makers with \RegularActionPayoffSet s
  \(\MORE \in \REGULARACTIONPAYOFFSET \) and \(\LESS \in \REGULARACTIONPAYOFFSET \). 
  Then, the following statements are equivalent.
  \begin{enumerate}
      \item
    \label{it:values_more_information_bis}
    The decision-maker~$\MORE$ values information (weakly) more than the
    decision-maker~$\LESS$.
\item
     \label{it:multiplicatively_more_flexible}
  The decision-maker~$\MORE$ is {\( \otimes \)-more flexible} than the
   decision-maker~$\LESS$.
  \end{enumerate}
\end{theorem}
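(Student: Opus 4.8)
The plan is to observe that Theorem~\ref{th:more_valuable_information_bis} is a direct reformulation of part of Theorem~\ref{th:more_valuable_information}, so that no genuinely new argument is needed beyond unwinding the definition of $\otimes$-flexibility. The whole proof will amount to matching up the statements of the two theorems.

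First I would recall from Definition~\ref{de:additively_multiplicatively_more_flexible} that the decision-maker~$\MORE$ is $\otimes$-more flexible than the decision-maker~$\LESS$ precisely when there exists a \RegularActionPayoffSet~$\MEDIUM \in \REGULARACTIONPAYOFFSET$ with $\MORE = \LESS \otimes \MEDIUM$. By the definition~\eqref{eq:two_operations_subsets_otimes_continuous} of the product~$\otimes$ in the dioid of \RegularActionPayoffSet s, one has $\LESS \otimes \MEDIUM = \LESS + \MEDIUM$ (Minkowski sum). Hence Item~\ref{it:multiplicatively_more_flexible} of the present theorem coincides with Item~\ref{it:fusion} of Theorem~\ref{th:more_valuable_information}, namely the existence of $\MEDIUM \in \REGULARACTIONPAYOFFSET$ such that $\MORE = \LESS + \MEDIUM$. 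Since Item~\ref{it:values_more_information_bis} of the present theorem is word-for-word Item~\ref{it:values_more_information} of Theorem~\ref{th:more_valuable_information}, the desired equivalence is exactly the already-established equivalence between Item~\ref{it:values_more_information} and Item~\ref{it:fusion} there, and the proof concludes by invoking that theorem.

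There is essentially no obstacle at this stage: all the substance---in particular the hard implication from valuing information more to the existence of the summand~$\MEDIUM$---is carried by Theorem~\ref{th:more_valuable_information}, whose proof is given separately. The only point requiring care is the bookkeeping that $\otimes$ really is Minkowski addition on~$\REGULARACTIONPAYOFFSET$, and not the union operation~$\oplus$, so that it is flexibility \emph{by fusion}, and not by union, that matches the additive characterization $\MORE = \LESS + \MEDIUM$; this is immediate from~\eqref{eq:two_operations_subsets_otimes_continuous}. I would therefore present the proof as a short corollary-style deduction rather than a fresh argument.
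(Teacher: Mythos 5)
Your proposal is correct and matches the paper's own treatment: the paper gives no separate proof of Theorem~\ref{th:more_valuable_information_bis}, presenting it precisely as a restatement of the equivalence between Item~\ref{it:values_more_information} and Item~\ref{it:fusion} of Theorem~\ref{th:more_valuable_information}, with $\otimes$-flexibility unwound via Definition~\ref{de:additively_multiplicatively_more_flexible} and~\eqref{eq:two_operations_subsets_otimes_continuous} to the Minkowski-sum condition $\MORE=\LESS+\MEDIUM$. Your bookkeeping point distinguishing $\otimes$ from $\oplus$ is exactly the right thing to check, and nothing further is needed.
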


This seems to settle the question regarding flexibility.
However, flexibilities by union or fusion are not exclusive, so that
we ask the question: what is the room for \(\oplus\)-flexibility to lead to more valuable information?
Given a decision-maker with \RegularActionPayoffSet~\(\LESS \in \REGULARACTIONPAYOFFSET \), 
the answers lies in the possibility, or not, to find two \RegularActionPayoffSet s
\( \ACTION \in \REGULARACTIONPAYOFFSET \) and
\( \MEDIUM \in \REGULARACTIONPAYOFFSET \) that solve the equation
\( \LESS \oplus \ACTION = \LESS \otimes \MEDIUM \).
There are trivial (without economic relevance) solutions like
\( \LESS \oplus \emptyset = \LESS = \LESS \otimes \RR_{-}^{\NATURE} \)
--- as \( \emptyset \) is the neutral (or zero) element and \( \RR_{-}^{\NATURE} \) is the unit element
of the dioid of \RegularActionPayoffSet s 
(see Item~\ref{it:isomorphism_continuous_subsets} in Proposition~\ref{pr:isomorphism_continuous}) ---
or \( \LESS \oplus \LESS = \LESS = \LESS \otimes \RR_{-}^{\NATURE} \) ---
using the idempotency of the $\oplus$~operation.
There are less trivial examples like the one depicted in Figure~\ref{fig:polyhedralActionPayoffSet_oplus_otimes}.

In the coming Propositions~\ref{pr:additive_flexibility_more_valuable_information}
and~\ref{pr:additive_flexibility_more_valuable_information_little},
we are going to provide conditions --- necessary, sufficient, and necessary and sufficient --- 
under which more valuable information can be obtained by \(\oplus\)-flexibility.
Thus doing, we contribute to the program carried  in~\cite{whitmeyer2024makinginformationvaluable}.
We use the notion of normal cone lattice as introduced in Definition~\ref{de:Normal_cone_lattice}.


\begin{proposition}
\label{pr:additive_flexibility_more_valuable_information}  
  Consider two decision-makers with \RegularActionPayoffSet s
  \(\LESS \in \REGULARACTIONPAYOFFSET \) and \( \ACTION \in \REGULARACTIONPAYOFFSET \).
    The union of both decision-makers has \RegularActionPayoffSet~\( \LESS \oplus \ACTION \).
\begin{enumerate}
\item
\label{it:additive_flexibility_more_valuable_information_CNS}  
The decision-maker~\( \LESS \oplus \ACTION \) values more information than the decision-maker~\( \LESS \)
if and only if 
\( \max\na{0, \RestrictedSupportFunction{\ACTION}-\RestrictedSupportFunction{\LESS}} \) is a convex function
(on~\( \BELIEF \)).
\item
  \label{it:additive_flexibility_more_valuable_information_CN}  
If the decision-maker~\( \LESS \oplus \ACTION \) values more information than the decision-maker~\( \LESS \), 
then, on the one hand,\footnote{%
  We use the notation \( \na{ \RestrictedSupportFunction{\ACTION}\leq\RestrictedSupportFunction{\LESS} } 
  =\defset{\belief\in\BELIEF}{ \RestrictedSupportFunction{\ACTION}\np{\belief}
    \leq\RestrictedSupportFunction{\LESS}\np{\belief} } \), and later
  \( \na{ \RestrictedSupportFunction{\LESS} \leq \RestrictedSupportFunction{\ACTION} =\scalpro{\cdot}{\action} } \)
  \( = \defset{\belief\in\BELIEF}{
\RestrictedSupportFunction{\LESS}\np{\belief} \leq
    \RestrictedSupportFunction{\ACTION}\np{\belief}
 =\scalpro{\belief}{\action} } \),  
  \( \na{ \RestrictedSupportFunction{\LESS}=\scalpro{\cdot}{\less} }
  = \defset{\belief\in\BELIEF}{ \RestrictedSupportFunction{\LESS}\np{\belief}
    = \scalpro{\belief}{\less} } \),
  \( \na{ \RestrictedSupportFunction{\ACTION} \leq \scalpro{\cdot}{\less} }\)
\(  =\defset{\belief\in\BELIEF}{ \RestrictedSupportFunction{\ACTION}\np{\belief}
    \leq \scalpro{\belief}{\less} } \),
  \( \na{ \RestrictedSupportFunction{\ACTION} \leq \RestrictedSupportFunction{\LESS} } \)
\( = \defset{\belief\in\BELIEF}{ \RestrictedSupportFunction{\ACTION}\np{\belief}
    \leq\RestrictedSupportFunction{\LESS}\np{\belief} } \), etc.
}
\( \na{ \RestrictedSupportFunction{\ACTION}\leq\RestrictedSupportFunction{\LESS} } \) is a convex subset
of~\( \BELIEF \) and, on the other hand,
the normal cone lattice~$\NORMALCONE(\LESS\oplus\ACTION)$ refines (is included in)
the normal cone lattice~$\NORMALCONE(\LESS)$,
which implies that 
\begin{equation}
  \forall\action\in\ACTION \eqsepv \exists \less\in\LESS \eqsepv
\na{ \RestrictedSupportFunction{\LESS} \leq \RestrictedSupportFunction{\ACTION} =\scalpro{\cdot}{\action} }  
\subset
\na{ \RestrictedSupportFunction{\LESS}=\scalpro{\cdot}{\less} }
  \eqfinp
  \label{eq:additive_flexibility_more_valuable_information_CN}
\end{equation}
\item
  \label{it:additive_flexibility_more_valuable_information_CS}  
 If either there exists \( \ACTIONbis  \in \REGULARACTIONPAYOFFSET \) such that
  \( \ACTION = \LESS \otimes \ACTIONbis \),
  or if 
  there exists \( \less\in\LESS \) such that
  \begin{subequations}
    \begin{equation}
\na{ \scalpro{\cdot}{\less}\leq\RestrictedSupportFunction{\ACTION} }
\subset
\na{ \RestrictedSupportFunction{\LESS}\leq\RestrictedSupportFunction{\ACTION}}
\subset 
\na{ \RestrictedSupportFunction{\LESS}=\scalpro{\cdot}{\less} }
    \eqfinv
    \label{eq:additive_flexibility_more_valuable_information_CS}
  \end{equation}
then the decision-maker~\( \LESS \oplus \ACTION \) values more information than
the decision-maker~\( \LESS \),
and we have that
\begin{align}
\max\na{0, \RestrictedSupportFunction{\ACTION}-\RestrictedSupportFunction{\LESS}}
  &=
    \max\na{0, \RestrictedSupportFunction{\ACTION}-\scalpro{\cdot}{\less} }
    =
\RestrictedSupportFunction{ \RR_{-}^{\NATURE} \oplus \np{\ACTION-\less} }
\eqfinv
    \label{eq:additive_flexibility_more_valuable_information_CS_convex_function}
\\
  \LESS \oplus \ACTION 
  &=
    \LESS \otimes \bp{ \RR_{-}^{\NATURE} \oplus \np{\ACTION-\less} }
  \eqfinp 
\end{align}
\end{subequations}
\end{enumerate}
\end{proposition}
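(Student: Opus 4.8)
The plan is to funnel all five items through the single support-function identity \( \RestrictedSupportFunction{\LESS\oplus\ACTION}=\sup\na{\RestrictedSupportFunction{\LESS},\RestrictedSupportFunction{\ACTION}} \), which holds because, by Item~\ref{it:isomorphism_continuous_isomorphism} of Proposition~\ref{pr:isomorphism_continuous}, the mapping \( \FromACTIONPAYOFFSETtoConvBELIEF \) is a dioid homomorphism and hence sends \( \oplus \) to~\( \sup \). Subtracting \( \RestrictedSupportFunction{\LESS} \) and using the elementary identity \( \sup\na{a,b}-a=\max\na{0,b-a} \) gives \( \RestrictedSupportFunction{\LESS\oplus\ACTION}-\RestrictedSupportFunction{\LESS}=\max\na{0,\RestrictedSupportFunction{\ACTION}-\RestrictedSupportFunction{\LESS}} \). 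Item~\ref{it:additive_flexibility_more_valuable_information_CNS} is then immediate from the equivalence of Items~\ref{it:values_more_information} and~\ref{it:difference_is_convex} of Theorem~\ref{th:more_valuable_information}: the decision-maker \( \LESS\oplus\ACTION \) values information more than \( \LESS \) if and only if \( \RestrictedSupportFunction{\LESS\oplus\ACTION}-\RestrictedSupportFunction{\LESS} \) is convex.

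For the necessary conditions of Item~\ref{it:additive_flexibility_more_valuable_information_CN}, set \( \varphi=\max\na{0,\RestrictedSupportFunction{\ACTION}-\RestrictedSupportFunction{\LESS}} \), convex by Item~\ref{it:additive_flexibility_more_valuable_information_CNS}. Since \( \varphi\geq 0 \), the set \( \na{\RestrictedSupportFunction{\ACTION}\leq\RestrictedSupportFunction{\LESS}}=\na{\varphi\leq 0} \) is a sublevel set of the convex function \( \varphi \), hence convex. The engine for the normal cone lattice claim is a ``convex equals concave'' argument. Fix \( \action\in\ACTION \); because \( \action\in\ACTION \) gives \( \scalpro{\cdot}{\action}\leq\RestrictedSupportFunction{\ACTION} \), the region \( R_{\action}=\na{\RestrictedSupportFunction{\LESS}\leq\RestrictedSupportFunction{\ACTION}=\scalpro{\cdot}{\action}} \) is the intersection of the two convex sublevel sets \( \na{\RestrictedSupportFunction{\ACTION}\leq\scalpro{\cdot}{\action}} \) and \( \na{\RestrictedSupportFunction{\LESS}\leq\scalpro{\cdot}{\action}} \), hence convex. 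On \( R_{\action} \) one has \( \RestrictedSupportFunction{\LESS\oplus\ACTION}=\scalpro{\cdot}{\action} \), so \( \varphi=\scalpro{\cdot}{\action}-\RestrictedSupportFunction{\LESS} \) is simultaneously convex (being \( \varphi \)) and concave (affine minus the convex \( \RestrictedSupportFunction{\LESS} \)), therefore affine on \( R_{\action} \). Thus \( \RestrictedSupportFunction{\LESS} \) is affine on \( R_{\action} \); an affine function on a convex subset of \( \BELIEF \) has the form \( \scalpro{\cdot}{\less} \), and since it coincides there with the supremum \( \RestrictedSupportFunction{\LESS} \) the maximizer \( \less \) lies in the closed set \( \LESS \). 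This is precisely \( R_{\action}\subset\na{\RestrictedSupportFunction{\LESS}=\scalpro{\cdot}{\less}} \), that is~\eqref{eq:additive_flexibility_more_valuable_information_CN}, and reading it cell by cell is the content of the refinement statement for the normal cone lattices of Definition~\ref{de:Normal_cone_lattice}.

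For the sufficient conditions of Item~\ref{it:additive_flexibility_more_valuable_information_CS}, I treat the two hypotheses separately. If \( \ACTION=\LESS\otimes\ACTIONbis \), the homomorphism gives \( \RestrictedSupportFunction{\ACTION}-\RestrictedSupportFunction{\LESS}=\RestrictedSupportFunction{\ACTIONbis} \), which is convex, so \( \max\na{0,\RestrictedSupportFunction{\ACTION}-\RestrictedSupportFunction{\LESS}} \) is a supremum of two convex functions, hence convex, and Item~\ref{it:additive_flexibility_more_valuable_information_CNS} applies. If instead~\eqref{eq:additive_flexibility_more_valuable_information_CS} holds for some \( \less\in\LESS \), I would check region by region that \( \max\na{0,\RestrictedSupportFunction{\ACTION}-\RestrictedSupportFunction{\LESS}}=\max\na{0,\RestrictedSupportFunction{\ACTION}-\scalpro{\cdot}{\less}} \): on \( \na{\RestrictedSupportFunction{\LESS}\leq\RestrictedSupportFunction{\ACTION}} \) the second inclusion forces \( \RestrictedSupportFunction{\LESS}=\scalpro{\cdot}{\less} \), so the two sides agree; on its complement the first inclusion forces \( \scalpro{\cdot}{\less}>\RestrictedSupportFunction{\ACTION} \), so both sides vanish (recall \( \scalpro{\cdot}{\less}\leq\RestrictedSupportFunction{\LESS} \) always). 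By translation covariance of the support function, \( \RestrictedSupportFunction{\ACTION-\less}=\RestrictedSupportFunction{\ACTION}-\scalpro{\cdot}{\less} \), and with \( \RestrictedSupportFunction{\RR_{-}^{\NATURE}}=0 \) the homomorphism identifies \( \max\na{0,\RestrictedSupportFunction{\ACTION}-\scalpro{\cdot}{\less}} \) with \( \RestrictedSupportFunction{\RR_{-}^{\NATURE}\oplus\np{\ACTION-\less}} \); this yields convexity and~\eqref{eq:additive_flexibility_more_valuable_information_CS_convex_function}. The set identity \( \LESS\oplus\ACTION=\LESS\otimes\bp{\RR_{-}^{\NATURE}\oplus\np{\ACTION-\less}} \) then follows by computing that both sides have support function \( \RestrictedSupportFunction{\LESS}+\max\na{0,\RestrictedSupportFunction{\ACTION}-\RestrictedSupportFunction{\LESS}}=\sup\na{\RestrictedSupportFunction{\LESS},\RestrictedSupportFunction{\ACTION}} \), using \( a+\max\na{0,b-a}=\sup\na{a,b} \), and invoking the bijection~\eqref{eq:one-to-one_isomorphisms_a_continuous}.

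The main obstacle is the normal cone lattice refinement in Item~\ref{it:additive_flexibility_more_valuable_information_CN}: converting the pointwise affinity of \( \RestrictedSupportFunction{\LESS} \) on each \( R_{\action} \) into a clean ``one maximizer per cell'' statement requires the lattice formalism of Definition~\ref{de:Normal_cone_lattice}, together with care when \( R_{\action} \) is lower-dimensional or touches the relative boundary of~\( \BELIEF \), where the affine representative \( \less \) need not be unique.
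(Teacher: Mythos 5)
Your treatment of Item~\ref{it:additive_flexibility_more_valuable_information_CNS} and of Item~\ref{it:additive_flexibility_more_valuable_information_CS} is correct and essentially the paper's own argument: the identity \( \RestrictedSupportFunction{\LESS\oplus\ACTION}-\RestrictedSupportFunction{\LESS}=\max\na{0,\RestrictedSupportFunction{\ACTION}-\RestrictedSupportFunction{\LESS}} \) via the homomorphism, the region-by-region verification that the two inclusions in~\eqref{eq:additive_flexibility_more_valuable_information_CS} force \( \max\na{0,\RestrictedSupportFunction{\ACTION}-\RestrictedSupportFunction{\LESS}}=\max\na{0,\RestrictedSupportFunction{\ACTION}-\scalpro{\cdot}{\less}}=\RestrictedSupportFunction{\RR_{-}^{\NATURE}\oplus\np{\ACTION-\less}} \), and the passage back to sets by injectivity of \( \FromACTIONPAYOFFSETtoConvBELIEF \). (For the first sub-case of Item~\ref{it:additive_flexibility_more_valuable_information_CS} the paper instead uses distributivity, \( \LESS\oplus\ACTION=\LESS\otimes\np{\RR_{-}^{\NATURE}\oplus\ACTIONbis} \), and Theorem~\ref{th:more_valuable_information} directly; your route through convexity of \( \max\na{0,\RestrictedSupportFunction{\ACTIONbis}} \) is equally valid.) The convexity of \( \na{\RestrictedSupportFunction{\ACTION}\leq\RestrictedSupportFunction{\LESS}} \) as a sublevel set is also fine.

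The gap is in the second half of Item~\ref{it:additive_flexibility_more_valuable_information_CN}. Your convex-equals-concave argument correctly shows that \( \RestrictedSupportFunction{\LESS} \) is affine on each convex region \( R_{\action}=\na{\RestrictedSupportFunction{\LESS}\leq\RestrictedSupportFunction{\ACTION}=\scalpro{\cdot}{\action}} \), hence of the form \( \scalpro{\cdot}{v} \) there for some \( v\in\RR^{\NATURE} \). But the step ``the maximizer \( \less \) lies in the closed set \( \LESS \)'' is not justified: the supremum defining \( \RestrictedSupportFunction{\LESS} \) need not be attained at beliefs on the relative boundary of~\( \BELIEF \) (for a \RegularActionPayoffSet\ the exposed face in a boundary direction can be empty, e.g.\ \( \LESS=\defset{\np{x,y}\in\RR_{-}^{2}}{y\leq 1/x,\ x<0} \) at a vertex of the simplex), and the affine representative \( v \) is far from unique, so nothing forces a choice of \( v \) inside~\( \LESS \) when \( R_{\action} \) is lower-dimensional or contained in the boundary of~\( \BELIEF \). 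You flag this yourself but do not resolve it. You also assert the full refinement \( \NORMALCONE(\LESS\oplus\ACTION)\subset\NORMALCONE(\LESS) \) while only examining the normal cones at points \( \action\in\ACTION \); the lattice contains normal cones at all points of \( \LESS\oplus\ACTION \), most of which are in neither \( \LESS \) nor \( \ACTION \). The paper closes both holes by a different mechanism: since valuing more information is equivalent to Item~\ref{it:fusion} of Theorem~\ref{th:more_valuable_information}, one writes \( \LESS\oplus\ACTION=\LESS+\MEDIUM \) with \( \MEDIUM\in\REGULARACTIONPAYOFFSET \), so every point of \( \LESS\oplus\ACTION \) (in particular every \( \action\in\ACTION \)) decomposes as \( \less+\medium \) with \( \less\in\LESS \) handed to you for free, and then \( \NormalCone\np{\LESS\oplus\ACTION,\less+\medium}=\NormalCone\np{\LESS,\less}\cap\NormalCone\np{\MEDIUM,\medium}\subset\NormalCone\np{\LESS,\less} \) by~\eqref{eq:Normal_cone_Minkowski}, which yields both the lattice refinement and~\eqref{eq:additive_flexibility_more_valuable_information_CN} via the identity \( \NORMALCONE\np{\LESS\oplus\ACTION,\action}=\na{\SupportFunction{\LESS}\leq\SupportFunction{\ACTION}=\scalpro{\cdot}{\action}} \). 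You should reroute your argument through that decomposition rather than through pointwise affinity.
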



Becoming \emph{a little more flexible} is a notion introduced
in \cite[Sect.~2]{whitmeyer2024makinginformationvaluable}, where flexibility 
is obtained by the union of a single new decision.

\begin{proposition}
  \label{pr:additive_flexibility_more_valuable_information_little}
  Consider a decision-maker with \RegularActionPayoffSet~\(\LESS \in \REGULARACTIONPAYOFFSET \).
  Consider also \( \hat{\neutral}\in \RR^{\NATURE} \) and the associated
  decision-maker with \RegularActionPayoffSet~\( 
  \RR_{-}^{\NATURE} \otimes \na{\hat{\neutral}} \in \REGULARACTIONPAYOFFSET \).
  The union of both decision-makers has \RegularActionPayoffSet~\( 
  \LESS \oplus \np{ \RR_{-}^{\NATURE} \otimes \na{\hat{\neutral}} } \).
%
\begin{enumerate}
\item
\label{it:additive_flexibility_more_valuable_information_little_CNS}  
The decision-maker~\( \LESS \oplus \np{ \RR_{-}^{\NATURE} \otimes \na{\hat{\neutral}} } \)
values more information than the decision-maker~\( \LESS \)
if and only if 
\( \max\ba{0, \scalpro{\cdot}{\hat{\neutral}}-\RestrictedSupportFunction{\LESS}} \) is a convex function
(on~\( \BELIEF \)).
\item
  \label{it:additive_flexibility_more_valuable_information_little_CN}
  If the decision-maker~\( \LESS \oplus \np{ \RR_{-}^{\NATURE} \otimes \na{\hat{\neutral}} } \)
  values more information than the decision-maker~\( \LESS \), 
then, on the one hand,
\( \na{ \scalpro{\cdot}{\hat{\neutral}} \leq\RestrictedSupportFunction{\LESS} } \) is a convex subset
of~\( \BELIEF \) and, on the other hand,
there exists \( \less\in\LESS \) such that
\begin{equation}
  \na{ \RestrictedSupportFunction{\LESS} \leq \scalpro{\cdot}{\hat{\neutral}} }
  \subset
\na{ \RestrictedSupportFunction{\LESS}=\scalpro{\cdot}{\less} }
\eqfinp
\label{eq:additive_flexibility_more_valuable_information_little_CN}
\end{equation}
\item
  \label{it:additive_flexibility_more_valuable_information_little_CS}  
  If there exists \( \less\in\LESS \) such that
  \begin{subequations}
    \begin{equation}
\na{ \scalpro{\cdot}{\less} \leq \scalpro{\cdot}{\hat{\neutral}} }
\subset
\na{ \RestrictedSupportFunction{\LESS} \leq \scalpro{\cdot}{\hat{\neutral}} }
\subset 
\na{ \RestrictedSupportFunction{\LESS}=\scalpro{\cdot}{\less} }
    \eqfinv
    \label{eq:additive_flexibility_more_valuable_information_little_CS}
  \end{equation}
then the decision-maker~\( \LESS \oplus \np{ \RR_{-}^{\NATURE} \otimes \na{\hat{\neutral}} } \)
values more information than the decision-maker~\( \LESS \),
and we have that
\begin{align}
\max\ba{0, \scalpro{\cdot}{\hat{\neutral}}-\RestrictedSupportFunction{\LESS}}
  &=
    \max\ba{0, \scalpro{\cdot}{\hat{\neutral}-\less}}
    = \RestrictedSupportFunction{\RR_{-}^{\NATURE}\otimes\ClosedIntervalClosed{0}{\hat{\neutral}-\less}} 
\eqfinv
    \label{eq:additive_flexibility_more_valuable_information_little_CS_convex_function}
\\
  \LESS \oplus \np{ \RR_{-}^{\NATURE} \otimes \na{\hat{\neutral}} }
  &=
    \LESS \otimes 
    \np{\RR_{-}^{\NATURE}\otimes\ClosedIntervalClosed{0}{\hat{\neutral}-\less}} 
  \eqfinv
    \label{eq:additive_flexibility_more_valuable_information_little_CS}  
\end{align}
\end{subequations}
  where \( \ClosedIntervalClosed{0}{\hat{\neutral}-\less} \subset \RR^{\NATURE} \)
  denotes the segment between~$0$ and~\( \hat{\neutral}-\less \).

\end{enumerate}
\end{proposition}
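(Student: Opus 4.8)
The plan is to obtain the whole proposition as a specialization of the general Proposition~\ref{pr:additive_flexibility_more_valuable_information}, applied with the particular \RegularActionPayoffSet\ \( \ACTION = \RR_{-}^{\NATURE} \otimes \na{\hat{\neutral}} = \RR_{-}^{\NATURE} + \na{\hat{\neutral}} \). The single fact that makes every item fall into place is the support-function identity \( \RestrictedSupportFunction{\ACTION} = \scalpro{\cdot}{\hat{\neutral}} \), which is exactly the elementary (singleton) computation recorded after Definition~\ref{de:abstract_decision_problem}, namely \( \RestrictedSupportFunction{\RR_{-}^{\NATURE}+\na{\hat{\neutral}}} = \RestrictedSupportFunction{\RR_{-}^{\NATURE}} + \RestrictedSupportFunction{\na{\hat{\neutral}}} = 0 + \scalpro{\cdot}{\hat{\neutral}} \) via~\eqref{eq:support_function_Minkowski} and \( \RestrictedSupportFunction{\RR_{-}^{\NATURE}}=0 \). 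Note also that \( \hat{\neutral}\in\ACTION \), since \( 0\in\RR_{-}^{\NATURE} \).

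With this substitution in hand, Item~\ref{it:additive_flexibility_more_valuable_information_little_CNS} is immediate from Item~\ref{it:additive_flexibility_more_valuable_information_CNS}: replacing \( \RestrictedSupportFunction{\ACTION} \) by \( \scalpro{\cdot}{\hat{\neutral}} \) in the criterion ``\( \max\na{0,\RestrictedSupportFunction{\ACTION}-\RestrictedSupportFunction{\LESS}} \) convex'' yields the stated ``\( \max\na{0,\scalpro{\cdot}{\hat{\neutral}}-\RestrictedSupportFunction{\LESS}} \) convex''. For Item~\ref{it:additive_flexibility_more_valuable_information_little_CN}, the convexity of \( \na{\scalpro{\cdot}{\hat{\neutral}}\leq\RestrictedSupportFunction{\LESS}}=\na{\RestrictedSupportFunction{\ACTION}\leq\RestrictedSupportFunction{\LESS}} \) is the first conclusion of Item~\ref{it:additive_flexibility_more_valuable_information_CN}. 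For its second conclusion I would instantiate the universally quantified inclusion~\eqref{eq:additive_flexibility_more_valuable_information_CN} at the specific point \( \action=\hat{\neutral}\in\ACTION \); because \( \RestrictedSupportFunction{\ACTION}=\scalpro{\cdot}{\hat{\neutral}} \) holds at every \( \belief\in\BELIEF \), the level condition \( \RestrictedSupportFunction{\ACTION}=\scalpro{\cdot}{\action} \) in the set \( \na{\RestrictedSupportFunction{\LESS}\leq\RestrictedSupportFunction{\ACTION}=\scalpro{\cdot}{\hat{\neutral}}} \) is automatically satisfied, so this set collapses to \( \na{\RestrictedSupportFunction{\LESS}\leq\scalpro{\cdot}{\hat{\neutral}}} \) and~\eqref{eq:additive_flexibility_more_valuable_information_CN} becomes precisely~\eqref{eq:additive_flexibility_more_valuable_information_little_CN}.

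Item~\ref{it:additive_flexibility_more_valuable_information_little_CS} likewise follows from Item~\ref{it:additive_flexibility_more_valuable_information_CS}: the hypothesis of Item~\ref{it:additive_flexibility_more_valuable_information_little_CS} is nothing but~\eqref{eq:additive_flexibility_more_valuable_information_CS} with \( \RestrictedSupportFunction{\ACTION} \) rewritten as \( \scalpro{\cdot}{\hat{\neutral}} \), so ``more valuable information'' is granted, and it only remains to recast the two resulting formulas. The convex-function identity~\eqref{eq:additive_flexibility_more_valuable_information_CS_convex_function} gives \( \max\na{0,\scalpro{\cdot}{\hat{\neutral}}-\scalpro{\cdot}{\less}}=\max\na{0,\scalpro{\cdot}{\hat{\neutral}-\less}} \) directly, while the set identity requires rewriting \( \RR_{-}^{\NATURE}\oplus\np{\ACTION-\less} \). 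Writing \( v=\hat{\neutral}-\less \), so that \( \ACTION-\less=\RR_{-}^{\NATURE}+\na{v} \), a routine convex-combination argument shows \( \closedconvexhull\bp{\RR_{-}^{\NATURE}\cup\np{\RR_{-}^{\NATURE}+\na{v}}} = \RR_{-}^{\NATURE}+\ClosedIntervalClosed{0}{v}=\RR_{-}^{\NATURE}\otimes\ClosedIntervalClosed{0}{v} \), the right-hand side being already closed as the Minkowski sum of the closed cone \( \RR_{-}^{\NATURE} \) and the compact segment \( \ClosedIntervalClosed{0}{v} \). Taking restricted support functions through~\eqref{eq:support_function_Minkowski} together with \( \RestrictedSupportFunction{\RR_{-}^{\NATURE}}=0 \) and \( \RestrictedSupportFunction{\ClosedIntervalClosed{0}{v}}=\max\na{0,\scalpro{\cdot}{v}} \) recovers \( \max\na{0,\scalpro{\cdot}{\hat{\neutral}-\less}} \) and confirms both displayed equalities.

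I do not expect a genuine obstacle, since the statement is merely the point-generated instance of the general result. The only two places that demand care are, first, checking that the evaluation of~\eqref{eq:additive_flexibility_more_valuable_information_CN} at \( \action=\hat{\neutral} \) is both legitimate and sharp (which hinges on \( \RestrictedSupportFunction{\ACTION}=\scalpro{\cdot}{\hat{\neutral}} \) being an identity over all of \( \BELIEF \), not just an inequality), and, second, verifying the closed-convex-hull identity for \( \RR_{-}^{\NATURE}\oplus\np{\ACTION-\less} \) \emph{together with} its closedness, since it is here that a careless Minkowski-sum manipulation could silently drop the compactness that guarantees no closure is lost.
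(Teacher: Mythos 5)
Your proof is correct and follows essentially the same route as the paper's: specialize Proposition~\ref{pr:additive_flexibility_more_valuable_information} to \( \ACTION=\RR_{-}^{\NATURE}\otimes\na{\hat{\neutral}} \) using the identity \( \RestrictedSupportFunction{\ACTION}=\scalpro{\cdot}{\hat{\neutral}} \), then simplify the resulting formulas. The only (harmless) divergences are that in Item~\ref{it:additive_flexibility_more_valuable_information_little_CN} you instantiate the already-derived inclusion~\eqref{eq:additive_flexibility_more_valuable_information_CN} at \( \action=\hat{\neutral} \) rather than redoing the normal-cone-lattice computation as the paper does, and in Item~\ref{it:additive_flexibility_more_valuable_information_little_CS} you verify \( \RR_{-}^{\NATURE}\oplus\np{\ACTION-\less}=\RR_{-}^{\NATURE}\otimes\ClosedIntervalClosed{0}{\hat{\neutral}-\less} \) by a direct convex-hull/closedness argument where the paper invokes dioid distributivity; both give the same conclusions.
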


       \begin{figure}[hbt!]
      \centering
      \begin{tikzpicture}
   \coordinate (A) at (-3,0);    
  \coordinate (B) at (0,0);       \node[above] at (B) {B};
  \coordinate (C) at (4,-1);      \node[above right] at (C) {C};
  \coordinate (D) at (7,-5);      \node[right] at (D) {D};
  \coordinate (E) at (8,-8);     \node[right] at (E) {E};
  \coordinate (F) at (8,-10);   
  \coordinate (G) at (7,-10);
  \coordinate (H) at (-3,-10);
  
  \fill[pattern=north east lines, pattern color=gray!70]
    (A) -- (B) -- (C) -- (D) -- (G) -- (H);

  \draw[thick]
    (A) -- (B) -- (C) -- (D) -- (E) -- (F);
  \end{tikzpicture}
  \caption{If \( \LESS \) denotes the hashed polyhedral {\ActionPayoffSet} (see 
    Figure~\ref{fig:polyhedralActionPayoffSet}), the polyhedral {\ActionPayoffSet} to the left of
    the polyline \(\infty\)-B-C-D-E-\(\infty\) is obtained either as \( \LESS
    \oplus \na{E} = \closedconvexhull\bp{\LESS\cup\na{E}} \) or as 
    \( \LESS \otimes \ClosedIntervalClosed{0}{E-D} =  \LESS +
    \ClosedIntervalClosed{0}{E-D} \)}
      \label{fig:polyhedralActionPayoffSet_oplus_otimes}
    \end{figure}

\subsection{Relation with the literature}
\label{Relation_with_the_literature}

We relate our main result, Theorem~\ref{th:more_valuable_information} ---
and its variants Propositions~\ref{pr:more_valuable_information_implies_in_the_classic_setting}
and~\ref{pr:more_valuable_information_isimplied_in_the_classic_setting},
and also Propositions~\ref{pr:additive_flexibility_more_valuable_information}
and~\ref{pr:additive_flexibility_more_valuable_information_little} --- 
to results in three recent papers 
\cite{whitmeyer2024makinginformationvaluable},
\cite{Denti:2022} and \cite{Yoder:2022}.

\subsubsection{\cite{whitmeyer2024makinginformationvaluable}}

%

What we call \emph{valuing information (weakly) more} in Definition~\ref{de:VoIMore} is called
\emph{generating a greater value of information}
in \cite[Definition~2.1]{whitmeyer2024makinginformationvaluable}, but the definitions are the same.
We do not consider the notion of \emph{generating less information acquisition} of
\cite[Definition~2.2]{whitmeyer2024makinginformationvaluable}.

\subsubsubsection{\cite[Theorem~3.1]{whitmeyer2024makinginformationvaluable} and
Theorem~\ref{th:more_valuable_information}}

In our Theorem~\ref{th:more_valuable_information},
the equivalence between Item~\ref{it:values_more_information}
and Item~\ref{it:difference_is_convex} corresponds to
the equivalence between (i) and (ii) in
\cite[Theorem~3.1]{whitmeyer2024makinginformationvaluable},
a result already established, in one form or another,
in \cite[Proposition~0]{Jones-Ostroy:1984} (see also
\cite[Footnote~8]{whitmeyer2024makinginformationvaluable} which points to another source).
This is not our main contribution.
The main result of our paper is the equivalence between Item~\ref{it:values_more_information}
and Item~\ref{it:fusion} (or Item~\ref{it:fusion_star-difference})
in Theorem~\ref{th:more_valuable_information}, a result which is new (and not obtained
in \cite{whitmeyer2024makinginformationvaluable}).

\subsubsubsection{Refining and becoming more flexible}

Becoming more flexible 
in \cite[Sect.~2]{whitmeyer2024makinginformationvaluable} is obtained by
the union of a set of actions, which corresponds 
to \(\oplus\)-flexibility (Definition~\ref{de:additively_multiplicatively_more_flexible}).

To analyze the impact of flexibility on the value of information, Whitmeyer studies,
in \cite[\S~3.2]{whitmeyer2024makinginformationvaluable},
polyhedral functions --- inherited as value functions associated with finite decision sets
--- by projecting their (polyhedral) epigraph
onto the beliefs, thus obtaining a finite collection of polytopes, called \emph{cells}.
We make the connection with classic notions in the geometry of convex sets,
not necessarily polyhedral. For this purpose, we refer the reader to
the background in Sect.~\ref{Faces_exposed_faces_and_normal_cones}.
The \emph{cells} are a particular case (because the definition goes beyond polyhedra)
of \emph{normal cones} intersected with beliefs
as recalled in Definition~\ref{de:Normal_cone_lattice};
the \emph{regular polyhedral subdivision}~$C$ is a particular case (because the definition goes beyond polyhedra)
of \emph{normal cone lattice}\footnote{%
For convex polytopes, the {normal cone lattice} is called \emph{normal fan}.}
(restricted to beliefs), as recalled in Definition~\ref{de:Normal_cone_lattice};
the notion of \emph{finer} subdivision (\( \hat{C} \plusfine C \))
is a particular case (because the definition goes beyond polyhedra)
of comparison of normal cones (intersected with beliefs) by inclusion.

Then, \cite[Lemma~3.8]{whitmeyer2024makinginformationvaluable}
states that if the difference of two polyhedral functions is convex, then necessarily
the regular polyhedral subdivision of the first function refines that of the second.
In the following result, we obtain that 
\cite[Lemma~3.8]{whitmeyer2024makinginformationvaluable} can be extended to the
case of infinite action sets.
We use the notion of normal cone lattice as introduced in Definition~\ref{de:Normal_cone_lattice}.

\begin{proposition}
Consider two decision-makers with \RegularActionPayoffSet s
  \(\MORE \in \REGULARACTIONPAYOFFSET \) and \(\LESS \in \REGULARACTIONPAYOFFSET \). 
  If any of the equivalent statements in Theorem~\ref{th:more_valuable_information} holds true,
  then the normal cone lattice~$\NORMALCONE(\MORE)$ refines (is included in)
  the normal cone lattice~$\NORMALCONE(\LESS)$.
\end{proposition}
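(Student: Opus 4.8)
The plan is to route everything through Item~\ref{it:fusion} of Theorem~\ref{th:more_valuable_information}. Since the hypothesis is that one (hence all) of the equivalent statements hold, I would first invoke Item~\ref{it:fusion} to obtain a \RegularActionPayoffSet~$\MEDIUM \in \REGULARACTIONPAYOFFSET$ with $\MORE = \LESS + \MEDIUM$. Passing to support functions and using their additivity under Minkowski sum (Equation~\eqref{eq:support_function_Minkowski}, restricted to $\BELIEF$ as in~\eqref{eq:support_function_restriction}), this yields the single identity $\RestrictedSupportFunction{\MORE} = \RestrictedSupportFunction{\LESS} + \RestrictedSupportFunction{\MEDIUM}$ on~$\BELIEF$, which is the only analytic input the argument needs.

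Next I would unwind the definition of the normal cone lattice (Definition~\ref{de:Normal_cone_lattice}): a cell of $\NORMALCONE(\ACTION)$ is the normal cone of $\ACTION$ at a boundary point~$\primal$, intersected with beliefs, namely $\defset{\belief\in\BELIEF}{\RestrictedSupportFunction{\ACTION}\np{\belief}=\scalpro{\belief}{\primal}}$, the set of beliefs whose maximal expected utility is attained at~$\primal$. Since refinement (inclusion) of the lattices amounts to showing that each cell of $\NORMALCONE(\MORE)$ is contained in a cell of $\NORMALCONE(\LESS)$, I would fix an arbitrary cell $C = \defset{\belief\in\BELIEF}{\RestrictedSupportFunction{\MORE}\np{\belief}=\scalpro{\belief}{\primal}}$ of~$\MORE$. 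Because $\primal \in \MORE = \LESS + \MEDIUM$, I can decompose $\primal = \less + \medium$ with $\less \in \LESS$ and $\medium \in \MEDIUM$. Then for every $\belief \in C$,
\[
\scalpro{\belief}{\less}+\scalpro{\belief}{\medium}
= \scalpro{\belief}{\primal}
= \RestrictedSupportFunction{\MORE}\np{\belief}
= \RestrictedSupportFunction{\LESS}\np{\belief}+\RestrictedSupportFunction{\MEDIUM}\np{\belief},
\]
while $\RestrictedSupportFunction{\LESS}\np{\belief}\geq\scalpro{\belief}{\less}$ and $\RestrictedSupportFunction{\MEDIUM}\np{\belief}\geq\scalpro{\belief}{\medium}$ by definition of the support function. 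Equality of the two sums forces equality in each inequality, in particular $\RestrictedSupportFunction{\LESS}\np{\belief}=\scalpro{\belief}{\less}$. Hence $C \subseteq \defset{\belief\in\BELIEF}{\RestrictedSupportFunction{\LESS}\np{\belief}=\scalpro{\belief}{\less}}$, which is exactly the cell of $\NORMALCONE(\LESS)$ attached to~$\less$, and the refinement follows.

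The routine content is the saturation step above; the two points needing care are, first, aligning this explicit set-builder description of cells with the exact wording of Definition~\ref{de:Normal_cone_lattice} (in particular that cells are normal cones intersected with~$\BELIEF$ and indexed by faces of all dimensions, which the argument covers uniformly by taking $\primal$ in the relative interior of the face), and, second, checking that the decomposition $\primal=\less+\medium$ is available for every boundary point, which it is since $\MORE=\LESS+\MEDIUM$ as sets. I expect the main obstacle to be purely expository: confirming that ``refines (is included in)'' for the lattices is precisely the cell-by-cell inclusion just established, rather than any deeper geometric fact. The same conclusion could alternatively be reached from Item~\ref{it:difference_is_convex}, by noting that on any maximal cell of~$\MORE$ where $\RestrictedSupportFunction{\MORE}$ is affine, convexity of both $\RestrictedSupportFunction{\LESS}$ and $\RestrictedSupportFunction{\MORE}-\RestrictedSupportFunction{\LESS}$ forces $\RestrictedSupportFunction{\LESS}$ to be affine there as well, so that cell lies inside a maximal affine region, hence a cell, of~$\LESS$.
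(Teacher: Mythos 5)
Your argument is correct and follows essentially the same route as the paper: invoke Item~\ref{it:fusion} to write $\MORE = \LESS + \MEDIUM$, decompose each boundary point as $\less+\medium$, and conclude that the normal cone of $\MORE$ there is contained in $\NormalCone\np{\LESS,\less}$. The only difference is that the paper cites the identity $\NormalCone\np{\overline{\LESS+\MEDIUM},\less+\medium}=\NormalCone\np{\LESS,\less}\cap\NormalCone\np{\MEDIUM,\medium}$ (Equation~\eqref{eq:Normal_cone_Minkowski}) where you re-derive it inline via the saturation argument on support functions, which is exactly how that identity is obtained in the appendix.
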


\begin{proof}
On the one hand, by Item~\ref{it:fusion} in Theorem~\ref{th:more_valuable_information}
there exists a~\RegularActionPayoffSet~\( \MEDIUM \in \REGULARACTIONPAYOFFSET \) such that
\( \MORE = {\LESS +\MEDIUM} = \overline{\LESS +\MEDIUM} \).
Hence any element \( \more\in \MORE = \LESS+\MEDIUM \)
is of the form \( \more=\less+\medium \) where \( \less\in\LESS \) and \( \medium\in\MEDIUM \).
On the other hand, by Equation~\eqref{eq:Normal_cone_Minkowski}, we have that 
\( \NormalCone\np{ \overline{\LESS +\MEDIUM}, \less+\medium }
= \NormalCone\np{\LESS,\less} \cap \NormalCone\np{\MEDIUM,\medium} \).
Thus, we get that 
\( \NormalCone\np{ \MORE, \more}
= \NormalCone\np{ \overline{\LESS +\MEDIUM}, \less+\medium }
= \NormalCone\np{\LESS,\less} \cap \NormalCone\np{\MEDIUM,\medium}
\subset \NormalCone\np{\LESS,\less} \), which is refinement.
\end{proof}

\subsubsubsection{Refining and becoming a little more flexible}

Becoming a little more flexible 
in \cite[Sect.~2]{whitmeyer2024makinginformationvaluable} is obtained by
the union of a singleton, which 
is a special case of \(\oplus\)-flexibility
(Definition~\ref{de:additively_multiplicatively_more_flexible}).

For finite decision sets, \cite[Lemma~4.2]{whitmeyer2024makinginformationvaluable} claims that if
the regular polyhedral subdivision of the little more flexible decision-maker
is a refinement of the regular polyhedral subdivision of the original (less flexible) decision-maker,
then little flexibility leads to more valuable information. 
This has to be compared with Item~\ref{it:additive_flexibility_more_valuable_information_little_CS}
in our Proposition~\ref{pr:additive_flexibility_more_valuable_information_little}.
To obtain that little flexibility leads to more valuable information,
we require~\eqref{eq:additive_flexibility_more_valuable_information_little_CS}:
there exists \( \less\in\LESS \) such that
\( \na{ \scalpro{\cdot}{\less} \leq \scalpro{\cdot}{\hat{\neutral}} }
\subset
\na{ \RestrictedSupportFunction{\LESS} \leq \scalpro{\cdot}{\hat{\neutral}} }
\subset 
\na{ \RestrictedSupportFunction{\LESS}=\scalpro{\cdot}{\less} } \).
In~\eqref{eq:additive_flexibility_more_valuable_information_little_CS},
the condition 
\( \na{ \RestrictedSupportFunction{\LESS} \leq \scalpro{\cdot}{\hat{\neutral}} }
\subset 
\na{ \RestrictedSupportFunction{\LESS}=\scalpro{\cdot}{\less} } \)
is equivalent\footnote{%
  See Footnote~\ref{ft:almost}, where the necessary condition is also sufficient here,
  as we are in the case where \( \convexhull\np{\LESS\cup\ACTION} =
  {\LESS\oplus\ACTION} = \closedconvexhull\np{\LESS\cup\ACTION} \) with
\( \ACTION = \RR_{-}^{\NATURE}+\na{\hat{\neutral}} \).
}
  to the property that the normal cone lattice~$\NORMALCONE(\LESS\oplus\na{\hat{\neutral}})$ refines (is included in)
  the normal cone lattice~$\NORMALCONE(\LESS)$, as shown in the proof of
  Proposition~\ref{pr:additive_flexibility_more_valuable_information_little}
  in~\S\ref{Proof_of_Proposition_ref_pr:additive_flexibility_more_valuable_information_little}.
  However, in~\eqref{eq:additive_flexibility_more_valuable_information_little_CS},
  we also require the assumption
\( \na{ \scalpro{\cdot}{\less} \leq \scalpro{\cdot}{\hat{\neutral}} }
\subset
\na{ \RestrictedSupportFunction{\LESS} \leq \scalpro{\cdot}{\hat{\neutral}} } \),  
which is not stated in
\cite[Lemma~4.2]{whitmeyer2024makinginformationvaluable}
(and we do not see in what way it is related to the refinement assumption in
\cite[Lemma~4.2]{whitmeyer2024makinginformationvaluable}).

\subsubsubsection{Affine transformations of the agent's utility function}

We obtain the equivalent of \cite[Proposition~4.9]{whitmeyer2024makinginformationvaluable}
by using the property that
\( \kappa\LESS = \LESS + (\kappa-1) \LESS = \LESS \otimes (\kappa-1) \LESS \) if \( \kappa\geq 1 \)
and
\( \LESS = \kappa\LESS + (1-\kappa) \LESS = \kappa\LESS \otimes (1-\kappa) \LESS \) if \( \kappa\leq 1 \)
by \cite[p.~140]{Schneider:2014} as \( \LESS \) is convex,
and then by the equivalence between Item~\ref{it:values_more_information}
and Item~\ref{it:fusion} in Theorem~\ref{th:more_valuable_information}.


\subsubsection{\cite{Denti:2022}}

In \cite[I.~B., p.~3221]{Denti:2022}, value functions of a prior belief on \( \NATURE=\Theta \)
are denoted by \( \phi_F \), where $F$ is a finite menu of utility acts (\( F \subset \RR^\Theta \)). 
Then, in \cite[V.~A., p.~3221]{Denti:2022}, Denti points out that the property that \( \phi_F-\phi_G \) is convex
increases the incentive to acquire information and leads to more extreme beliefs.
Our result that Item~\ref{it:difference_is_convex} (\( \phi_F-\phi_G \) is convex)
in Theorem~\ref{th:more_valuable_information} implies
Item~\ref{it:values_more_information} (information is more valuable) is in phase with the claim of an 
increase in the incentive to acquire information, as this latter has more value.

Moreover, using Item~\ref{it:fusion} (or Item~\ref{it:fusion_star-difference}) in Theorem~\ref{th:more_valuable_information}
--- but also Propositions~\ref{pr:more_valuable_information_implies_in_the_classic_setting}
and~\ref{pr:more_valuable_information_isimplied_in_the_classic_setting},
and Propositions~\ref{pr:additive_flexibility_more_valuable_information}
and~\ref{pr:additive_flexibility_more_valuable_information_little} --- 
can help providing
menus~$F$ and~$G$ satisfying \( \phi_F-\phi_G \) is convex,
hence to develop experimental designs as in \cite[IV.~B., IV.~C]{Denti:2022}.
More precisely, Item~\ref{it:fusion} in our Theorem~\ref{th:more_valuable_information}
points to extending the menu~$G$ by \emph{multiplying options}
--- or, under specific conditions in Item~\ref{it:additive_flexibility_more_valuable_information_CS}
in our Proposition~\ref{pr:additive_flexibility_more_valuable_information}
and in Item~\ref{it:additive_flexibility_more_valuable_information_little_CS}
in our Proposition~\ref{pr:additive_flexibility_more_valuable_information_little}, by adding options ---
to obtain that \( \phi_F-\phi_G \) is convex.

\subsubsection{\cite{Yoder:2022}}

Right before \cite[Proposition~1]{Yoder:2022}, Yoder says 
that a function~$f$ is \emph{additively more concave} than a function~$g$
if $f = g + h$ for some continuous, strictly concave function~$h$.
The functions $f$ and $g$ typically are value functions of a prior belief on \( \NATURE=\na{0,1} \),
as appears in the sentence right after and also in \cite[Proposition~1]{Yoder:2022}.
When $f$ and $g$ are continuous value functions, additively more concave implies that
$g-f$ is convex, hence that Item~\ref{it:difference_is_convex}
in Theorem~\ref{th:more_valuable_information} holds true, hence that Item~\ref{it:values_more_information} holds true,
which ties up with the discussion after \cite[Proposition~1]{Yoder:2022}.

Using Item~\ref{it:fusion} in Theorem~\ref{th:more_valuable_information} ---
and also Item~\ref{it:additive_flexibility_more_valuable_information_CS}
in our Proposition~\ref{pr:additive_flexibility_more_valuable_information}
and Item~\ref{it:additive_flexibility_more_valuable_information_little_CS}
in our Proposition~\ref{pr:additive_flexibility_more_valuable_information_little} ---
can help providing additively more concave pairs of value functions $f$ and $g$
(with the caveat that $g-f$ is convex, and not necessarily strictly convex).

\section{Conclusion}
\label{Conclusion}

In this paper, we have focused on the question:
 for a given economic agent, what changes in decision variables and utility function
 make information more valuable?
 This question is studied in \cite{whitmeyer2024makinginformationvaluable}
 for changes that we coined as ``\(\oplus\)-flexibility'', or flexibility by union.
 The results in \cite{whitmeyer2024makinginformationvaluable} indicate
 that, by extending the set of options by adding (union) decision variables,
 there is no systematic comparison regarding the value of information;
 stringent conditions are required to make information more valuable by adding decision variables.

 With our main result in this paper, we propose an explanation for the above observation.
 Systematic comparison regarding the value of information is obtained if and
 only if the set of options is extended by \emph{multiplying} decision
 variables and adding utility --- that we coined as ``\(\otimes\)-flexibility'', or flexibility by fusion.
 Aside the original set of options, the economic agent has to select
 \emph{also} in a new set of options, leading to a pair of decisions (instead of
 a single one in the original problem), and then adding utilities.
 
 We have also introduced abstract decision problems and 
 dioids of \ExpectedUtilityMaximizer s that represent such problems under two
 different complementary angles. These dioids express two operations on
 decision makers, namely union and fusion.
 We hope that these new tools can help contribute to the literature on the value
 of information.
\bigskip

\textbf{Acknowledgments:} I thank Carlos Al\'os-Ferrer for his economic comments,
and Antoine Deza and Lionel Pournin for discussions about the Minkowski addition.

 \appendix

\section{Additional background on convex analysis}

We consider a nonempty finite set~$\NATURE$, that represents states of nature.

\subsection{Additional background on duality}

\subsubsubsubsection{Functions}

As we manipulate functions with values in~$\barRR = [-\infty,+\infty] $,
we adopt, when needed, the Moreau \emph{lower ($\LowPlus$) and upper ($\UppPlus$) additions} \cite{Moreau:1970},
which extend the usual addition~($+$) with 
\( \np{+\infty} \LowPlus \np{-\infty}=\np{-\infty} \LowPlus \np{+\infty}=-\infty \) and
\( \np{+\infty} \UppPlus \np{-\infty}=\np{-\infty} \UppPlus \np{+\infty}=+\infty \).
For any set~\( \UNCERTAIN \) and any subset \( \Uncertain \subset \UNCERTAIN \), we denote by
$\Indicator{\Uncertain} \colon \UNCERTAIN \to \barRR $ the
\emph{indicator function} of the set~$\Uncertain$, defined by
\( \Indicator{\Uncertain}\np{\uncertain} = 0 \) if
\( \uncertain \in \Uncertain \), and
\( \Indicator{\Uncertain}\np{\uncertain} = +\infty \) if
\( \uncertain \not\in \Uncertain \).

\subsubsubsubsection{Subsets}

For any two subsets $\Primal, \Primalbis\subset\RR^{\NATURE}$, we have that
\cite[Lemma~2.1]{Fradelizi-Madiman-Marsiglietti-Zvavitch:2018}
\begin{equation}
    \convexhull\np{\Primal+\Primalbis}
    =
      \convexhull\np{\Primal} + \convexhull\np{\Primalbis}
      \eqfinv 
      \label{eq:convexhull_Minkowski_sum}  
\end{equation}

\subsubsubsubsection{Duality}

The {support function} of a subset in~\eqref{eq:support_function} 
satisfies the following well-known properties\footnote{%
The \( \LowPlus \) in~\eqref{eq:support_function_Minkowski} accounts for the case where
\( \Primal=\emptyset \), hence \( \SupportFunction{\Primal}=-\infty \).}
\cite[Chapter~V, Section~2]{Hiriart-Urruty-Lemarechal-I:1993},
\begin{subequations}
  \begin{align}
    \SupportFunction{\Primal}
    &=
      \SupportFunction{\overline{\Primal}}=
      \SupportFunction{\convexhull{\Primal}}
      = \SupportFunction{\closedconvexhull{\Primal}}
      \eqsepv \forall \Primal, \Primalbis \subset\RR^{\NATURE}
      \eqfinv
       \label{eq:support_function_closedconvexhull}
    \\
    \SupportFunction{\Primal+\Primalbis}
    &=
      \SupportFunction{\Primal} \LowPlus \SupportFunction{\Primalbis}
            \eqsepv \forall \Primal, \Primalbis \subset\RR^{\NATURE}
      \eqfinv
      \label{eq:support_function_Minkowski}
          \\
    \SupportFunction{\Primal}
    &=
      \SupportFunction{\Primalbis} \implies
      \closedconvexhull{\Primal}=\closedconvexhull{\Primalbis}
            \eqsepv \forall \Primal, \Primalbis \subset\RR^{\NATURE}
      \eqfinv
      \label{eq:support_function_equal}
    \\
\SupportFunction{\cup_{i\in I}\Primal_i}
    &=
      \sup_{i\in I}\SupportFunction{\Primal_i}
      \eqsepv \forall \sequence{\Primal_i}{i\in I} \subset\RR^{\NATURE} 
      \eqfinp 
      \label{eq:support_function_union_sup}
  \end{align}
\end{subequations}

\subsection{Background on faces, exposed faces and normal cones}
\label{Faces_exposed_faces_and_normal_cones}

We follow \cite{Weis:2012} in the case of the finite dimensional real Euclidean vector
space~\( \RR^{\NATURE} \). 

\renewcommand{\PRIMAL}{\RR^{\NATURE}}
\renewcommand{\DUAL}{\RR^{\NATURE}}
\newcommand{\PrimalConvex}{X}
\newcommand{\DualConvex}{Y}
\newcommand{\PrimalConvexbis}{X'}
\newcommand{\DualConvexbis}{Y'}

\begin{subequations}
  \begin{definition}[Exposed face lattice]
  For any nonempty closed convex subset $\Convex \subset \PRIMAL$ and dual vector~$\dual \in \DUAL$,
  the exposed face of $\Convex$ at~$\dual$ is 
  \begin{equation}
    \ExposedFace(\Convex,\dual) 
    =\argmax_{\primal\in\Convex} \proscal{\primal}{\dual} \subset \Convex
    \eqfinp
\label{eq:ExposedFace}
  \end{equation}
  The set of exposed faces of~$\Convex$ will be denoted $\EXPOSEDFACE(\Convex)$
  and, when equipped with a lattice structure, will be called
  the \emph{exposed face lattice} of~$\Convex$.
\label{de:exposed_face_lattice}
\end{definition}
Using the support function~\( \SupportFunction{\Convex}\) in~\eqref{eq:support_function},
we get that
\begin{equation}
  \primal\in \ExposedFace(\Convex,\dual) \iff
  \primal\in\Convex \text{ and }
  \SupportFunction{\Convex}\np{\dual} - \proscal{\primal}{\dual} = 0
  \eqfinp
  \label{eq:ExposedFace_support_function}
\end{equation}
\end{subequations}
With this and~\eqref{eq:support_function_Minkowski}, we easily obtain that, for any nonempty closed convex subsets
$\Convex_1, \Convex_2 \subset \PRIMAL$ and dual vector~$\dual \in \DUAL$, we have that
\begin{equation}
  \ExposedFace(\overline{\Convex_1+\Convex_2},\dual) = 
  \ExposedFace(\Convex_1,\dual) + \ExposedFace(\Convex_2,\dual)
      \eqfinp
  \end{equation}

We denote by $\ri(\Primal)$ the relative interior of a nonempty convex subset~$\Primal$ of~$\PRIMAL$.

  \begin{definition}[Normal cone lattice]
    
\begin{subequations}
  For any nonempty closed convex subset $\Convex \subset \PRIMAL$
  and primal vector~\( \primal\in\Convex \), the normal cone~$\NormalCone(\Convex,\primal)$ 
  is defined by the conjugacy relation 
\begin{equation}
 \primal \in \Convex \mtext{ and } \dual \in \NormalCone(\Convex,\primal) 
\iff 
\primal \in \ExposedFace(\Convex,\dual) 
\eqfinv
\end{equation}
or, equivalently, by
\begin{equation}
 \primal \in \Convex \mtext{ and } \dual \in \NormalCone(\Convex,\primal) 
 \iff
  \primal\in\Convex \text{ and }
  \SupportFunction{\Convex}\np{\dual} - \proscal{\primal}{\dual} = 0
  \eqfinp
  \label{eq:NormalCone_support_function}
\end{equation}
\end{subequations}
Then, we define the \emph{normal cone} of a nonempty convex subset~$\Primal$ 
of~$\Convex$ by $\NormalCone(\Convex,\Primal)= \NormalCone(\Convex,\primal)$ for any $\primal\in \ri(\Primal)$.
The definition is consistent as $\NormalCone(\Convex,\primal)=
\NormalCone(\Convex,\primalbis)$
when $\primal$ and $\primalbis$ are both in $\ri(\Primal)$.
The definition is extended when $\Primal=\emptyset$ by
$\NormalCone(\Convex,\emptyset)= \DUAL$.

The \emph{normal cone lattice}~$\NORMALCONE(\Convex)$ of the nonempty closed convex subset~$\Convex \subset \PRIMAL$ is the
  set of the normal cones of all exposed faces, that is,
  \begin{equation}
    \NORMALCONE(\Convex)
    =\bset{ \NormalCone(\Convex,\Face)}{ \Face \in \EXPOSEDFACE(\Convex)}
    \eqfinp
  \end{equation}
  \label{de:Normal_cone_lattice}
\end{definition}

With~\eqref{eq:ExposedFace_support_function} and~\eqref{eq:support_function_Minkowski},
we easily obtain that, for any nonempty closed convex subsets
$\Convex_1, \Convex_2 \subset \PRIMAL$ and (primal) vectors
\( \primal_1 \in \Convex_1, \primal_2 \in \Convex_2 \), we have that
\begin{equation}
  \NormalCone\np{\overline{\Convex_1+\Convex_2},\primal_1+\primal_2}
= \NormalCone\np{\Convex_1,\primal_1} \cap \NormalCone\np{\Convex_2,\primal_2}
      \eqfinp
      \label{eq:Normal_cone_Minkowski}
    \end{equation}

    \section{Technical Propositions and proofs}

    We consider a nonempty finite set~$\NATURE$, that represents states of nature.
    
\subsection{Positively homogeneous extension of a function}


We show how a function over beliefs~\( \BELIEF \subset \RR^{\NATURE} \)
can be uniquely extended into a positively homogeneous function over~\( \RR^{\NATURE} \),
and related properties.

\begin{proposition}
  \label{pr:positively_homogeneous_extension}
  \quad
  \begin{enumerate}
  \item
\label{it:positively_homogeneous_extension_extension}
  Let \( \fonctionun \colon \BELIEF \to \barRR \) be a function,
which is not identically equal to~\( -\infty \).
  Then, there exists a unique function
  \( \fonctiondeux \colon \RR^{\NATURE}\to \barRR \) such that
 \begin{subequations}
    \begin{align}
      \fonctiondeux(\belief)
      &=
        \fonctionun(\belief)
      \eqsepv \forall \belief \in \BELIEF
           \eqfinv
                       \label{eq:positively_homogeneous_extension_coincide}
       \\
      \fonctiondeux(\lambda\signed)
      &=
\lambda \fonctiondeux(\signed)
        \eqsepv \forall \lambda \in \RR_{++}
                \eqsepv \forall \signed \in \RR_{+}^{\NATURE}\setminus\na{0} 
        \eqfinv
             \label{eq:positively_homogeneous_extension_homogeneous}
      \\
     \fonctiondeux(0)
      &=
        0
                \eqfinv
             \label{eq:positively_homogeneous_extension_0}
      \\
      \fonctiondeux(\signed)
      &=
+\infty  \eqsepv \forall \signed \in \RR^{\NATURE}\setminus\RR_{+}^{\NATURE}
        \eqfinp
             \label{eq:positively_homogeneous_extension_infty}
    \end{align}
      \label{eq:positively_homogeneous_extension}
    \end{subequations}
    We denote the function~\( \fonctiondeux \) by \( \widehat{\fonctionun} \colon \RR^{\NATURE}\to \barRR \).
    In case \( \fonctionun \equiv -\infty \) is the function identically equal to~\( -\infty \),
    we set \( \widehat{\fonctionun} \equiv -\infty \)
    (which does not satisfy~\eqref{eq:positively_homogeneous_extension_0} and~\eqref{eq:positively_homogeneous_extension_infty}).
    The function~\( \widehat{\fonctionun} \) is positively homogeneous,
    and we call it the \emph{positively homogeneous extension} of the function
     \( \fonctionun \colon \BELIEF \to \barRR \).
\item
  \label{it:positively_homogeneous_extension_comprehensive_set}
We have that 
      \begin{equation}
        \ACTION\in\ACTIONPAYOFFSET \implies
        \widehat{\RestrictedSupportFunction{\ACTION}}=\SupportFunction{\ACTION}
        \eqfinp
        \label{eq:positively_homogeneous_extension_comprehensive_set}
      \end{equation}
    \item
      \label{it:positively_homogeneous_extension_Conv} 
      Let \( \fonctionun \colon \BELIEF \to \barRR \) be a closed convex function,
      that is, \( \fonctionun \in \Conv{\BELIEF} \). 
Then, the function~\( \widehat{\fonctionun} \) is positively homogeneous
closed convex. 
Moreover, we have that
    \begin{subequations}
      \begin{equation}
\fonctionun \in \Conv{\BELIEF} \implies
        \widehat{\fonctionun} = \SupportFunction{\InverseRestrictedSupportFunction{\fonctionun}}
        \eqfinv 
\label{eq:positively_homogeneous_extension_Conv}
      \end{equation}
      where the subset \( \InverseRestrictedSupportFunction{\fonctionun} \)
      is defined in~\eqref{eq:isomorphism_b}, and satisfies
    \begin{equation}
      \InverseRestrictedSupportFunction{\fonctionun} 
      = \defset{\primal\in \RR^{\NATURE}}{ \scalpro{\signed}{\primal}
        \leq \widehat{\fonctionun}\np{\signed} \eqsepv \forall \signed\in \RR^{\NATURE}}
      \eqsepv \forall \fonctionun \in \Conv{\BELIEF} 
\eqfinp
\label{eq:positively_homogeneous_extension_InverseRestrictedSupportFunction}
\end{equation}
\end{subequations}
  \end{enumerate}
  %
\end{proposition}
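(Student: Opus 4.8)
The plan is to handle the three items in order, with the uniqueness clause of Item~\ref{it:positively_homogeneous_extension_extension} serving as the common engine: in each of Items~\ref{it:positively_homogeneous_extension_comprehensive_set} and~\ref{it:positively_homogeneous_extension_Conv} I will exhibit a positively homogeneous function agreeing with the given boundary data on~$\BELIEF$ and satisfying~\eqref{eq:positively_homogeneous_extension}, so that it must coincide with~$\widehat{\fonctionun}$. For Item~\ref{it:positively_homogeneous_extension_extension} itself, the key observation is that every $\signed\in\RR_{+}^{\NATURE}\setminus\na{0}$ factors uniquely as $\signed=\lambda\belief$ with $\lambda=\sum_{\nature\in\NATURE}\signed_{\nature}>0$ and $\belief=\signed/\lambda\in\BELIEF$. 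I therefore set $\widehat{\fonctionun}\np{\signed}=\bp{\sum_{\nature}\signed_{\nature}}\fonctionun\bp{\signed/\sum_{\nature}\signed_{\nature}}$ on $\RR_{+}^{\NATURE}\setminus\na{0}$, $\widehat{\fonctionun}\np{0}=0$, and $\widehat{\fonctionun}\np{\signed}=+\infty$ off $\RR_{+}^{\NATURE}$; relations~\eqref{eq:positively_homogeneous_extension_coincide}--\eqref{eq:positively_homogeneous_extension_infty} then hold by construction, and uniqueness follows because any candidate is pinned down on $\BELIEF$ by~\eqref{eq:positively_homogeneous_extension_coincide}, at~$0$ by~\eqref{eq:positively_homogeneous_extension_0}, off $\RR_{+}^{\NATURE}$ by~\eqref{eq:positively_homogeneous_extension_infty}, and on $\RR_{+}^{\NATURE}\setminus\na{0}$ by applying~\eqref{eq:positively_homogeneous_extension_homogeneous} to the factorization. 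Full positive homogeneity $\widehat{\fonctionun}\np{\lambda\signed}=\lambda\widehat{\fonctionun}\np{\signed}$ for all $\lambda\in\RR_{++}$ and $\signed\in\RR^{\NATURE}$ is then checked on the three regions, using that $\RR_{+}^{\NATURE}$ is a cone.

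For Item~\ref{it:positively_homogeneous_extension_comprehensive_set}, I will show that $\SupportFunction{\ACTION}$ itself satisfies the four defining relations~\eqref{eq:positively_homogeneous_extension}, whence uniqueness gives $\widehat{\RestrictedSupportFunction{\ACTION}}=\SupportFunction{\ACTION}$. Relation~\eqref{eq:positively_homogeneous_extension_coincide} is just the definition~\eqref{eq:support_function_restriction} of $\RestrictedSupportFunction{\ACTION}$; positive homogeneity of $\SupportFunction{\ACTION}$ and $\SupportFunction{\ACTION}\np{0}=0$ (as $\ACTION\neq\emptyset$) are immediate from~\eqref{eq:support_function}. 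The only place where the comprehensive hypothesis enters is~\eqref{eq:positively_homogeneous_extension_infty}: if $\signed\notin\RR_{+}^{\NATURE}$, choose a coordinate $\nature$ with $\signed_{\nature}<0$; for any $\primal\in\ACTION$ and $t\geq0$, the act obtained from $\primal$ by lowering its $\nature$-th coordinate by~$t$ still lies in $\ACTION$ (comprehensiveness), and its pairing $\scalpro{\signed}{\primal}-t\signed_{\nature}$ tends to $+\infty$ as $t\to+\infty$, so $\SupportFunction{\ACTION}\np{\signed}=+\infty$.

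Item~\ref{it:positively_homogeneous_extension_Conv} is the crux, and there the candidate is $\SupportFunction{\InverseRestrictedSupportFunction{\fonctionun}}$ with $\InverseRestrictedSupportFunction{\fonctionun}=\defset{\primal\in\RR^{\NATURE}}{\LFM{\fonctionun}\np{\primal}\leq0}$ (see~\eqref{eq:LFM_beliefs}). I first record that $\InverseRestrictedSupportFunction{\fonctionun}$ is comprehensive (same coordinate-lowering computation as above) and nonempty for proper $\fonctionun$ (the two constant cases $\fonctionun\equiv\pm\infty$ being checked directly): since $\LFM{\fonctionun}$ is proper, some $\primal_{0}$ has $\LFM{\fonctionun}\np{\primal_{0}}$ finite, and the shift below places it in $\InverseRestrictedSupportFunction{\fonctionun}$. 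I then verify the four relations for $\SupportFunction{\InverseRestrictedSupportFunction{\fonctionun}}$: homogeneity and the value~$0$ at the origin are free, relation~\eqref{eq:positively_homogeneous_extension_infty} follows from comprehensiveness of $\InverseRestrictedSupportFunction{\fonctionun}$ exactly as in Item~\ref{it:positively_homogeneous_extension_comprehensive_set}, and the decisive relation~\eqref{eq:positively_homogeneous_extension_coincide}, namely $\SupportFunction{\InverseRestrictedSupportFunction{\fonctionun}}\np{\belief}=\fonctionun\np{\belief}$ for $\belief\in\BELIEF$, is the shift argument described next. Uniqueness then yields~\eqref{eq:positively_homogeneous_extension_Conv}, and because a support function is closed convex positively homogeneous (as recorded after~\eqref{eq:support_function}), so is $\widehat{\fonctionun}$.

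The shift argument is the main obstacle, and it exploits that $\sum_{\nature}\belief_{\nature}=1$ on $\BELIEF$: writing $\mathbf{1}=\np{1,\dots,1}$, one has $\scalpro{\belief}{\primal+c\mathbf{1}}=\scalpro{\belief}{\primal}+c$ and, since the supremum in~\eqref{eq:LFM_beliefs} ranges over $\BELIEF$, also $\LFM{\fonctionun}\np{\primal+c\mathbf{1}}=\LFM{\fonctionun}\np{\primal}+c$ for every $\primal\in\RR^{\NATURE}$ and $c\in\RR$. The inequality $\SupportFunction{\InverseRestrictedSupportFunction{\fonctionun}}\np{\belief}\leq\fonctionun\np{\belief}$ is immediate from the definition of $\InverseRestrictedSupportFunction{\fonctionun}$. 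For the reverse, Fenchel--Moreau biconjugation applied to the closed convex extension of $\fonctionun$ by $+\infty$ off $\BELIEF$ gives $\fonctionun\np{\belief}=\sup_{\primal}\bp{\scalpro{\belief}{\primal}-\LFM{\fonctionun}\np{\primal}}$; for each $\primal$ with $\LFM{\fonctionun}\np{\primal}$ finite, the shifted point $\primal'=\primal-\LFM{\fonctionun}\np{\primal}\mathbf{1}$ satisfies $\LFM{\fonctionun}\np{\primal'}=0$, hence $\primal'\in\InverseRestrictedSupportFunction{\fonctionun}$, with $\scalpro{\belief}{\primal'}=\scalpro{\belief}{\primal}-\LFM{\fonctionun}\np{\primal}$; taking suprema yields $\SupportFunction{\InverseRestrictedSupportFunction{\fonctionun}}\np{\belief}\geq\fonctionun\np{\belief}$. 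Finally,~\eqref{eq:positively_homogeneous_extension_InverseRestrictedSupportFunction} is a direct rewriting of the definition of $\InverseRestrictedSupportFunction{\fonctionun}$: the constraint $\scalpro{\signed}{\primal}\leq\widehat{\fonctionun}\np{\signed}$ is vacuous for $\signed\notin\RR_{+}^{\NATURE}$ and, by positive homogeneity of $\widehat{\fonctionun}$, reduces on $\RR_{+}^{\NATURE}\setminus\na{0}$ to $\scalpro{\belief}{\primal}\leq\fonctionun\np{\belief}$ for $\belief\in\BELIEF$.
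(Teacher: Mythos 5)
Your proof is correct, and for the substantive part (Item~\ref{it:positively_homogeneous_extension_Conv}) it takes a genuinely different route from the paper's. Items~\ref{it:positively_homogeneous_extension_extension} and~\ref{it:positively_homogeneous_extension_comprehensive_set} match the paper in substance: the explicit formula $\widehat{\fonctionun}(\signed)=\scalpro{\signed}{1}\,\fonctionun\np{\signed/\scalpro{\signed}{1}}$ and uniqueness-by-factorization are exactly the paper's argument, and for~\eqref{eq:positively_homogeneous_extension_infty} you replace the paper's support-function calculus ($\SupportFunction{\ACTION}=\SupportFunction{\ACTION+\RR_{-}^{\NATURE}}=\SupportFunction{\ACTION}\LowPlus\Indicator{\RR_{+}^{\NATURE}}$) by an equivalent direct coordinate-lowering argument; the only omission is the degenerate case $\ACTION=\emptyset$, which Definition~\ref{de:action_set} admits into $\ACTIONPAYOFFSET$ and which the paper dispatches in one line via the $-\infty$ conventions --- add that line. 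For Item~\ref{it:positively_homogeneous_extension_Conv} the paper works on the primal side: it proves $\epigraph\widehat{\fonctionun}=\overline{\RR_{++}\epigraph\fonctionun}$ by a two-inclusion sequential argument (compactness of~$\BELIEF$ plus lower semicontinuity of~$\fonctionun$), concludes that $\widehat{\fonctionun}$ is closed sublinear, and only then invokes the representation theorem for closed sublinear functions to identify it as $\SupportFunction{\InverseRestrictedSupportFunction{\fonctionun}}$. You instead posit the candidate $\SupportFunction{\InverseRestrictedSupportFunction{\fonctionun}}$ up front and verify the four defining relations, the key identity $\SupportFunction{\InverseRestrictedSupportFunction{\fonctionun}}=\fonctionun$ on~$\BELIEF$ coming from Fenchel--Moreau biconjugation combined with the shift $\primal\mapsto\primal-\LFM{\fonctionun}\np{\primal}\,1$, legitimate precisely because $\scalpro{\belief}{1}=1$ on the simplex so that $\LFM{\fonctionun}\np{\primal+c\,1}=\LFM{\fonctionun}\np{\primal}+c$. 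This buys you two things: closedness and convexity of~$\widehat{\fonctionun}$ come for free from the fact that support functions are closed convex, so the delicate epigraph-closure computation disappears entirely; and~\eqref{eq:positively_homogeneous_extension_Conv} together with~\eqref{eq:positively_homogeneous_extension_InverseRestrictedSupportFunction} is obtained in the same stroke rather than as a consequence. The price is that your argument leans specifically on the normalization $\sum_{\nature}\belief_{\nature}=1$, whereas the paper's epigraph route would carry over to a general compact convex base not containing the origin. Both proofs are sound.
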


\begin{proof}
  \quad
    \begin{enumerate}
  \item
    (Proof of Item~\ref{it:positively_homogeneous_extension_extension})
    
  To prove that there exists a unique function \( \fonctiondeux \colon \RR^{\NATURE}\to \barRR \) that
satisfies~\eqref{eq:positively_homogeneous_extension},
it suffices to show~\eqref{eq:positively_homogeneous_extension_homogeneous}.
For that purpose, denoting by $1 \in \RR_{+}^{\NATURE}$ the vector with all components equal
to~$1$, 
if~\eqref{eq:positively_homogeneous_extension_coincide} and~\eqref{eq:positively_homogeneous_extension_homogeneous}
hold true, we necessarily have that 
\begin{equation}
  \fonctiondeux(\signed) =  \scalpro{\signed}{1}
  \fonctionun\np{\frac{\signed}{\scalpro{\signed}{1}}}
        \eqsepv \forall \signed \in \RR_{+}^{\NATURE}\setminus\na{0} 
        \eqfinp 
        \label{eq:positively_homogeneous_extension_formula}
      \end{equation}
      So, we get uniqueness. Then, it is easy to see that \eqref{eq:positively_homogeneous_extension_formula}
      implies~\eqref{eq:positively_homogeneous_extension_coincide} and~\eqref{eq:positively_homogeneous_extension_homogeneous}.
      By extending the function \( \fonctiondeux \) beyond \( \RR_{+}^{\NATURE}\setminus\na{0} \)
      by~\eqref{eq:positively_homogeneous_extension_0} and~\eqref{eq:positively_homogeneous_extension_infty},
      we obtain the unique function \( \fonctiondeux \colon \RR^{\NATURE}\to \barRR \) that
      satisfies~\eqref{eq:positively_homogeneous_extension}.
      From now on, we denote \( \widehat{\fonctionun} = \fonctiondeux \),
      and \( \widehat{\fonctionun} \equiv -\infty \) in the case \( \fonctionun \equiv -\infty \).
It is straightforward, from~\eqref{eq:positively_homogeneous_extension}, to observe that
\begin{equation}
  \epigraph\widehat{\fonctionun} = \np{\RR_{++} \epigraph\fonctionun} \cup \np{\na{0}\times\RR_{+}}
  \eqfinp
\label{eq:epigraph_fonctiondeux}
\end{equation}
As \( \epigraph\widehat{\fonctionun} \) is a cone,
the function~\( \widehat{\fonctionun} \) is positively homogeneous.
This remains true in the case \( \fonctionun \equiv -\infty \),
as \( \widehat{\fonctionun} \equiv -\infty \) is positively homogeneous.

  \item
    (Proof of Item~\ref{it:positively_homogeneous_extension_comprehensive_set})
    
    Let \( \ACTION\in\ACTIONPAYOFFSET \). 
    We set \( \fonctionun=\RestrictedSupportFunction{\ACTION} \).
If \( \ACTION\neq\emptyset \), we observe that the function~\( \fonctiondeux=\SupportFunction{\ACTION} \)
    satisfies~\eqref{eq:positively_homogeneous_extension_coincide},
    \eqref{eq:positively_homogeneous_extension_homogeneous}
    and~\eqref{eq:positively_homogeneous_extension_0} (because \( \ACTION\neq\emptyset \)).
    The function~\( \fonctiondeux=\SupportFunction{\ACTION} \)
    also satisfies~\eqref{eq:positively_homogeneous_extension_infty} because
\begin{align*}
  \SupportFunction{\ACTION}
  &=
    \SupportFunction{\ACTION \otimes \RR_{-}^{\NATURE}}
    \tag{as \( \ACTION \otimes \RR_{-}^{\NATURE} = \ACTION \) by Definition~\ref{de:action_set}
of~\( \ACTIONPAYOFFSET \)}
  \\
  &=
    \SupportFunction{\closedconvexhull\np{\ACTION + \RR_{-}^{\NATURE}}}
\tag{by definition~\eqref{eq:two_operations_subsets_otimes} of \( \ACTION \otimes \RR_{-}^{\NATURE} \)}
  \\
  &=
    \SupportFunction{\ACTION + \RR_{-}^{\NATURE}}
    \tag{by~\eqref{eq:support_function_closedconvexhull}}
  \\
  &=
\SupportFunction{\ACTION} \LowPlus \SupportFunction{\RR_{-}^{\NATURE}}
     \tag{by~\eqref{eq:support_function_Minkowski}}
  \\
   &=
\SupportFunction{\ACTION} \LowPlus \Indicator{\RR_{+}^{\NATURE}} 
     \tag{as it is easily seen that
     \(\SupportFunction{\RR_{-}^{\NATURE}} = \Indicator{\RR_{+}^{\NATURE}} \)
\cite[Example.~2.3.1]{Hiriart-Urruty-Lemarechal-I:1993} 
     }
\eqfinp
\end{align*}
Now, as \( \ACTION\neq\emptyset \), the function~\( \SupportFunction{\ACTION} \)
never takes the value~\( -\infty \), so that
\( \SupportFunction{\ACTION} = \SupportFunction{\ACTION} \LowPlus \Indicator{\RR_{+}^{\NATURE}} =
\SupportFunction{\ACTION} + \Indicator{\RR_{+}^{\NATURE}} \)
takes the value~\( +\infty \) outside of~\( \RR_{+}^{\NATURE} \).
Thus, we have proven that the function~\( \fonctiondeux=\SupportFunction{\ACTION} \)
satisfies~\eqref{eq:positively_homogeneous_extension}.
By the just proven Item~\ref{it:positively_homogeneous_extension_extension},
we conclude that \( \widehat{\RestrictedSupportFunction{\ACTION}}=\SupportFunction{\ACTION} \).
This is also true if \( \ACTION=\emptyset \), because then
\( \fonctionun=\RestrictedSupportFunction{\ACTION} =\RestrictedSupportFunction{\emptyset}\equiv-\infty \)
and \( \widehat{\RestrictedSupportFunction{\ACTION}}=
\widehat{\fonctionun} \equiv -\infty  = \SupportFunction{\emptyset} \). 

      \item
        (Proof of Item~\ref{it:positively_homogeneous_extension_Conv})

      Let \( \fonctionun \colon \BELIEF \to \barRR \) be a closed convex function,
      that is, \( \fonctionun \in \Conv{\BELIEF} \). 
   In case, \( \fonctionun \equiv -\infty \) is the function identically equal to~\( -\infty \),
   \eqref{eq:positively_homogeneous_extension_Conv} holds true
   as \( \widehat{\fonctionun} \equiv -\infty = \SupportFunction{\emptyset} \),
   and~\eqref{eq:positively_homogeneous_extension_InverseRestrictedSupportFunction}
    holds true with \( \InverseRestrictedSupportFunction{\fonctionun}=\emptyset \).

So, from now on, we suppose that the function \( \fonctionun \colon \BELIEF \to \barRR \) is 
proper convex \lsc.
As the function \( \fonctionun \colon \BELIEF \to \barRR \) is proper,
it is obvious by~\eqref{eq:positively_homogeneous_extension_coincide}
that the function \( \widehat{\fonctionun} \colon \RR^{\NATURE}\to \barRR \) is proper.
We are going to prove that the conic epigraph~\( \epigraph\widehat{\fonctionun} \)
given by~\eqref{eq:epigraph_fonctiondeux} is closed convex.
For that purpose, we show that
\( \epigraph\widehat{\fonctionun} = \overline{\RR_{++} \epigraph\fonctionun} \)
by two inclusions.

To prove the inclusion \( \epigraph\widehat{\fonctionun} \subset \overline{\RR_{++} \epigraph\fonctionun} \),
it suffices to show that
\( \na{0}\times \RR_{+} \subset \overline{\RR_{++} \epigraph\fonctionun} \),
because of~\eqref{eq:epigraph_fonctiondeux}.
As the function \( \fonctionun \colon \BELIEF \to \RR \) is proper,
there exists  \( \belief\in\BELIEF \) such that 
\( -\infty < \fonctionun(\belief) < +\infty \).
Then, for any \( t\in \RR_{+} \), we get that 
  \( \RR_{++}\epigraph\fonctionun \ni
  \bp{ \frac{1}{n} \belief, \frac{1}{n} \fonctionun\np{\belief}+t} 
  \to_{n\to +\infty} \np{0,t} \in \na{0}\times \RR_{+} \). 
  Thus, we have shown that
  \( \na{0}\times \RR_{+} \subset \overline{\RR_{++} \epigraph\fonctionun} \),
  hence that 
  \( \epigraph\widehat{\fonctionun} \subset \overline{\RR_{++} \epigraph\fonctionun} \)
  by~\eqref{eq:epigraph_fonctiondeux}.

  To prove the reverse inclusion
  \( \epigraph\widehat{\fonctionun} \supset  \overline{\RR_{++} \epigraph\fonctionun} \), 
  we consider sequences 
\( \sequence{\belief_n}{n\in\NN} \subset \BELIEF \), 
\( \sequence{\lambda_n}{n\in\NN} \subset \RR_{++} \), 
and \( \sequence{t_n}{n\in\NN} \subset \RR_{+} \)
such that the sequence
\( \sequence{\bp{\lambda_n\belief_n,\lambda_n\fonctionun(\belief_n)+t_n}}{n\in\NN}
\subset \RR_{++} \epigraph\fonctionun \) 
converges to \( \np{\bar\signed,\bar z} \in \RR^{\NATURE}\times \RR \).
Notice that \( \np{\bar\signed,\bar z} \in \RR_{+}^{\NATURE}\times \RR \),
as $\RR_{++}\BELIEF = \RR_{+}^{\NATURE}\setminus\na{0}$, hence
$\overline{\RR_{++}\BELIEF} = \RR_{+}^{\NATURE}$.
We are going to show that \( \np{\bar\signed,\bar z} \in \epigraph\widehat{\fonctionun} \)
given by~\eqref{eq:epigraph_fonctiondeux}.
From \( \lambda_n\belief_n \to_{n\to +\infty} \bar\signed \) and
\( \scalpro{\belief_n}{1}=1 \), we get that \( \lambda_n = \scalpro{\lambda_n\belief_n}{1} \to_{n\to +\infty}
\scalpro{\bar\signed}{1} \in \RR_{+} \).
As \( \sequence{\belief_n}{n\in\NN} \subset \BELIEF \), where \( \BELIEF \) is a compact set,
we can always suppose (up to relabeling) that there exists \( \bar\belief\in\BELIEF \) such that 
\( \belief_n \to_{n\to +\infty} \bar\belief \in\BELIEF \),
hence that \( \liminf_{n\to +\infty}\fonctionun(\belief_n) \geq \fonctionun(\bar\belief) \)
since the function \( \fonctionun \colon \BELIEF \to \RR \) is \lsc.
Then, from \( \bp{\lambda_n\belief_n,\lambda_n\fonctionun(\belief_n)+t_n}
\to_{n\to +\infty} \np{\bar\signed,\bar z} \),
  we deduce, on the one hand, that 
  \( \bar\signed =  \lim_{n\to +\infty}\lambda_n\belief_n = \scalpro{\bar\signed}{1}\bar\belief \)
  and, on the other hand, that
  \begin{equation*}  
 \bar z =  \lim_{n\to +\infty}\lambda_n\fonctionun(\belief_n)+\underbrace{t_n}_{\geq 0}
  \geq \liminf_{n\to +\infty}\lambda_n\fonctionun(\belief_n)
  = \underbrace{\scalpro{\bar\signed}{1}}_{\geq 0} \liminf_{n\to +\infty}\fonctionun(\belief_n) 
  \geq \scalpro{\bar\signed}{1} \fonctionun(\bar\belief)
   \eqfinv 
\end{equation*}
  hence that
\( \bar\signed = \scalpro{\bar\signed}{1}\bar\belief \) and 
\( \bar z \geq \scalpro{\bar\signed}{1} \fonctionun(\bar\belief) \).
Then, we consider two cases.
In the case where \( \scalpro{\bar\signed}{1} > 0 \), we get that
\( \np{\bar\signed,\bar z} \in \RR_{++} \epigraph\fonctionun \subset \epigraph\widehat{\fonctionun} \)
by~\eqref{eq:epigraph_fonctiondeux}.
In the case where \( \scalpro{\bar\signed}{1} = 0 \), we get that
\( \np{\bar\signed,\bar z}=\np{0,\bar z} \in \na{0}\times\RR_{+} \subset \epigraph\widehat{\fonctionun} \)
by~\eqref{eq:epigraph_fonctiondeux}.
Thus, we have proved that \( \overline{\RR_{++} \epigraph\fonctionun} \subset \epigraph\widehat{\fonctionun} \).

Having proven both
\( \epigraph\widehat{\fonctionun} \subset  \overline{\RR_{++} \epigraph\fonctionun} \)
and \( \overline{\RR_{++} \epigraph\fonctionun} \subset \epigraph\widehat{\fonctionun} \),
we conclude that
\( \epigraph\widehat{\fonctionun} =\overline{\RR_{++}\epigraph\fonctionun} \).
As a consequence that the function \( \fonctionun \colon \BELIEF \to \RR \) is
convex, \( \epigraph\fonctionun \) is a convex set, hence
\( \overline{\RR_{++}\epigraph\fonctionun} \) is a closed convex cone, hence
\( \epigraph\widehat{\fonctionun} =\overline{\RR_{++}\epigraph\fonctionun} \)
is a closed convex cone \cite[Proposition~6.2]{Bauschke-Combettes:2017}. 
 By \cite[Chapter~V, Proposition~1.1.3]{Hiriart-Urruty-Lemarechal-I:1993}, 
the function~\( \widehat{\fonctionun} \) is sublinear and \lsc.
As a consequence,
by \cite[Chapter~V, Theorem~3.1.1]{Hiriart-Urruty-Lemarechal-I:1993}, 
we have that \( \widehat{\fonctionun} = \SupportFunction{S} \), where
\(  S= \defset{\primal\in \RR^{\NATURE}}{ \scalpro{\belief}{\primal}
    \leq \widehat{\fonctionun}\np{\signed} \eqsepv \forall \signed\in \RR^{\NATURE}} \).
As the function~\( \widehat{\fonctionun} \) satisfies~\eqref{eq:positively_homogeneous_extension},
it is easy to see that
\begin{equation*}
  S=\defset{\primal\in \RR^{\NATURE}}{ \scalpro{\signed}{\primal}
    \leq \widehat{\fonctionun}\np{\signed} \eqsepv \forall \signed\in \RR^{\NATURE}}
  = \defset{\primal\in \RR^{\NATURE}}{ \scalpro{\belief}{\primal}
  \leq \fonctionun\np{\belief} \eqsepv \forall \belief\in\BELIEF } =
\InverseRestrictedSupportFunction{\fonctionun}
\eqfinv
\end{equation*}
where this latter set is defined in~\eqref{eq:isomorphism_b}.
Thus, we have obtained that 
\( \widehat{\fonctionun} = \SupportFunction{\InverseRestrictedSupportFunction{\fonctionun}} \),
where \( \InverseRestrictedSupportFunction{\fonctionun} \)
satisfies~\eqref{eq:positively_homogeneous_extension_InverseRestrictedSupportFunction}. 
The subset \( \InverseRestrictedSupportFunction{\fonctionun} \) is not empty
because the function~\( \widehat{\fonctionun} \) is proper.

  \end{enumerate}

This ends the proof.
\end{proof}

    \subsection{More on dioids of \ExpectedUtilityMaximizer s}

In~\S\ref{Dioids_of_value_functions_over_beliefs}, we introduce dioids of value functions over beliefs.
In~\S\ref{Dioids_of_ActionPayoffSet_s}, we introduce dioids of \ActionPayoffSet s.
In~\S\ref{Inverse_homomorphisms_between_dioids}, we prove that the two dioids above
are isomorphic.

\subsubsubsection{Additional background on dioids}
In addition to the notions introduced in~\S\ref{The_dioids_of_ExpectedUtilityMaximizer_s},
we provide additional background on dioids.
A dioid is \emph{complete} if it is closed for infinite ``sums''
and if distributivity of $\otimes$ \wrt\ $\oplus$ holds true with infinite ``sums''
\cite[Definition~4.32]{Baccelli-Cohen-Olsder-Quadrat:1992}. 
%
%
A mapping between two dioids is \emph{lower semicontinuous} (\lsc) if
it sends infinite sums to infinite sums
\cite[Definition~4.43]{Baccelli-Cohen-Olsder-Quadrat:1992}. 
Thus, a \lsc\ homormorphism between two dioids sends infinite sums to infinite sums, products to products,
neutral element to neutral element, and unit element to unit element
%


\subsubsection{Dioids of functions}
\label{Dioids_of_value_functions_over_beliefs}

We equip the set~\( \barRR^{\RR^{\NATURE}} \) of
functions \( \RR^{\NATURE}\to\barRR \) with the two operations~\( \oplus, \otimes \) given by
\begin{subequations}
  \begin{align}
    \oplus_{i\in I}\fonctionun_i
    &=
      \sup_{i\in I}\fonctionun_i 
      \eqsepv \forall \sequence{\fonctionun_i}{i\in I} 
      \text{ with } \fonctionun_i \colon \RR^{\NATURE}\to\barRR \eqsepv \forall i\in I
      \eqfinv
  \label{eq:two_operations_functions_oplus}
    \\
    \fonctionun \otimes \fonctiondeux
    &=
      \fonctionun \LowPlus \fonctiondeux
  \eqsepv \forall \fonctionun, \fonctiondeux \colon \RR^{\NATURE}\to\barRR 
      \eqfinp
        \label{eq:two_operations_functions_otimes}
  \end{align}
  \label{eq:two_operations_functions}
\end{subequations}


\begin{proposition}
  \label{pr:dioids_of_value_functions_over_beliefs}
  Endowed with the two operations~\eqref{eq:two_operations_functions},
\( \np{ \barRR^{\RR^{\NATURE}}, \oplus, \otimes } \) is a complete dioid.
By restricting to the set~\( \Conv{\BELIEF} \) of closed convex functions \( \BELIEF \to \barRR \),
we obtain a complete dioid, that we call
the \emph{dioid \( \bp{ \Conv{\BELIEF}, \oplus, \otimes } \) of value functions over beliefs}.
By restricting to~\( \RegularConv{\BELIEF} \cup \na{-\infty}\), we obtain 
the dioid \( \bp{ \RegularConv{\BELIEF} \cup \na{-\infty}, \oplus, \otimes } \) of
  \RegularValueFunction s (over beliefs) (defined right after 
Proposition~\ref{pr:isomorphism_continuous}). 
\end{proposition}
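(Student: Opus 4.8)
The plan is to verify the dioid axioms once, for the ambient function space $\barRR^{\RR^{\NATURE}}$, and then obtain the two sub-dioids by closure arguments.

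First I would treat $\np{ \barRR^{\RR^{\NATURE}}, \oplus, \otimes }$. Commutativity, associativity and idempotency of $\oplus=\sup$ are immediate from the lattice structure of~$\barRR$, and associativity of $\otimes=\LowPlus$ is the standard associativity of the Moreau lower addition. The constant function~$0$ is plainly the unit for~$\otimes$, and the constant function~$-\infty$ is neutral for~$\oplus$. The point where the choice of the \emph{lower} addition is essential is absorption: for $-\infty$ to be a zero element of the dioid it must be absorbing for~$\otimes$, which is exactly the content of the convention $\np{-\infty}\LowPlus\np{+\infty}=-\infty$, giving $\np{-\infty}\LowPlus a=-\infty$ for every $a\in\barRR$. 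Distributivity I would check pointwise: for all $a,b,c\in\barRR$,
\begin{equation*}
  \sup\na{a,b} \LowPlus c = \sup\na{a \LowPlus c,\, b \LowPlus c}
  \eqfinp
\end{equation*}
For finite~$c$ this is translation-invariance of the supremum; for $c=-\infty$ both sides collapse to~$-\infty$ by the absorption just noted; and for $c=+\infty$ a short case split on whether $a$ or $b$ equals~$-\infty$ shows equality. Replacing $\sup\na{a,b}$ by an arbitrary supremum in this computation gives infinite distributivity, and closure under infinite~$\oplus$ is automatic because $\barRR$ is a complete lattice; hence the dioid is complete.

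Next I would show that $\Conv{\BELIEF}$ is a complete sub-dioid. Closure under~$\oplus$ is clear, since a pointwise supremum of closed convex functions is again closed convex (convexity and lower semicontinuity both pass to pointwise suprema, and the two constant functions are absorbed). Closure under~$\otimes$ carries the only real bookkeeping: the lower sum of two convex \lsc\ functions is convex and \lsc, and I would verify that every way in which properness can fail --- disjoint effective domains, or a summand equal to the constant~$+\infty$, or a summand equal to~$-\infty$ --- produces one of the constants $\pm\infty$, both of which lie in~$\Conv{\BELIEF}$; here too the convention $\np{+\infty}\LowPlus\np{-\infty}=-\infty$ is precisely what keeps the outcome inside~$\Conv{\BELIEF}$. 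Since the neutral element~$-\infty$ and the unit element~$0$ already belong to~$\Conv{\BELIEF}$, all dioid axioms are inherited from the ambient space, and completeness follows from closure under arbitrary suprema.

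Finally, for $\RegularConv{\BELIEF}\cup\na{-\infty}$ almost nothing remains, since this is exactly the dioid already exhibited in Item~\ref{it:isomorphism_continuous_functions} of Proposition~\ref{pr:isomorphism_continuous}. I would merely remark that the supremum and the (lower) sum of two bounded continuous convex functions are again bounded continuous convex, so that $\RegularConv{\BELIEF}$ is stable under both operations, while~$-\infty$ supplies the neutral element; this makes it a sub-dioid --- though not a complete one, as suprema of bounded functions need not stay bounded. The only genuine obstacle across all three parts is the careful management of the $\pm\infty$ interactions under the lower addition, both in the distributivity identity and in the $\otimes$-closure of~$\Conv{\BELIEF}$; the remaining verifications are routine.
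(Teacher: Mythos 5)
Your proposal is correct and follows essentially the same route as the paper: verify the dioid axioms on the ambient space \( \barRR^{\RR^{\NATURE}} \) first (where the paper invokes Moreau's identity \( \fonctionun \LowPlus \sup_{i}\fonctionun_i = \sup_{i}\np{\fonctionun \LowPlus \fonctionun_i} \), you inline the equivalent pointwise case analysis), then check that the two subclasses are stable under the operations and contain the neutral and unit elements. One caution: your blanket claim that ``the lower sum of two convex \lsc\ functions is convex and \lsc'' is false for improper convex functions --- the paper's own valley-function example gives \( v_A \LowPlus v_B = v_{A\cup B} \), generally not convex --- but your argument survives because in \( \Conv{\BELIEF} \) the only improper members are the two constant functions, which your case split treats explicitly (for the remaining proper members, \( \LowPlus \) coincides with the ordinary sum, which preserves convexity and lower semicontinuity). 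Also be aware that leaning on Item~\ref{it:isomorphism_continuous_functions} of Proposition~\ref{pr:isomorphism_continuous} for the third part would be circular, since the paper derives that item \emph{from} the present proposition; your direct stability argument for \( \RegularConv{\BELIEF}\cup\na{-\infty} \) makes that reference dispensable.
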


\begin{proof}
  It is easy to establish that \( \np{ \barRR^{\RR^{\NATURE}}, \oplus, \otimes } \)
  is a commutative and associative algebra.
  The neutral (or zero) element (for~$\oplus$) is \( \epsilon=-\infty \),
  the function identically equal to~\( -\infty \),
  which is an absorbing element for~$\otimes$
  (as it is absorbing for the~$\LowPlus$ operation in~\eqref{eq:two_operations_functions_otimes}).
The unit element (for~$\otimes$) is \( e=0 \), the function identically equal to~\( 0 \).
The operation~$\oplus$ is idempotent.
There remains to show that $\otimes$ is distributive \wrt~$\oplus$.
For this purpose, we consider a family \( \sequence{\fonctionun_i}{i\in I} \)
with \( \fonctionun_i \colon \RR^{\NATURE}\to\barRR \), for all \( i\in I \),
and \( \fonctionun \colon \RR^{\NATURE}\to\barRR \).
As \( \fonctionun \LowPlus \sup_{i\in I}\fonctionun_i 
= \sup_{i\in I}\np{\fonctionun \LowPlus \fonctionun_i} \)
by property of the Moreau lower addition~$\LowPlus$
\wrt\ the supremum operation \cite[Equation~(4.4)]{Moreau:1970}, we get that 
\begin{equation*}
  \fonctionun \otimes \np{\oplus_{i\in I}\fonctionun_i}
  = \fonctionun \LowPlus \sup_{i\in I}\fonctionun_i 
  = \sup_{i\in I}\np{\fonctionun \LowPlus \fonctionun_i}
= \oplus_{i\in I}\np{\fonctionun \otimes \fonctionun_i }
 \eqfinp 
\end{equation*}
Hence, \( \np{ \barRR^{\RR^{\NATURE}}, \oplus, \otimes } \) satisfies the axioms of a dioid.
The dioid is complete, as all the axioms hold true with
the supremum of finite or infinite family of functions. 
In addition, the function identically equal to~\( +\infty \) is an absorbing element for~$\oplus$.
\medskip

\( \bp{ \Conv{\BELIEF}, \oplus, \otimes } \) is a \emph{subdioid}
\cite[Definition~4.19]{Baccelli-Cohen-Olsder-Quadrat:1992}, 
because \( \Conv{\BELIEF} \) contains the neutral (or zero) element~\( \epsilon=-\infty \)
and the unit element~\( e=0 \), and is closed under the~$\oplus$ operation~\eqref{eq:two_operations_functions_oplus},
as closed convex functions are stable by the supremum operation.
The situation is more delicate with the~$\otimes$ operation~\eqref{eq:two_operations_functions_otimes}:
\lsc\ functions are stable by the~$\LowPlus$ addition (but not by the~$\UppPlus$ addition \cite[p.~22]{Moreau:1966-1967}),
whereas convex functions are stable by the~$\UppPlus$ addition (but not by the~$\LowPlus$ addition\footnote{%
  A valley function~$v_A$ takes the value $-\infty$ on~$A \subset \RR^{\NATURE}$ 
  and~$+\infty$   outside of~$A$ \cite{Penot:2000}. Then, if $A$ and $B$ are convex subsets of~$\RR^{\NATURE}$,
  \( v_A \UppPlus v_B =v_{A\cap B} \) is a convex function, whereas
  \( v_A \LowPlus v_B =v_{A\cup B} \) is generally not a convex function.}).
However, closed convex functions are the functions~$-\infty$ and~$+\infty$, united with 
proper convex \lsc\ functions. As a consequence, we get that
(i) on \( \Conv{\BELIEF}\setminus\na{-\infty} \)
the~$\UppPlus$ and~$\LowPlus$ additions give the same result as proper functions do not take the value $-\infty$,
(ii) the~$\LowPlus$ addition of the function~$-\infty$ with any function in~\( \Conv{\BELIEF}\) gives~$-\infty$.
Therefore, we have shown that closed convex functions are stable by the~$\LowPlus$ addition.
The dioid \( \bp{ \Conv{\BELIEF}, \oplus, \otimes } \) is complete,
as the supremum of finite or infinite family of functions in~\( \Conv{\BELIEF} \)
belongs to~\( \Conv{\BELIEF} \).
\medskip

\( \bp{ \RegularConv{\BELIEF} \cup \na{-\infty}, \oplus, \otimes } \) is a subdioid,
because \( \RegularConv{\BELIEF} \cup \na{-\infty} \) contains the neutral (or zero) element~\( \epsilon=-\infty \)
and the unit element~\( e=0 \), and is closed under the two operations~\eqref{eq:two_operations_functions}
(here, the operation~$\LowPlus$ can be replaced by the ordinary~$+$ as functions in
\( \RegularConv{\BELIEF} \) take finite values).
The dioid \( \bp{ \RegularConv{\BELIEF} \cup \na{-\infty}, \oplus, \otimes } \) is not complete,
as the supremum of an infinite family of functions in~\( \RegularConv{\BELIEF} \)
may take the value~$+\infty$.
\medskip

This ends the proof.
\end{proof}

\subsubsection{Dioids of \ActionPayoffSet s}
\label{Dioids_of_ActionPayoffSet_s}

We equip the set~\( 2^{\RR^{\NATURE}} \) of subsets of~\( \RR^{\NATURE} \)
with the two operations~\( \oplus, \otimes \) given by
  \begin{subequations}
    \begin{align}
      \oplus_{i\in I}\ACTION_i
      &=
        \closedconvexhull\np{\cup_{i\in I}\ACTION_i} 
\eqsepv \forall \sequence{\ACTION_i}{i\in I} \subset 2^{\RR^{\NATURE}} 
        \eqfinv
        \label{eq:two_operations_subsets_oplus}
      \\
      \ACTION \otimes \ACTIONbis
      &=
 \closedconvexhull\np{{\ACTION + \ACTIONbis}}       
  \eqsepv \forall \ACTION, \ACTIONbis \in 2^{\RR^{\NATURE}} 
  \eqfinp
\label{eq:two_operations_subsets_otimes}      
    \end{align}
\label{eq:two_operations_subsets}
Notice that the two operations always yield closed convex sets\footnote{%
  By contrast, the corresponding operations~\eqref{eq:two_operations_functions} on functions do not
  necessarily yield convex functions.}
  and that,
    when \(  \ACTION, \ACTIONbis \) are convex sets, the operation~\eqref{eq:two_operations_subsets_otimes}
    has the alternative expression\footnote{%
    Indeed, the set \( \ACTION + \ACTIONbis \) is convex, as the sum of two convex sets,
hence \(  \closedconvexhull\np{{\ACTION + \ACTIONbis}}=
\overline{\convexhull\np{{\ACTION + \ACTIONbis}}}=
\overline{\ACTION + \ACTIONbis} \), where we have used that
\( \closedconvexhull\Primal=\overline{\convexhull\Primal} \) \
for any subset \( \Primal\subset\RR^{\NATURE} \)
\cite[Proposition~3.46]{Bauschke-Combettes:2017}. 
}
    \begin{equation}
      \ACTION \otimes \ACTIONbis = \overline{\ACTION + \ACTIONbis}
      \eqsepv \forall \ACTION, \ACTIONbis \textrm{ convex } \in 2^{\RR^{\NATURE}}
      \eqfinp 
      \label{eq:two_operations_subsets_otimes_alternative_expression}
    \end{equation}
\end{subequations}


\subsubsubsection{Closed convex comprehensive sets}

\begin{proposition}
  \label{pr:EffectiveComprehensiveSet}
  Let \( \ACTION \subset \RR^{\NATURE} \). The following statements are equivalent.
  \begin{enumerate}
  \item
\label{it:closed_convex_EffectiveComprehensiveSet}
\( \ACTION \) is a closed convex comprehensive set, that is,
\( \ACTION \) is a closed convex set and satisfies (see Footnote~\ref{ft:comprehensive_set})
    \( \RR_{-}^{\NATURE}+{\ACTION} \subset {\ACTION} \)
  (or, equivalently, \( \RR_{-}^{\NATURE}+{\ACTION} = {\ACTION} \)).
\item
  \label{it:EffectiveComprehensiveSet_otimes_ACTION}
  \( \ACTION = \RR_{-}^{\NATURE} \otimes \ACTION \).
\item
  \label{it:EffectiveComprehensiveSet_otimes_ACTIONbis}
  There exists \( \ACTIONbis \subset \RR^{\NATURE} \) such that 
  \( \ACTION = \RR_{-}^{\NATURE} \otimes \ACTIONbis \).
  \end{enumerate}
\end{proposition}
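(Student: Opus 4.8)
The plan is to establish the cycle of implications
$\ref{it:closed_convex_EffectiveComprehensiveSet} \Rightarrow \ref{it:EffectiveComprehensiveSet_otimes_ACTION} \Rightarrow \ref{it:EffectiveComprehensiveSet_otimes_ACTIONbis} \Rightarrow \ref{it:closed_convex_EffectiveComprehensiveSet}$, which is the most economical route. For $\ref{it:closed_convex_EffectiveComprehensiveSet} \Rightarrow \ref{it:EffectiveComprehensiveSet_otimes_ACTION}$, I would first note that comprehensiveness gives $\RR_{-}^{\NATURE}+\ACTION \subseteq \ACTION$, while $0\in\RR_{-}^{\NATURE}$ gives the reverse inclusion $\ACTION = 0+\ACTION \subseteq \RR_{-}^{\NATURE}+\ACTION$, hence $\RR_{-}^{\NATURE}+\ACTION = \ACTION$. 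Since $\ACTION$ is already closed and convex, $\closedconvexhull\np{\RR_{-}^{\NATURE}+\ACTION} = \closedconvexhull\np{\ACTION} = \ACTION$, which by the definition~\eqref{eq:two_operations_subsets_otimes} of $\otimes$ says exactly $\RR_{-}^{\NATURE}\otimes\ACTION = \ACTION$. The implication $\ref{it:EffectiveComprehensiveSet_otimes_ACTION} \Rightarrow \ref{it:EffectiveComprehensiveSet_otimes_ACTIONbis}$ is immediate by taking $\ACTIONbis=\ACTION$.

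The substance lies in $\ref{it:EffectiveComprehensiveSet_otimes_ACTIONbis} \Rightarrow \ref{it:closed_convex_EffectiveComprehensiveSet}$. Writing $\ACTION = \RR_{-}^{\NATURE}\otimes\ACTIONbis = \closedconvexhull\np{\RR_{-}^{\NATURE}+\ACTIONbis}$, the set $\ACTION$ is closed convex by construction, so only comprehensiveness remains to be checked. I would set $B=\RR_{-}^{\NATURE}+\ACTIONbis$ and proceed in two stages. First, $B$ is itself comprehensive, since $\RR_{-}^{\NATURE}+B = \np{\RR_{-}^{\NATURE}+\RR_{-}^{\NATURE}}+\ACTIONbis = \RR_{-}^{\NATURE}+\ACTIONbis = B$, using that the nonpositive orthant is a convex cone containing $0$, so that $\RR_{-}^{\NATURE}+\RR_{-}^{\NATURE}=\RR_{-}^{\NATURE}$.

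Second, I would show that both the convex hull and the topological closure preserve comprehensiveness. For the convex hull, the Minkowski-additivity~\eqref{eq:convexhull_Minkowski_sum} together with the convexity of $\RR_{-}^{\NATURE}$ gives $\convexhull\np{\RR_{-}^{\NATURE}+B} = \RR_{-}^{\NATURE}+\convexhull\np{B}$; applying $\convexhull$ to the inclusion $\RR_{-}^{\NATURE}+B \subseteq B$ then yields $\RR_{-}^{\NATURE}+\convexhull\np{B} \subseteq \convexhull\np{B}$. For the closure, I would use that comprehensiveness is equivalent to invariance under every translation by $r\in\RR_{-}^{\NATURE}$: for fixed $r$, the translate satisfies $r+\overline{\convexhull\np{B}} = \overline{r+\convexhull\np{B}} \subseteq \overline{\convexhull\np{B}}$, and taking the union over all $r\in\RR_{-}^{\NATURE}$ gives comprehensiveness of $\overline{\convexhull\np{B}} = \closedconvexhull\np{B} = \ACTION$.

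The main obstacle is this last implication, and it is a matter of care rather than depth: one must verify that comprehensiveness survives both convexification and closure. The convex-hull step rests crucially on the additivity~\eqref{eq:convexhull_Minkowski_sum} of the convex hull under Minkowski sums, while the closure step rests on reading comprehensiveness as invariance under a family of translations, each of which is a homeomorphism of $\RR^{\NATURE}$.
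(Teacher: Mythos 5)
Your proof is correct and follows essentially the same route as the paper's: the cycle of implications $\ref{it:closed_convex_EffectiveComprehensiveSet} \Rightarrow \ref{it:EffectiveComprehensiveSet_otimes_ACTION} \Rightarrow \ref{it:EffectiveComprehensiveSet_otimes_ACTIONbis} \Rightarrow \ref{it:closed_convex_EffectiveComprehensiveSet}$, with the first two steps immediate and the work concentrated in showing that $\closedconvexhull\np{\RR_{-}^{\NATURE}+\ACTIONbis}$ is comprehensive. If anything, your treatment of the last implication is slightly more careful than the paper's, which invokes the identity $\ACTION \otimes \ACTIONbis = \overline{\ACTION+\ACTIONbis}$ under the tacit assumption that $\ACTIONbis$ is convex (not guaranteed by the third statement), whereas your appeal to the Minkowski additivity of the convex hull in~\eqref{eq:convexhull_Minkowski_sum} handles an arbitrary $\ACTIONbis$ directly.
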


\begin{proof}
Let \( \ACTION \subset \RR^{\NATURE} \).

\noindent$\bullet$
Suppose that \( \ACTION \) is a closed convex comprehensive set (Item~\ref{it:closed_convex_EffectiveComprehensiveSet}).
By Footnote~\ref{ft:comprehensive_set}, we have that \( \RR_{-}^{\NATURE} + \ACTION =  \ACTION \),
hence \( \overline{\RR_{-}^{\NATURE} + \ACTION} = \overline{\ACTION} = \ACTION \)
since the set~\( \ACTION \) is closed by assumption.
By~\eqref{eq:two_operations_subsets_otimes_alternative_expression},
as both \( \RR_{-}^{\NATURE} \) and \( \ACTION \) are convex sets,
we conclude that \( \RR_{-}^{\NATURE} \otimes \ACTION = \overline{\RR_{-}^{\NATURE} + \ACTION} = \ACTION \),
which is Item~\ref{it:EffectiveComprehensiveSet_otimes_ACTION}.
 \bigskip 

 \noindent$\bullet$
 It is obvious that Item~\ref{it:EffectiveComprehensiveSet_otimes_ACTION} implies
 Item~\ref{it:EffectiveComprehensiveSet_otimes_ACTIONbis}.
  \bigskip 

  \noindent$\bullet$
  Suppose that there exists \( \ACTIONbis \subset \RR^{\NATURE} \) such that 
  \( \ACTION = \RR_{-}^{\NATURE} \otimes \ACTIONbis \) (Item~\ref{it:EffectiveComprehensiveSet_otimes_ACTIONbis}).
  By~\eqref{eq:two_operations_subsets_otimes}, the set \( \RR_{-}^{\NATURE} \otimes \ACTIONbis \)
  is closed convex, hence so is~\( \ACTION \).
  By~\eqref{eq:two_operations_subsets_otimes_alternative_expression},
as both \( \RR_{-}^{\NATURE} \) and \( \ACTIONbis \) are convex sets,
we have that \( \ACTION=\RR_{-}^{\NATURE} \otimes \ACTIONbis = \overline{\RR_{-}^{\NATURE} + \ACTIONbis} \),
Thus, we get that \( \RR_{-}^{\NATURE} + \ACTION = \RR_{-}^{\NATURE} + \overline{\RR_{-}^{\NATURE} + \ACTIONbis} \),
where it is easy to obtain that \( \RR_{-}^{\NATURE} + \overline{\RR_{-}^{\NATURE} + \ACTIONbis}
= \overline{\RR_{-}^{\NATURE} + \ACTIONbis} \).
As a consequence, we have that \( \RR_{-}^{\NATURE} + \ACTION =
\overline{\RR_{-}^{\NATURE} + \ACTIONbis} = \ACTION \),
hence \( \ACTION \) is a comprehensive set.
We have proved Item~\ref{it:closed_convex_EffectiveComprehensiveSet}.

\end{proof}

\begin{definition}
\label{de:action_set} 
We say that \( \ACTION \subset \RR^{\NATURE} \) is a
\emph{\ActionPayoffSet} (on~$\NATURE$) if it satisfies any of the three
equivalent statements of Proposition~\ref{pr:EffectiveComprehensiveSet}.
      We denote by \( \ACTIONPAYOFFSET \subset 2^{\RR^{\NATURE}} \) the set of all
      \ActionPayoffSet s on~$\NATURE$, united with the empty set~$\emptyset$.
\end{definition}

\subsubsection{Inverse homomorphisms between dioids}
\label{Inverse_homomorphisms_between_dioids}

\begin{proposition}
  \label{pr:isomorphism} 
  With the two operations~\eqref{eq:two_operations_subsets}, 
  \( \bp{ \ACTIONPAYOFFSET, \oplus, \otimes } \) is a complete 
  dioid, 
with neutral (or zero) element~\( \emptyset \) and unit element~\( \RR_{-}^{\NATURE} \).  
The following mappings, from sets to functions
 \begin{subequations}
  \begin{align}
    \FromACTIONPAYOFFSETtoConvBELIEF \colon 
    \ACTIONPAYOFFSET \to \Conv{\BELIEF} \eqsepv
    &
      \ACTION \mapsto \RestrictedSupportFunction{\ACTION}
        \label{eq:isomorphism_a}
    \intertext{and from functions to sets\footnotemark}
        \FromConvBELIEFtoACTIONPAYOFFSET \colon 
    \Conv{\BELIEF} \to \ACTIONPAYOFFSET \eqsepv
    &
      \fonctionun \mapsto \InverseRestrictedSupportFunction{\fonctionun}=
      \defset{\primal\in \RR^{\NATURE}}{ \scalpro{\belief}{\primal}
    \leq \fonctionun\np{\belief} \eqsepv \forall \belief\in\BELIEF }
      \label{eq:isomorphism_b}
  \end{align}
  \label{eq:isomorphism}
  \footnotetext{See Footnote~\ref{ft:isomorphism_b_continuous_def}.}
\end{subequations}
are 
\lsc\ homormorphisms between the dioids 
\( \bp{ \ACTIONPAYOFFSET, \oplus, \otimes } \) and \( \bp{ \Conv{\BELIEF}, \oplus, \otimes } \),
inverse one to the other by 
\begin{subequations}
  \begin{align}
    \np{\FromConvBELIEFtoACTIONPAYOFFSET \circ \FromACTIONPAYOFFSETtoConvBELIEF}\np{\ACTION}
    =\InverseRestrictedSupportFunction{\RestrictedSupportFunction{\ACTION}}
    &=
      \ACTION
    \eqsepv \forall \ACTION\in\ACTIONPAYOFFSET 
    \eqfinv
   \label{eq:one-to-one_isomorphisms_a}    
    \\
\np{\FromACTIONPAYOFFSETtoConvBELIEF \circ \FromConvBELIEFtoACTIONPAYOFFSET}\np{\fonctionun}
=    \RestrictedSupportFunction{%
    \InverseRestrictedSupportFunction{\fonctionun}}
    &=
      \fonctionun \eqsepv
      \forall \fonctionun\in\Conv{\BELIEF}
      \eqfinp 
   \label{eq:one-to-one_isomorphisms_b}    
  \end{align}
   \label{eq:one-to-one_isomorphisms}    
 \end{subequations}
%
 %
\end{proposition}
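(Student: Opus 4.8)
The plan is to transport the complete dioid structure that Proposition~\ref{pr:dioids_of_value_functions_over_beliefs} establishes on \( \bp{\Conv{\BELIEF},\oplus,\otimes} \) to \( \bp{\ACTIONPAYOFFSET,\oplus,\otimes} \) through the pair of mappings \( \FromACTIONPAYOFFSETtoConvBELIEF \) and \( \FromConvBELIEFtoACTIONPAYOFFSET \). Concretely, I would carry out three tasks: (i) show that \( \FromACTIONPAYOFFSETtoConvBELIEF \) and \( \FromConvBELIEFtoACTIONPAYOFFSET \) are mutually inverse bijections, i.e.\ prove the identities~\eqref{eq:one-to-one_isomorphisms}; (ii) show that \( \FromACTIONPAYOFFSETtoConvBELIEF \) intertwines the two pairs of operations and sends \( \emptyset \) to \( -\infty \) and \( \RR_{-}^{\NATURE} \) to \( 0 \), so that it is an \lsc{} homomorphism; and (iii) conclude, since a bijection that preserves the operations transports every dioid axiom, that \( \bp{\ACTIONPAYOFFSET,\oplus,\otimes} \) is a complete dioid with neutral element \( \emptyset=\FromConvBELIEFtoACTIONPAYOFFSET\np{-\infty} \) and unit \( \RR_{-}^{\NATURE}=\FromConvBELIEFtoACTIONPAYOFFSET\np{0} \).

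For task~(i) I would lean on Proposition~\ref{pr:positively_homogeneous_extension}. To prove~\eqref{eq:one-to-one_isomorphisms_a}, note that for \( \ACTION\in\ACTIONPAYOFFSET \) Item~\ref{it:positively_homogeneous_extension_comprehensive_set} gives \( \widehat{\RestrictedSupportFunction{\ACTION}}=\SupportFunction{\ACTION} \); hence the reformulation~\eqref{eq:positively_homogeneous_extension_InverseRestrictedSupportFunction} rewrites \( \InverseRestrictedSupportFunction{\RestrictedSupportFunction{\ACTION}} \) as the set of \( \primal \) with \( \scalpro{\signed}{\primal}\leq\SupportFunction{\ACTION}\np{\signed} \) for all \( \signed\in\RR^{\NATURE} \), which is exactly the half-space representation of the nonempty closed convex set \( \ACTION \); the case \( \ACTION=\emptyset \) (where \( \RestrictedSupportFunction{\emptyset}\equiv-\infty \) and \( \InverseRestrictedSupportFunction{-\infty}=\emptyset \)) is immediate. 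For~\eqref{eq:one-to-one_isomorphisms_b}, Item~\ref{it:positively_homogeneous_extension_Conv} gives \( \widehat{\fonctionun}=\SupportFunction{\InverseRestrictedSupportFunction{\fonctionun}} \) for \( \fonctionun\in\Conv{\BELIEF} \); restricting both sides to \( \BELIEF \) and using~\eqref{eq:positively_homogeneous_extension_coincide} yields \( \fonctionun=\RestrictedSupportFunction{\InverseRestrictedSupportFunction{\fonctionun}} \). Along the way one checks that \( \FromConvBELIEFtoACTIONPAYOFFSET \) indeed lands in \( \ACTIONPAYOFFSET \): the set \( \InverseRestrictedSupportFunction{\fonctionun} \) is an intersection of half-spaces whose normals lie in \( \BELIEF\subset\RR_{+}^{\NATURE} \), so it is automatically closed, convex and comprehensive.

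For task~(ii) the operation-preservation is a direct consequence of the support-function calculus recalled in the Appendix. Restricting~\eqref{eq:support_function_closedconvexhull} and~\eqref{eq:support_function_union_sup} to \( \BELIEF \) gives \( \RestrictedSupportFunction{\ACTION\oplus\ACTIONbis}=\RestrictedSupportFunction{\closedconvexhull\np{\ACTION\cup\ACTIONbis}}=\sup\na{\RestrictedSupportFunction{\ACTION},\RestrictedSupportFunction{\ACTIONbis}}=\RestrictedSupportFunction{\ACTION}\oplus\RestrictedSupportFunction{\ACTIONbis} \), and the same computation for an arbitrary (in particular infinite) family shows simultaneously that \( \FromACTIONPAYOFFSETtoConvBELIEF \) respects \( \oplus \) and is \lsc. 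Likewise, combining~\eqref{eq:support_function_closedconvexhull} with~\eqref{eq:support_function_Minkowski} gives \( \RestrictedSupportFunction{\ACTION\otimes\ACTIONbis}=\RestrictedSupportFunction{\ACTION+\ACTIONbis}=\RestrictedSupportFunction{\ACTION}\LowPlus\RestrictedSupportFunction{\ACTIONbis}=\RestrictedSupportFunction{\ACTION}\otimes\RestrictedSupportFunction{\ACTIONbis} \), where the lower addition \( \LowPlus \) is precisely what absorbs the degenerate case \( \RestrictedSupportFunction{\emptyset}\equiv-\infty \). Finally \( \RestrictedSupportFunction{\emptyset}\equiv-\infty \) and \( \RestrictedSupportFunction{\RR_{-}^{\NATURE}}=0 \) identify the images of the neutral and unit elements.

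For task~(iii), once \( \FromACTIONPAYOFFSETtoConvBELIEF \) is a bijection satisfying \( \FromACTIONPAYOFFSETtoConvBELIEF\np{\ACTION\oplus\ACTIONbis}=\FromACTIONPAYOFFSETtoConvBELIEF\np{\ACTION}\oplus\FromACTIONPAYOFFSETtoConvBELIEF\np{\ACTIONbis} \) and the analogue for \( \otimes \), each dioid axiom on \( \Conv{\BELIEF} \) pulls back: one applies \( \FromConvBELIEFtoACTIONPAYOFFSET \) to the already-known identity on functions and uses injectivity of \( \FromACTIONPAYOFFSETtoConvBELIEF \) to deduce associativity, commutativity of \( \oplus \), idempotency, distributivity, and completeness on \( \ACTIONPAYOFFSET \). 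In particular the operations are internal to \( \ACTIONPAYOFFSET \), since \( \ACTION\oplus\ACTIONbis=\FromConvBELIEFtoACTIONPAYOFFSET\bp{\RestrictedSupportFunction{\ACTION}\oplus\RestrictedSupportFunction{\ACTIONbis}}\in\ACTIONPAYOFFSET \) and similarly for \( \otimes \). I expect the main obstacle to be task~(i), the bijectivity: it is the only step that genuinely uses the duality between a closed convex comprehensive set and its support-function restriction, and it requires the careful bookkeeping of Proposition~\ref{pr:positively_homogeneous_extension} to pass between functions on \( \BELIEF \) and positively homogeneous functions on \( \RR^{\NATURE} \), together with attentive handling of the two degenerate elements \( \emptyset \) and \( -\infty \) throughout.
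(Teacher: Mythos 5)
Your proposal is correct and follows essentially the same route as the paper: establish the two identities in~\eqref{eq:one-to-one_isomorphisms} via the positively homogeneous extension of Proposition~\ref{pr:positively_homogeneous_extension}, verify that \( \FromACTIONPAYOFFSETtoConvBELIEF \) sends (infinite) sums to sums and products to products using~\eqref{eq:support_function_closedconvexhull}, \eqref{eq:support_function_union_sup} and~\eqref{eq:support_function_Minkowski}, and then transport the complete dioid structure from \( \bp{\Conv{\BELIEF},\oplus,\otimes} \) back to \( \bp{\ACTIONPAYOFFSET,\oplus,\otimes} \) rather than checking associativity and distributivity directly. The paper's proof is organized in exactly these steps and even remarks that this ``reverse'' transport is the point of the argument.
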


%



\begin{proof}
First, we are going to show that the mappings~\eqref{eq:isomorphism}
are well defined, and in one-to-one correspondence by~\eqref{eq:one-to-one_isomorphisms}.

Let \( \ACTION\in\ACTIONPAYOFFSET \).
By the background on convex analysis in~\S\ref{Duality_between_payoffs_and_beliefs},
we know that \( \RestrictedSupportFunction{\ACTION}\in\Conv{\BELIEF} \),
hence the mapping~\eqref{eq:isomorphism_a} is well defined.
Now, we use the mapping \( \barRR^{\BELIEF} \ni {\fonctionun} \mapsto \widehat{\fonctionun} \in \barRR^{\RR^{\NATURE}} \),
as defined in Proposition~\ref{pr:positively_homogeneous_extension} in Appendix, to show 
that~\eqref{eq:one-to-one_isomorphisms_a} holds true.
Indeed, we have that
\begin{align*}
  \InverseRestrictedSupportFunction{\RestrictedSupportFunction{\ACTION}}
  &=
     \defset{\primal\in \RR^{\NATURE}}{ \scalpro{\belief}{\primal}
    \leq \RestrictedSupportFunction{\ACTION}\np{\belief} \eqsepv \forall \belief\in\BELIEF }    
\tag{by~\eqref{eq:isomorphism_b}}
  \\ 
  &=
\defset{\primal\in \RR^{\NATURE}}{ \scalpro{\belief}{\primal}
    \leq \SupportFunction{\ACTION}\np{\signed} \eqsepv \forall \signed\in \RR^{\NATURE}} 
    \intertext{by~\eqref{eq:positively_homogeneous_extension_InverseRestrictedSupportFunction}
    and by~\eqref{eq:positively_homogeneous_extension_comprehensive_set}
    in Proposition~\ref{pr:positively_homogeneous_extension},
    giving that \( \widehat{\RestrictedSupportFunction{\ACTION}}=\SupportFunction{\ACTION} \)}
  &=    
    \ACTION
    \tag{by \cite[Chapter~V, Theorem~3.1.1]{Hiriart-Urruty-Lemarechal-I:1993}, 
    since \( \ACTION \) is closed convex}
    \eqfinp
\end{align*}
    
Let \( \fonctionun\in\Conv{\BELIEF} \).
The subset \( \InverseRestrictedSupportFunction{\fonctionun} \) in~\eqref{eq:isomorphism_b}
is the intersection of half-spaces, and so is closed convex.
We are going to show that it is also a comprehensive set.
Indeed, \( \InverseRestrictedSupportFunction{\fonctionun} + \RR_{-}^{\NATURE}
\subset \InverseRestrictedSupportFunction{\fonctionun} \) because
\( \scalpro{\belief}{\primal+\primalbis} \leq \scalpro{\belief}{\primal}+0
=\scalpro{\belief}{\primal} \)
for any \( \primalbis\in \RR_{-}^{\NATURE} \) and for any \( \belief\in\BELIEF \).
As \( 0\in\RR_{-}^{\NATURE} \), we deduce that \( \InverseRestrictedSupportFunction{\fonctionun}
\subset \InverseRestrictedSupportFunction{\fonctionun} + \RR_{-}^{\NATURE} \), hence that
\( \InverseRestrictedSupportFunction{\fonctionun} + \RR_{-}^{\NATURE}
= \InverseRestrictedSupportFunction{\fonctionun} \).
Finally, we get that
\begin{align*}
\InverseRestrictedSupportFunction{\fonctionun}\otimes \RR_{-}^{\NATURE} 
  &=
    \overline{\InverseRestrictedSupportFunction{\fonctionun}+\RR_{-}^{\NATURE}}
    \tag{by~\eqref{eq:two_operations_subsets_otimes_alternative_expression} as both
    \(\InverseRestrictedSupportFunction{\fonctionun}\) and \(\RR_{-}^{\NATURE}\) are convex} 
  \\
  &=
\overline{\InverseRestrictedSupportFunction{\fonctionun}}
\tag{as we have just proved that \( \InverseRestrictedSupportFunction{\fonctionun} + \RR_{-}^{\NATURE}
= \InverseRestrictedSupportFunction{\fonctionun} \)}  
  \\
  &=
\InverseRestrictedSupportFunction{\fonctionun}
    \tag{as we have just seen that \( \InverseRestrictedSupportFunction{\fonctionun} \) is closed}
    \eqfinp 
\end{align*}
Finally, we have obtained that \( \InverseRestrictedSupportFunction{\fonctionun}\) 
is a closed convex comprehensive set.
So, we get that \( \InverseRestrictedSupportFunction{\fonctionun}\in\ACTIONPAYOFFSET \),
by Definition~\ref{de:action_set}, 
hence the mapping~\eqref{eq:isomorphism_b} is well defined.

We also have that~\eqref{eq:one-to-one_isomorphisms_b} holds true.
Indeed, on the one hand, we have that 
\( \widehat{\RestrictedSupportFunction{%
    \InverseRestrictedSupportFunction{\fonctionun}}}=
\SupportFunction{%
  \InverseRestrictedSupportFunction{\fonctionun}} \)
by~\eqref{eq:positively_homogeneous_extension_comprehensive_set},
as we have just shown that
\( \InverseRestrictedSupportFunction{\fonctionun}\in\ACTIONPAYOFFSET \).
On the other hand, we have that 
\( \widehat{\fonctionun} = \SupportFunction{\InverseRestrictedSupportFunction{\fonctionun}} \)
by~\eqref{eq:positively_homogeneous_extension_Conv},
as \( \fonctionun \in \Conv{\BELIEF} \) by assumption.
Thus, we get that \( \widehat{\RestrictedSupportFunction{%
    \InverseRestrictedSupportFunction{\fonctionun}}}=
\widehat{\fonctionun} \), from which we deduce that
\( \RestrictedSupportFunction{%
    \InverseRestrictedSupportFunction{\fonctionun}}
=      \fonctionun \) from the very definition of 
the {positively homogeneous extension} of a function \( \BELIEF \to \barRR \)
in Proposition~\ref{pr:positively_homogeneous_extension}. 
\medskip

Second, we are going to show that the mapping~\( \FromACTIONPAYOFFSETtoConvBELIEF \) in~\eqref{eq:isomorphism_a} 
sends infinite sums to infinite sums and products to products. 
This results from the following formulas
\begin{subequations}
  \begin{align}
\RestrictedSupportFunction{\oplus_{i\in I}\ACTION_i}
    &=
      \oplus_{i\in I}\RestrictedSupportFunction{\ACTION_i}
      \eqsepv \forall \sequence{\ACTION_i}{i\in I} \subset \RR^{\NATURE}
      \eqfinv
      \label{eq:RestrictedSupportFunction_homomorphism_oplus}
    \\
\RestrictedSupportFunction{\ACTION\otimes\ACTIONbis}
    &=
      \RestrictedSupportFunction{\ACTION}\otimes\RestrictedSupportFunction{\ACTIONbis} 
      \eqsepv \forall \ACTION, \ACTIONbis \in \RR^{\NATURE} 
      \eqfinv
      \label{eq:RestrictedSupportFunction_homomorphism_otimes}      
  \end{align}
\label{eq:RestrictedSupportFunction_homomorphism}
\end{subequations}
that we prove now. 
%
For any family \( \sequence{\ACTION_i}{i\in I} \subset \RR^{\NATURE} \)
(it is not necessary that \( \sequence{\ACTION_i}{i\in I} \subset \ACTIONPAYOFFSET \)), we have that
\begin{align*}
\SupportFunction{\oplus_{i\in I}\ACTION_i}
  &=
    \SupportFunction{\closedconvexhull\np{\cup_{i\in I}\ACTION_i}}
    \tag{by definition~\eqref{eq:two_operations_subsets_oplus} of \( \oplus_{i\in I}\ACTION_i \)}
    \\
  &=
    \SupportFunction{\cup_{i\in I}\ACTION_i}
   \tag{by~\eqref{eq:support_function_closedconvexhull}}
  \\
  &=
    \sup_{i\in I}\SupportFunction{\ACTION_I}
    \tag{by~\eqref{eq:support_function_union_sup}}
\\    
  &=
    \oplus_{i\in I}\SupportFunction{\ACTION_i}
    \eqfinp
    \tag{by definition~\eqref{eq:two_operations_functions_oplus}
    of \( \oplus_{i\in I}\SupportFunction{\ACTION_i}\)}
\end{align*}
By~\eqref{eq:support_function_restriction}, we deduce that
\( \RestrictedSupportFunction{\oplus_{i\in I}\ACTION_i}
= \oplus_{i\in I}\RestrictedSupportFunction{\ACTION_i} \),
hence~\eqref{eq:RestrictedSupportFunction_homomorphism_oplus}.
For any \( \ACTION, \ACTIONbis \subset \RR^{\NATURE} \)
(it is not necessary that \( \ACTION, \ACTIONbis \in \ACTIONPAYOFFSET \)), we have that
\begin{align*}
\SupportFunction{\ACTION\otimes\ACTIONbis}
  &=
    \SupportFunction{ \closedconvexhull\np{{\ACTION + \ACTIONbis}} }
    \tag{by definition~\eqref{eq:two_operations_subsets_otimes}      
    of \( \ACTION\otimes\ACTIONbis \)}
\\    
  &=
    \SupportFunction{\ACTION+\ACTIONbis}
   \tag{by~\eqref{eq:support_function_closedconvexhull}}
  \\
  &=    
    \SupportFunction{\ACTION}  \LowPlus \SupportFunction{\ACTIONbis}
    \tag{by~\eqref{eq:support_function_Minkowski}}
  \\
  &=    
 \SupportFunction{\ACTION}\otimes\SupportFunction{\ACTIONbis}
    \eqfinp
    \tag{by definition~\eqref{eq:two_operations_functions_otimes} of
    \( \SupportFunction{\ACTION}\otimes\SupportFunction{\ACTIONbis} \)}
\end{align*}
By~\eqref{eq:support_function_restriction}, we deduce that 
\( \RestrictedSupportFunction{\ACTION\otimes\ACTIONbis}
= \RestrictedSupportFunction{\ACTION}\otimes\RestrictedSupportFunction{\ACTIONbis} \),
hence~\eqref{eq:RestrictedSupportFunction_homomorphism_otimes}.
From~\eqref{eq:RestrictedSupportFunction_homomorphism}, we get that
\( \FromACTIONPAYOFFSETtoConvBELIEF\np{\oplus_{i\in I}\ACTION_i}
= \oplus_{i\in I}\FromACTIONPAYOFFSETtoConvBELIEF\np{\ACTION_i} \)
and that
\( \FromACTIONPAYOFFSETtoConvBELIEF\np{\ACTION\otimes\ACTIONbis}
=\FromACTIONPAYOFFSETtoConvBELIEF\np{\ACTION}\otimes\FromACTIONPAYOFFSETtoConvBELIEF\np{\ACTIONbis} \).
\medskip

Third, 
it is straighforward to deduce that the inverse mapping~\( \FromConvBELIEFtoACTIONPAYOFFSET \)
in~\eqref{eq:isomorphism_b} also
sends infinite sums to infinite sums, and products to products
(see the proof of \cite[Lemma~4.22]{Baccelli-Cohen-Olsder-Quadrat:1992}). 
%
%
As \( \bp{ \Conv{\BELIEF}, \oplus, \otimes } \) is a complete 
dioid,
we conclude that its image \( \bp{ \ACTIONPAYOFFSET, \oplus, \otimes } \)
by the mapping~\( \FromConvBELIEFtoACTIONPAYOFFSET \) in~\eqref{eq:isomorphism_b}
is also a complete 
dioid, because the mapping~\( \FromConvBELIEFtoACTIONPAYOFFSET \)
sends infinite sums to infinite sums, and products to products.
As \( \epsilon=-\infty \) is the neutral (or zero) element of~\( \Conv{\BELIEF} \),
and~\( e=0 \) is the unit element of~\( \Conv{\BELIEF} \),
we get that
\( \FromConvBELIEFtoACTIONPAYOFFSET\np{-\infty} = \emptyset \)
is the neutral (or zero) element of~\( \ACTIONPAYOFFSET \)
and \( \FromConvBELIEFtoACTIONPAYOFFSET\np{0} = \RR_{-}^{\NATURE} \)
is the unit element of~\( \ACTIONPAYOFFSET \).
%
\medskip

Fourth, we conclude that the mappings
\( \FromACTIONPAYOFFSETtoConvBELIEF \) and \( \FromConvBELIEFtoACTIONPAYOFFSET \)
in~\eqref{eq:isomorphism} are \lsc\ homomorphisms.
Indeed, they both sends infinite sums to infinite sums, products to products,
neutral (or zero) element to neutral (or zero) element,
and unit element to unit element.



 \medskip

 This ends the proof.
\end{proof}

Notice that the proof goes reverse to the Proposition~\ref{pr:isomorphism} statement:
we do not check that \( \bp{ \ACTIONPAYOFFSET, \oplus, \otimes } \) satisfies the axioms of a complete dioid,
but use \lsc\ homomorphisms to deduce it (at the end of the proof) from the fact that
\( \bp{ \Conv{\BELIEF}, \oplus, \otimes } \) is a dioid.
We take this route because, if it is easy to establish that \( \bp{ \ACTIONPAYOFFSET, \oplus, \otimes } \) 
is a commutative algebra with idempotent~$\oplus$, it is not immediate to show that it is associative
and distributive.

\subsection{Proof of Proposition~\ref{pr:isomorphism_continuous}}
\label{Proof_of_Proposition_ref_pr:isomorphism_continuous}

\begin{proof}
  \begin{enumerate}
   \item
    (Proof of Item~\ref{it:isomorphism_continuous_functions})
    
By Proposition~\ref{pr:dioids_of_value_functions_over_beliefs},
\( \bp{ \RegularConv{\BELIEF} \cup \na{-\infty}, \oplus, \otimes } \) is a diod.

\item
    (Proof of Item~\ref{it:isomorphism_continuous_subsets})

    By Proposition~\ref{pr:isomorphism}, the image
    \( \bp{ \REGULARACTIONPAYOFFSET \cup \na{\emptyset}, \oplus, \otimes } \)
    of the dioid \( \bp{ \RegularConv{\BELIEF} \cup \na{-\infty}, \oplus, \otimes } \)
by the homomorphism~\eqref{eq:isomorphism_b} is a diod.

Now, we prove~\eqref{eq:two_operations_subsets_otimes_continuous}.
    Let \( \ACTION, \ACTIONbis \in\REGULARACTIONPAYOFFSET \) (the case where one of them is \( \emptyset \)
    is trivial).
    As \(  \ACTION, \ACTIONbis \) are convex sets, we have already seen
    in~\eqref{eq:two_operations_subsets_otimes_alternative_expression} that
    the operation~\eqref{eq:two_operations_subsets_otimes}
    has the alternative expression
    \( \ACTION \otimes \ACTIONbis = \overline{\ACTION + \ACTIONbis} \).
    We are going to prove that \( \overline{\ACTION + \ACTIONbis}= {\ACTION + \ACTIONbis} \).

    Let \( \actionter \in \overline{\ACTION + \ACTIONbis} \). Thus, there exists two sequences
    \( \sequence{\action^{n}}{n\in\NN} \subset \ACTION \) and
    \( \sequence{\actionbis^{n}}{n\in\NN} \subset \ACTIONbis \)
    such that \( \lim_{n\to +\infty} \np{\action^{n}+\actionbis^{n}}=\actionter \).
    We are going to show that the sequence 
    \( \sequence{\np{\action^{n},\actionbis^{n}}}{n\in\NN} \subset \ACTION\times\ACTIONbis \)
    admits a convergent subsequence.

    \begin{subequations}
On the one hand, by assumption, the sequence 
\( \sequence{{\action^{n}+\actionbis^{n}}}{n\in\NN} \subset \ACTION+\ACTIONbis \)
converges, hence is bounded for any of the equivalent norms on~\( \RR^{\NATURE} \).
Using the \( \ell_{\infty} \)-norm, we thus get that there exists \( C\in\RR \) such that
\begin{equation}
  C \leq \action^{n}_{\nature}+\actionbis^{n}_{\nature} \eqsepv \forall n\in\NN
  \eqsepv \forall \nature\in\NATURE
  \eqfinp 
  \label{eq:two_operations_subsets_otimes_REGULARACTIONPAYOFFSET_lemma_a}
\end{equation}
On the other hand, by Definition~\ref{de:4C_action_set} of~\( \REGULARACTIONPAYOFFSET \),
the functions~\( \RestrictedSupportFunction{\ACTION} \) and
\( \RestrictedSupportFunction{\ACTIONbis} \) are bounded on~\( \BELIEF \).
We denote by \( \delta_{\nature} \in \BELIEF \) the Dirac mass at \( \nature\in\NATURE \).
As \( \action^{n}_{\nature}=\scalpro{\delta_{\nature}}{\action^{n}}
\leq \RestrictedSupportFunction{\ACTION}\np{\delta_{\nature}} \)
and
\( \actionbis^{n}_{\nature}=\scalpro{\delta_{\nature}}{\actionbis^{n}}
\leq \RestrictedSupportFunction{\ACTIONbis}\np{\delta_{\nature}} \), for all \( \nature\in\NATURE \),
we thus get that \(  \action^{n}_{\nature} \leq
\max_{\nature\in\NATURE}\RestrictedSupportFunction{\ACTION}\np{\delta_{\nature}} < +\infty \)
and \(  \actionbis^{n}_{\nature} \leq
\max_{\nature\in\NATURE}\RestrictedSupportFunction{\ACTIONbis}\np{\delta_{\nature}} < +\infty \).
As a consequence, there exists \( D\in\RR \) such that
\begin{equation}
\action^{n}_{\nature} \leq D \text{ and }
  \actionbis^{n}_{\nature} \leq D \eqsepv \forall n\in\NN \eqsepv \forall \nature\in\NATURE
  \eqfinp 
  \label{eq:two_operations_subsets_otimes_REGULARACTIONPAYOFFSET_lemma_b}
\end{equation}
From~\eqref{eq:two_operations_subsets_otimes_REGULARACTIONPAYOFFSET_lemma_a}
and~\eqref{eq:two_operations_subsets_otimes_REGULARACTIONPAYOFFSET_lemma_b},
we deduce that 
\begin{equation}
C-D \leq \action^{n}_{\nature} \leq D \text{ and }
C-D \leq \actionbis^{n}_{\nature} \leq D  \eqsepv \forall n\in\NN \eqsepv \forall \nature\in\NATURE
  \eqfinp 
  \label{eq:two_operations_subsets_otimes_REGULARACTIONPAYOFFSET_lemma_c}
\end{equation}
Thus, the sequence 
\( \bpsequence{\np{\action^{n},\actionbis^{n}}}{n\in\NN} \subset \ACTION\times\ACTIONbis \) is bounded
in~\( \RR^{\NATURE} \) and, as a consequence, admits a convergent subsequence:
\( \bpsequence{\np{\action^{n_m},\actionbis^{n_m}}}{m\in\NN} \to_{m\to +\infty}
\np{\action,\actionbis}\in\ACTION\times\ACTIONbis \), as \( \ACTION\) and \(\ACTIONbis \)
are closed sets.
\end{subequations}
As \( \lim_{n\to +\infty} \np{\action^{n}+\actionbis^{n}}=\actionter \), we deduce that
\( \action+\actionbis = \lim_{m\to +\infty}\action^{n_m} + \lim_{m\to +\infty}\actionbis^{n_m}
= \lim_{m\to +\infty} \np{\action^{n_m}+\actionbis^{n_m}}=\actionter \),
hence that \( \actionter \in \ACTION+\ACTIONbis \).

Thus, we have shown that  \( \overline{\ACTION + \ACTIONbis} \subset {\ACTION + \ACTIONbis} \).

  \item
    (Proof of Item~\ref{it:isomorphism_continuous_isomorphism})

    This follows from Proposition~\ref{pr:isomorphism}.
  \end{enumerate}

\end{proof}

\subsection{Proof of Theorem~\ref{th:more_valuable_information}}

\begin{proof}\quad

 \noindent$\bullet$
 That Item~\ref{it:values_more_information_strongly} implies
 Item~\ref{it:values_more_information} has been explained in the discussion
 following Definition~\ref{de:relative_VoI}.
 \bigskip 

  \noindent$\bullet$
  The equivalence between Item~\ref{it:values_more_information}
and Item~\ref{it:difference_is_convex} follows from 
  \begin{align*}
    &
      \text{the decision-maker~$\MORE$ values more information than the
    decision-maker~$\LESS$}
    \\
  \iff  &
      \VoI_{\MORE}\np{\va{\beliefbis}} \geq \VoI_{\LESS}\np{\va{\beliefbis}} 
          \eqsepv \forall \va{\beliefbis}\in\InformationStructures\np{\Omega,\BELIEF}
          \tag{by~\eqref{eq:VoIMore} in Definition~\ref{de:VoIMore}}
    \\
        \iff &
\EE \bc{\RestrictedSupportFunction{\MORE}\np{\va{\beliefbis}}}-\RestrictedSupportFunction{\MORE}\bp{\EE\nc{\va{\beliefbis}}}
      \geq \EE
      \bc{\RestrictedSupportFunction{\LESS}\np{\va{\beliefbis}}}-\RestrictedSupportFunction{\LESS}\bp{\EE\nc{\va{\beliefbis}}}
               \eqsepv \forall \va{\beliefbis}\in\InformationStructures\np{\Omega,\BELIEF}
\intertext{by definition~\eqref{eq:VoI} of the value of information~$\VoI_{\ACTION}\np{\va{\beliefbis}}$} 
   \iff &
 \EE \bc{\RestrictedSupportFunction{\MORE}\np{\va{\beliefbis}}-\RestrictedSupportFunction{\LESS}\np{\va{\beliefbis}}}
      \geq \EE \bc{ \RestrictedSupportFunction{\MORE}\bp{\EE\nc{\va{\beliefbis}}}
          -\RestrictedSupportFunction{\LESS}\bp{\EE\nc{\va{\beliefbis}}} }
      \eqsepv \forall \va{\beliefbis}\in\InformationStructures\np{\Omega,\BELIEF}
          \intertext{because we have commented, right after
          Definition~\ref{de:VoI}, that both \(
          \EE\bc{\RestrictedSupportFunction{\LESS}\np{\va{\beliefbis}}} \)
          and \( \RestrictedSupportFunction{\MORE}\bp{\EE\nc{\va{\beliefbis}}} \) belong to~\( \RR \)}
    \iff &
 \mtext{the function } \RestrictedSupportFunction{\MORE}-\RestrictedSupportFunction{\LESS} \mtext{ is convex on } \BELIEF
  \end{align*}
  by restricting to random variables \(
  \va{\beliefbis}\in\InformationStructures\np{\Omega,\BELIEF} \) taking a finite number of
  values in~$\BELIEF$ to obtain the implication~$\implies$,
  and  a straighforward application of Jensen inequality
\cite[Lemma~2.5]{Kallenberg:2002} 
  to the convex function 
\( \RestrictedSupportFunction{\MORE}-\RestrictedSupportFunction{\LESS} \colon \BELIEF \to\RR \) to obtain the reverse
implication~$\impliedby$.
 \bigskip 

  \noindent$\bullet$
  That Item~\ref{it:difference_is_convex} implies
Item~\ref{it:values_more_information_strongly} 
follows from a straighforward application of Jensen inequality,
but with conditional expectations, to the convex function 
\( \RestrictedSupportFunction{\MORE}-\RestrictedSupportFunction{\LESS} \colon \BELIEF \to\RR \).
We refer the reader to \cite[p.~33]{Doob-1953},
\cite[Chap.~II,41.4]{Dellacherie-Meyer:1975}, 
\cite{Artstein-Wets:1993}, \cite{Artstein1999:gains}, among others.
  \bigskip

\noindent$\bullet$
Thus, we have shown that Item~\ref{it:values_more_information_strongly},
 Item~\ref{it:values_more_information} 
 and Item~\ref{it:difference_is_convex} are equivalent.
  \bigskip

 
\noindent$\bullet$
We prove that Item~\ref{it:fusion} 
implies Item~\ref{it:difference_is_convex}.
Suppose that there exists a \RegularActionPayoffSet~\( \MEDIUM \in \REGULARACTIONPAYOFFSET \) such that
\( \MORE={\LESS\otimes\MEDIUM} \).
We have that 
\begin{align*}
  \RestrictedSupportFunction{\MORE}-\RestrictedSupportFunction{\LESS}
  &=
     \RestrictedSupportFunction{{\LESS\otimes\MEDIUM}} -\RestrictedSupportFunction{\LESS}
       \tag{as \( \MORE={\LESS\otimes\MEDIUM} \) by assumption}
      \\
  &=
\RestrictedSupportFunction{\LESS}\otimes\RestrictedSupportFunction{\MEDIUM}-\RestrictedSupportFunction{\LESS}
 \tag{since the mapping~\( \FromACTIONPAYOFFSETtoConvBELIEF \)
    in~\eqref{eq:isomorphism_a_continuous} is a homomorphism by
    Item~\ref{it:isomorphism_continuous_isomorphism} in Proposition~\ref{pr:isomorphism_continuous}}
  \\
    &=
    \RestrictedSupportFunction{\LESS} \LowPlus \RestrictedSupportFunction{\MEDIUM} -\RestrictedSupportFunction{\LESS}
       \tag{by~\eqref{eq:two_operations_functions_oplus}}
       \\
    &=
    \RestrictedSupportFunction{\MEDIUM}
\end{align*}
as $\RestrictedSupportFunction{\LESS}$ takes values in~$\RR$
since \( \LESS \) is a \RegularActionPayoffSet\ by assumption
(see Definition~\ref{de:4C_action_set}).
      We conclude that the function \(
      \RestrictedSupportFunction{\MORE}-\RestrictedSupportFunction{\LESS} \) is convex, because
      \( \RestrictedSupportFunction{\MEDIUM} \) is a convex function.
\bigskip

\noindent$\bullet$
We show that Item~\ref{it:difference_is_convex} implies Item~\ref{it:fusion}.

Since \( \MORE \) and \( \LESS \) are \RegularActionPayoffSet s by assumption,
the functions~$\RestrictedSupportFunction{\MORE}$ and $\RestrictedSupportFunction{\LESS}$
are continuous bounded (see Definition~\ref{de:4C_action_set}).
 We set 
\( \fonctionun=\RestrictedSupportFunction{\MORE}-\RestrictedSupportFunction{\LESS} \colon \BELIEF \to\RR \).
Thus, the function~\( \fonctionun \) is continuous bounded
as the difference of the two continuous bounded
functions~$\RestrictedSupportFunction{\MORE}$ and $\RestrictedSupportFunction{\LESS}$,
and is convex by assumption.
Thus, \( \fonctionun \in \RegularConv{\BELIEF} \). By~\eqref{eq:one-to-one_isomorphisms_b},
we get that \( \fonctionun = \RestrictedSupportFunction{\MEDIUM} \),
      where the subset \( \MEDIUM=\InverseRestrictedSupportFunction{\fonctionun} \)
      is defined in~\eqref{eq:isomorphism_b_continuous} and is such that
      \( \MEDIUM \in \REGULARACTIONPAYOFFSET \), by Definition~\ref{de:4C_action_set}.
      Summing up, we have obtained that \( \RestrictedSupportFunction{\MORE}-\RestrictedSupportFunction{\LESS}
      = \RestrictedSupportFunction{\MEDIUM} \), hence that
      \begin{align*}
        \RestrictedSupportFunction{\MORE}
        &=
          \RestrictedSupportFunction{\LESS} + \RestrictedSupportFunction{\MEDIUM}
        \intertext{as $\RestrictedSupportFunction{\LESS}$ takes values in~$\RR$
since \( \LESS \) is a \RegularActionPayoffSet\ by assumption
(see Definition~\ref{de:4C_action_set})}
        &=
 \RestrictedSupportFunction{\LESS} \otimes \RestrictedSupportFunction{\MEDIUM}
          \tag{by definition~\eqref{eq:two_operations_functions_oplus_continuous} of~\( \otimes \)}
        \\
  &=
     \RestrictedSupportFunction{{\LESS\otimes\MEDIUM}}
 \tag{since the mapping~\( \FromACTIONPAYOFFSETtoConvBELIEF \)
    in~\eqref{eq:isomorphism_a_continuous} is a homomorphism by
    Item~\ref{it:isomorphism_continuous_isomorphism} in Proposition~\ref{pr:isomorphism_continuous}}    
    \eqfinp
      \end{align*}
      Now, all three sets \( \MORE \), \( \LESS \) and \( \MEDIUM \) belong to~\( \REGULARACTIONPAYOFFSET \).
As \( \bp{ \REGULARACTIONPAYOFFSET \cup \na{\emptyset}, \oplus, \otimes } \) is a dioid
(see Definition~\ref{de:4C_action_set}), we get that 
\( {\LESS\otimes\MEDIUM} \in \REGULARACTIONPAYOFFSET \).
Then, from \( \RestrictedSupportFunction{\MORE}=\RestrictedSupportFunction{{\LESS\otimes\MEDIUM}} \)
with \( \MORE, {\LESS\otimes\MEDIUM} \in \REGULARACTIONPAYOFFSET \),
we deduce that \( \MORE = {\LESS\otimes\MEDIUM} \) as the mapping~\( \FromACTIONPAYOFFSETtoConvBELIEF \)
in~\eqref{eq:isomorphism_a_continuous} is injective.

That \( \MORE = {\LESS\otimes\MEDIUM} 
={\LESS + \MEDIUM } \) 
follows from~\eqref{eq:two_operations_subsets_otimes_continuous}.
\bigskip

\noindent$\bullet$
It is obvious that Item~\ref{it:fusion_star-difference} implies Item~\ref{it:fusion}.
We show that Item~\ref{it:fusion} implies Item~\ref{it:fusion_star-difference}.

  Suppose that Item~\ref{it:fusion} holds true:
  there exists a~\RegularActionPayoffSet~\( \MEDIUM \in \REGULARACTIONPAYOFFSET \) such that
  \( \MORE=\overline{\LESS +\MEDIUM}={\LESS +\MEDIUM} \).

  It is easy to see, by~\eqref{eq:stardifference}, that \( \MORE\stardifference\LESS \) is the largest set
  \( \MEDIUM' \subset \RR^{\NATURE} \) such that  \( \LESS +\MEDIUM' \subset \MORE \).
  As \( \LESS +\MEDIUM  = \MORE \), we deduce that
  \( \MEDIUM \subset \MORE\stardifference\LESS \).
  On the one hand, we get that \( \MORE\stardifference\LESS \neq \emptyset \) as
  \( \MEDIUM \in \REGULARACTIONPAYOFFSET \), hence is not empty.
  On the other hand, we get that 
  \( \MORE=\LESS +\MEDIUM \subset \LESS + \np{\MORE\stardifference\LESS} \subset \MORE \), hence that 
  \( \MORE= \LESS + \np{\MORE\stardifference\LESS} \).
  As \( \MORE \) is closed, we also get that 
  \( \MORE= \overline{\LESS + \np{\MORE\stardifference\LESS}} \).

  There remains to prove that \( \MORE\stardifference\LESS \in \REGULARACTIONPAYOFFSET \).
  First, we prove that \( \MORE\stardifference\LESS  \in \ACTIONPAYOFFSET \) because
  \begin{align*}
    \np{\MORE\stardifference\LESS} \otimes \RR_{-}^{\NATURE}
    &=
      \overline{ \np{\MORE\stardifference\LESS} + \RR_{-}^{\NATURE} }
      \tag{by~\eqref{eq:two_operations_subsets_otimes_alternative_expression}}
      \\
    &=
      \overline{ \bp{ \bigcap_{\primal\in \LESS} \np{\MORE -\primal} } + \RR_{-}^{\NATURE} }
      \tag{by definition~\eqref{eq:stardifference} of \( \MORE\stardifference\LESS \)}
    \\
    & \subset
      \overline{ \bp{ \bigcap_{\primal\in \LESS} \np{\MORE+ \RR_{-}^{\NATURE} -\primal} }  }
      \tag{as easily seen}
    \\
    & \subset
      \overline{ \bp{ \bigcap_{\primal\in \LESS} \np{\overline{\MORE+ \RR_{-}^{\NATURE}} -\primal} }  }
      \tag{obvious}
    \\
    &=
     \overline{ \bp{ \bigcap_{\primal\in \LESS} \np{{\MORE \otimes \RR_{-}^{\NATURE}} -\primal} }  }
      \tag{by~\eqref{eq:two_operations_subsets_otimes_alternative_expression}}
    \\
    &=
      \overline{ \bp{ \bigcap_{\primal\in \LESS} \np{\MORE -\primal} }  }
\tag{as \( {\MORE \otimes \RR_{-}^{\NATURE}} = \MORE \) since \( \MORE \in \REGULARACTIONPAYOFFSET \)}
         \\
    &=
      \overline{\MORE\stardifference\LESS}
 \tag{by definition~\eqref{eq:stardifference} of \( \MORE\stardifference\LESS \)}
               \\
    &=
      \MORE\stardifference\LESS
      \intertext{as \( \MORE\stardifference\LESS = \bigcap_{\primal\in \LESS} \np{\MORE -\primal} \)
      is closed, as the intersection of closed sets by definition~\eqref{eq:stardifference},
 since \( \MORE \in \REGULARACTIONPAYOFFSET \) is closed}
    & \subset
      \overline{ \np{\MORE\stardifference\LESS} + \RR_{-}^{\NATURE} }
      \tag{obvious}
    \\
    &=
      \np{\MORE\stardifference\LESS} \otimes \RR_{-}^{\NATURE}
      \eqfinp
            \tag{by~\eqref{eq:two_operations_subsets_otimes_alternative_expression}}
  \end{align*}
  We deduce that \( \np{\MORE\stardifference\LESS} \otimes \RR_{-}^{\NATURE}
  = \MORE\stardifference\LESS \), that is, we have proven that
  \( \MORE\stardifference\LESS  \in \ACTIONPAYOFFSET \)
    by Item~\ref{it:EffectiveComprehensiveSet_otimes_ACTION} in Proposition~\ref{pr:EffectiveComprehensiveSet}.

    Second, we show that \( \MORE\stardifference\LESS \in \REGULARACTIONPAYOFFSET \).
    From \( \MORE= \LESS + \np{\MORE\stardifference\LESS} \), we deduce that
    \( \RestrictedSupportFunction{\MORE}=\RestrictedSupportFunction{\LESS}
    \LowPlus \RestrictedSupportFunction{\MORE\stardifference\LESS} \) by~\eqref{eq:support_function_Minkowski},
    hence that \( \RestrictedSupportFunction{\MORE\stardifference\LESS} =
    \RestrictedSupportFunction{\MORE}- \RestrictedSupportFunction{\LESS} \) since
    \(  \RestrictedSupportFunction{\LESS} \) takes finite values as 
    \( \LESS \in \REGULARACTIONPAYOFFSET \) by assumption. 
Since \( \MORE \) and \( \LESS \) are \RegularActionPayoffSet s by assumption,
the functions~$\RestrictedSupportFunction{\MORE}$ and $\RestrictedSupportFunction{\LESS}$
are continuous bounded (see Definition~\ref{de:4C_action_set}).
Thus, the function~\(  \RestrictedSupportFunction{\MORE\stardifference\LESS} \) is continuous bounded
as the difference of two continuous bounded functions,
and we conclude that \( \MORE\stardifference\LESS \in \REGULARACTIONPAYOFFSET \) by 
Definition~\ref{de:4C_action_set}.

  \medskip

 This ends the proof.  

\end{proof}


\subsection{Proof of Proposition~\ref{pr:additive_flexibility_more_valuable_information}}

\begin{proof}

  \begin{enumerate}
    \item
      (Proof of Item~\ref{it:additive_flexibility_more_valuable_information_CNS})

      Follows from the equivalence between Item~\ref{it:values_more_information}
--- the decision-maker~\( \LESS \oplus \ACTION \) values more information than the decision-maker~\( \LESS \) ---
      and Item~\ref{it:difference_is_convex} 
      --- the function \( \RestrictedSupportFunction{\LESS \oplus \ACTION}-\RestrictedSupportFunction{\LESS} \)
      is convex.
Now, by the homomorphism \( \FromACTIONPAYOFFSETtoConvBELIEF \colon 
\REGULARACTIONPAYOFFSET \to \RegularConv{\BELIEF} \) in~\eqref{eq:isomorphism_a_continuous},
we get that 
\( \RestrictedSupportFunction{\LESS \oplus \ACTION} =
\RestrictedSupportFunction{\LESS} \oplus \RestrictedSupportFunction{\ACTION} =
\max\ba{\RestrictedSupportFunction{\LESS}, \RestrictedSupportFunction{\ACTION} } \),
and then 
\( \RestrictedSupportFunction{\LESS \oplus \ACTION}-\RestrictedSupportFunction{\LESS}
= \max\ba{0, \RestrictedSupportFunction{\ACTION}-\RestrictedSupportFunction{\LESS}} \). 

    \item
      (Proof of Item~\ref{it:additive_flexibility_more_valuable_information_CN})

      If the decision-maker~\( \LESS \oplus \ACTION \) values more information than the decision-maker~\( \LESS \),
      we have just shown that 
      \( \fonctiondual=\max\ba{0, \RestrictedSupportFunction{\ACTION}-\RestrictedSupportFunction{\LESS}} \)
      is a convex function. Then, necessarily, its level set
      \( \na{ \fonctiondual \leq 0 } = \na{ \RestrictedSupportFunction{\ACTION}-\RestrictedSupportFunction{\LESS}
        \leq 0 } = \na{ \RestrictedSupportFunction{\ACTION} \leq \RestrictedSupportFunction{\LESS} } \)
      is a convex set.

      If the decision-maker~\( \LESS \oplus \ACTION \) values more information than the decision-maker~\( \LESS \),
      then there exists a~\RegularActionPayoffSet~\( \MEDIUM \in \REGULARACTIONPAYOFFSET \) such that
\( \LESS \oplus \ACTION =\LESS +\MEDIUM = \LESS\otimes\MEDIUM \)
by the equivalence between Item~\ref{it:values_more_information} and
Item~\ref{it:fusion} in Theorem~\ref{th:more_valuable_information}.
%
Hence any element 
\( \actionter\in \LESS \oplus \ACTION 
= \LESS+\MEDIUM \)
is of the form \( \actionter=\less+\medium \) where \( \less\in\LESS \) and \( \more\in\MORE \).
By Equation~\eqref{eq:Normal_cone_Minkowski}, we have that
\( \NormalCone\np{ \overline{\LESS +\MEDIUM}, \less+\medium }
= \NormalCone\np{\LESS,\less} \cap \NormalCone\np{\MEDIUM,\medium} \).
Thus, as \( \LESS +\MEDIUM =\overline{\LESS +\MEDIUM} \) 
by~\eqref{eq:two_operations_subsets_otimes_continuous}
and~\eqref{eq:two_operations_subsets_otimes_alternative_expression},
we get that
\( \NormalCone\np{ \LESS\oplus\ACTION, \actionter}
  = \NormalCone\np{ \LESS\oplus\ACTION, \less+\medium }
= \NormalCone\np{ \overline{\LESS +\MEDIUM}, \less+\medium }
= \NormalCone\np{\LESS,\less} \cap \NormalCone\np{\MEDIUM,\medium}
\subset \NormalCone\np{\LESS,\less} \), which is refinement.

Before going on, we show that, for any \( \action\in\ACTION \),
\begin{equation}
  \NORMALCONE\np{\LESS\oplus\ACTION,\action} =
\na{ \SupportFunction{\LESS} \leq \SupportFunction{\ACTION}=
  \scalpro{\cdot}{\action} }
\eqfinp
\label{eq:NORMALCONEnpLESSoplusACTIONaction}
\end{equation}
Indeed, we have that\footnote{%
In the same way, we prove that, for any \( \less\in\LESS \),
  \( \NORMALCONE\np{\LESS\oplus\ACTION,\less}=
  \na{ \SupportFunction{\ACTION} \leq \SupportFunction{\LESS} =\scalpro{\cdot}{\less} }
\subset \NORMALCONE\np{\LESS,\less} \), and that, for any 
\( \alpha\in\OpenIntervalOpen{0}{1} \), \( \less\in\LESS \) and \( \action\in\ACTION \),
\( \NORMALCONE\np{\LESS\oplus\ACTION,\alpha\less+(1-\alpha)\action}
=\na{ \scalpro{\cdot}{\action} = \SupportFunction{\ACTION} = \SupportFunction{\LESS} =\scalpro{\cdot}{\less} }
\subset \NORMALCONE\np{\LESS,\less} \).}
\begin{align*}
  \dual \in \NORMALCONE\np{\LESS\oplus\ACTION,\action}
  &\iff
    \SupportFunction{\LESS\oplus\ACTION}\np{\dual}=
    \scalpro{\dual}{\action}
    \tag{by~\eqref{eq:NormalCone_support_function}}
  \\
  &\iff
    \max\ba{\SupportFunction{\LESS}\np{\dual}, \SupportFunction{\ACTION}\np{\dual}}=
    \scalpro{\dual}{\action}
  \tag{by~\eqref{eq:two_operations_functions_oplus} and~\eqref{eq:two_operations_subsets_oplus}}
      \\
  &\iff
    \SupportFunction{\LESS}\np{\dual} \leq \SupportFunction{\ACTION}\np{\dual}=
    \scalpro{\dual}{\action}
    \tag{as \( \scalpro{\dual}{\action} \leq \SupportFunction{\ACTION}\np{\dual} \)
    by~\eqref{eq:support_function} since \( \action\in\ACTION \)}
    \eqfinp
\end{align*}
Now, suppose that the normal cone lattice~$\NORMALCONE(\LESS\oplus\ACTION)$ refines (is included in)
the normal cone lattice~$\NORMALCONE(\LESS)$. As a consequence,\footnote{%
  The condition that we provide is necessary and ``almost sufficient''.
  Indeed, we have seen that, for any 
\( \alpha\in \na{1}\cup\OpenIntervalOpen{0}{1} \), \( \less\in\LESS \) and \( \action\in\ACTION \),
\( \NORMALCONE\np{\LESS\oplus\ACTION,\alpha\less+(1-\alpha)\action}
=\na{ \scalpro{\cdot}{\action} = \SupportFunction{\ACTION} = \SupportFunction{\LESS} =\scalpro{\cdot}{\less} }
\subset \NORMALCONE\np{\LESS,\less} \).
  Thus, if \( \convexhull\np{\LESS\cup\ACTION} \) is closed, it is equal to
\( {\LESS\oplus\ACTION} = \closedconvexhull\np{\LESS\cup\ACTION} \), and then
  the condition~\eqref{eq:additive_flexibility_more_valuable_information_CN_bis} is equivalent to refinement.
  Else, we need to study \( \NORMALCONE\np{\LESS\oplus\ACTION,\actionter} \) where
  \( \actionter \in {\LESS\oplus\ACTION}\setminus\convexhull\np{\LESS\cup\ACTION} =
  \closedconvexhull\np{\LESS\cup\ACTION}\setminus\convexhull\np{\LESS\cup\ACTION} \).
  \label{ft:almost}
}
we get that, for any \( \action\in\ACTION \), 
there exists \( \less\in\LESS \) such that
\( \NORMALCONE\np{\LESS\oplus\ACTION,\action} \subset \NORMALCONE\np{\LESS,\less} \), that is,
by~\eqref{eq:NORMALCONEnpLESSoplusACTIONaction}, 
\begin{equation}
  \forall\action\in\ACTION \eqsepv \exists \less\in\LESS \eqsepv
\na{ \SupportFunction{\LESS} \leq \SupportFunction{\ACTION}=
  \scalpro{\cdot}{\action} }
\subset
\na{ \SupportFunction{\LESS}=\scalpro{\cdot}{\less} }
\eqfinv
\label{eq:additive_flexibility_more_valuable_information_CN_bis}
\end{equation}
from which we obtain~\eqref{eq:additive_flexibility_more_valuable_information_CN} by
the restriction~\eqref{eq:support_function_restriction}.

    \item
      (Proof of Item~\ref{it:additive_flexibility_more_valuable_information_CS})      

  If there exists \( \ACTIONbis  \in \REGULARACTIONPAYOFFSET \) such that
  \( \ACTION = \LESS \otimes \ACTIONbis \),
  then \( \LESS \oplus \ACTION = \np{\LESS \otimes \RR_{-}^{\NATURE}} \oplus \np{\LESS \otimes \ACTIONbis}
  = \LESS \otimes \np{ \RR_{-}^{\NATURE} \oplus \ACTIONbis } \).
  We conclude thanks to the equivalence between Item~\ref{it:fusion} and Item~\ref{it:values_more_information} 
  in Theorem~\ref{th:more_valuable_information}.

Suppose that there exists \( \less\in\LESS \) such that~\eqref{eq:additive_flexibility_more_valuable_information_CS} holds true, that is, 
\( \na{ \RestrictedSupportFunction{\ACTION} \geq \scalpro{\cdot}{\less} }
\subset
\na{ \RestrictedSupportFunction{\ACTION} \geq \RestrictedSupportFunction{\LESS} }
\subset 
\na{ \RestrictedSupportFunction{\LESS}=\scalpro{\cdot}{\less} }
\).
We have that
\begin{align*}
  \RestrictedSupportFunction{\LESS \oplus \ACTION}-\RestrictedSupportFunction{\LESS}
  &=
  \max\ba{0,\RestrictedSupportFunction{\ACTION}-\RestrictedSupportFunction{\LESS}}
  \\
  &=
  \begin{cases}
    \RestrictedSupportFunction{\ACTION}-\RestrictedSupportFunction{\LESS}
    & \text{on }
    \na{ \RestrictedSupportFunction{\ACTION} \geq \RestrictedSupportFunction{\LESS} }
  \\
  0
    & \text{on }
    \na{ \RestrictedSupportFunction{\ACTION} < \RestrictedSupportFunction{\LESS} }
\end{cases}
  \\
  &=
  \begin{cases}
    \RestrictedSupportFunction{\ACTION}-\scalpro{\cdot}{\less} 
    & \text{on }
    \na{ \RestrictedSupportFunction{\ACTION} \geq \RestrictedSupportFunction{\LESS} }
  \\
  0
    & \text{on }
    \na{ \RestrictedSupportFunction{\ACTION} < \RestrictedSupportFunction{\LESS} }
  \end{cases}
      \intertext{by assumption that \( \na{ \RestrictedSupportFunction{\ACTION} \geq \RestrictedSupportFunction{\LESS} }
\subset 
\na{ \RestrictedSupportFunction{\LESS}=\scalpro{\cdot}{\less} }
      \) in~\eqref{eq:additive_flexibility_more_valuable_information_CS}}
  &=
  \begin{cases}
    \RestrictedSupportFunction{\ACTION}-\scalpro{\cdot}{\less} 
    & \text{on }
    \na{ \RestrictedSupportFunction{\ACTION} \geq \scalpro{\cdot}{\less} }
  \\
  0
    & \text{on }
    \na{ \RestrictedSupportFunction{\ACTION} < \scalpro{\cdot}{\less} } 
  \end{cases}
      \intertext{because \( \na{ \RestrictedSupportFunction{\ACTION} \geq \RestrictedSupportFunction{\LESS} }
      = \na{ \RestrictedSupportFunction{\ACTION} \geq \scalpro{\cdot}{\less} } \) as, on the one hand,
\( \na{ \RestrictedSupportFunction{\ACTION} \geq \RestrictedSupportFunction{\LESS} }
      \subset \na{ \RestrictedSupportFunction{\ACTION} \geq \scalpro{\cdot}{\less} } \)
      by definition~\eqref{eq:support_function} of the support function~\( \SupportFunction{\LESS} \)
      since \( \less\in\LESS \) by assumption,
      and as, on the other hand, \( \na{ \RestrictedSupportFunction{\ACTION} \geq \scalpro{\cdot}{\less} }
\subset
\na{ \RestrictedSupportFunction{\ACTION} \geq \RestrictedSupportFunction{\LESS} } \)
by assumption~\eqref{eq:additive_flexibility_more_valuable_information_CS}
      }
    &=
      \max\ba{0,\RestrictedSupportFunction{\ACTION}-\scalpro{\cdot}{\less} }
  \\
    &=
      \max\ba{\RestrictedSupportFunction{\RR_{-}^{\NATURE}},
      \RestrictedSupportFunction{\ACTION-\less} }
      \tag{as \( 0=\RestrictedSupportFunction{\RR_{-}^{\NATURE}} \) and
      \( \RestrictedSupportFunction{\ACTION}-\scalpro{\cdot}{\less}=
      \RestrictedSupportFunction{\ACTION}+ \RestrictedSupportFunction{-\na{\less}}=
      \RestrictedSupportFunction{\ACTION-\less} \)  by~\eqref{eq:support_function_Minkowski}}
  \\
    &=
      \RestrictedSupportFunction{\RR_{-}^{\NATURE}} \oplus
      \RestrictedSupportFunction{\ACTION-\less}
            \tag{by definition~\eqref{eq:two_operations_functions_oplus} of~$\oplus$}
  \\
    &=
      \RestrictedSupportFunction{\RR_{-}^{\NATURE} \oplus \np{\ACTION-\less}}
       \tag{by~\eqref{eq:RestrictedSupportFunction_homomorphism_oplus}}
      \eqfinp
\end{align*}
As this last function is 
convex, 
we conclude that 
the decision-maker~\( \LESS \oplus \ACTION \) values more information than the decision-maker~\( \LESS \)
thanks to Item~\ref{it:additive_flexibility_more_valuable_information_CNS} proven above.

We have just established that \( \RestrictedSupportFunction{\LESS \oplus \ACTION}-\RestrictedSupportFunction{\LESS}
= \RestrictedSupportFunction{\RR_{-}^{\NATURE} \oplus \np{\ACTION-\less}} \).
As \( \LESS \in \REGULARACTIONPAYOFFSET \), the function~\( \RestrictedSupportFunction{\LESS} \)
takes finite values, and we deduce that
\begin{equation*}
\RestrictedSupportFunction{\LESS \oplus \ACTION} = \RestrictedSupportFunction{\LESS} +
\RestrictedSupportFunction{\RR_{-}^{\NATURE} \oplus \np{\ACTION-\less}}
= \RestrictedSupportFunction{\LESS} \otimes
\RestrictedSupportFunction{\RR_{-}^{\NATURE} \oplus \np{\ACTION-\less}}
= \RestrictedSupportFunction{\LESS \otimes \np{\RR_{-}^{\NATURE} \oplus \np{\ACTION-\less} } }
\eqfinv
\end{equation*}
by~\eqref{eq:two_operations_functions_otimes}
and~\eqref{eq:RestrictedSupportFunction_homomorphism_otimes}.
Now, all sets \( \LESS \), \( \ACTION \), \( \RR_{-}^{\NATURE} \) and \( \ACTION-\less \) belong to~\( \REGULARACTIONPAYOFFSET \).
As \( \bp{ \REGULARACTIONPAYOFFSET \cup \na{\emptyset}, \oplus, \otimes } \) is a dioid
by Item~\ref{it:isomorphism_continuous_subsets} in Proposition~\ref{pr:isomorphism_continuous}, we get that
\( {\LESS \oplus \ACTION} = {\LESS \otimes \np{\RR_{-}^{\NATURE} \oplus \np{\ACTION-\less} } } \)
since the mapping~\( \FromACTIONPAYOFFSETtoConvBELIEF \)
in~\eqref{eq:isomorphism_a} is injective.
    \end{enumerate}
      
\end{proof}

\subsection{Proof of Proposition~\ref{pr:additive_flexibility_more_valuable_information_little}}
\label{Proof_of_Proposition_ref_pr:additive_flexibility_more_valuable_information_little}

\begin{proof}

  \begin{enumerate}
    \item
      (Proof of Item~\ref{it:additive_flexibility_more_valuable_information_little_CNS})

      We apply Item~\ref{it:additive_flexibility_more_valuable_information_CNS}
      in Proposition~\ref{pr:additive_flexibility_more_valuable_information}
      to the case where \( \ACTION = \RR_{-}^{\NATURE} \oplus \na{\hat{\neutral}} \),
      hence \( \RestrictedSupportFunction{\ACTION}= \scalpro{\cdot}{\hat{\neutral}} \),
      and obtain that the decision-maker~\( \LESS \oplus \np{ \RR_{-}^{\NATURE} \otimes \na{\hat{\neutral}} } \)
      values more information than the decision-maker~\( \LESS \)
if and only if 
\( \max\ba{0, \scalpro{\cdot}{\hat{\neutral}}-\RestrictedSupportFunction{\LESS}} \) is a convex function
on~\( \BELIEF \). 

    \item
      (Proof of Item~\ref{it:additive_flexibility_more_valuable_information_little_CN})

      Suppose that the decision-maker~\( \LESS \oplus \np{ \RR_{-}^{\NATURE}
        \otimes \na{\hat{\neutral}} } \)
      values more information than the decision-maker~\( \LESS \).
      We apply Item~\ref{it:additive_flexibility_more_valuable_information_CN}
      in Proposition~\ref{pr:additive_flexibility_more_valuable_information}
      to the case where \( \ACTION = \RR_{-}^{\NATURE} \oplus \na{\hat{\neutral}} \),
      and obtain that
      \begin{itemize}
      \item
 \( \na{ \scalpro{\cdot}{\hat{\neutral}} \leq\RestrictedSupportFunction{\LESS} } \) is a convex subset
      of~\( \BELIEF \), using that         
      \( \RestrictedSupportFunction{\ACTION}= \scalpro{\cdot}{\hat{\neutral}} \),
    \item 
the normal cone lattice~$\NORMALCONE(\LESS\oplus\na{\hat{\neutral}})$ refines (is included in)
the normal cone lattice~$\NORMALCONE(\LESS)$.
\end{itemize}
Now, by~\eqref{eq:NORMALCONEnpLESSoplusACTIONaction}, we get that
\( \NORMALCONE\bp{\LESS\oplus\na{\hat{\neutral}},\hat{\neutral}} =
\ba{ \max\na{\SupportFunction{\LESS},\scalpro{\cdot}{\hat{\neutral}}}
  = \scalpro{\cdot}{\hat{\neutral}} } = \na{ \scalpro{\cdot}{\hat{\neutral}} \geq \SupportFunction{\LESS} } \) and,
 by~\eqref{eq:Normal_cone_Minkowski}, that
\( \NORMALCONE\np{\LESS,\less} = \na{ \scalpro{\cdot}{\less} = \SupportFunction{\LESS} } \).
As a consequence, there exists \( \less\in\LESS \) such that 
\( \NORMALCONE\bp{\LESS\oplus\na{\hat{\neutral}},\hat{\neutral}} \subset
\NORMALCONE\np{\LESS,\less} \) if and only if 
there exists \( \less\in\LESS \) such that 
\( \na{ \scalpro{\cdot}{\hat{\neutral}} \geq \SupportFunction{\LESS} }
\subset \na{ \scalpro{\cdot}{\less} = \SupportFunction{\LESS} } \).
We obtain~\eqref{eq:additive_flexibility_more_valuable_information_little_CN} by intersecting the
above equality with~\( \BELIEF \) and then using the restriction~\eqref{eq:support_function_restriction}
of the support function~\eqref{eq:support_function}.

    \item
      (Proof of Item~\ref{it:additive_flexibility_more_valuable_information_little_CS})

Suppose that there exists \( \less\in\LESS \) such
that~\eqref{eq:additive_flexibility_more_valuable_information_little_CS}
holds true.
By appling Item~\ref{it:additive_flexibility_more_valuable_information_CS}
      in Proposition~\ref{pr:additive_flexibility_more_valuable_information}
      to the case where \( \ACTION = \RR_{-}^{\NATURE} \otimes \na{\hat{\neutral}} \),
      hence \( \RestrictedSupportFunction{\ACTION}= \scalpro{\cdot}{\hat{\neutral}} \),
      we obtain that the decision-maker~\( \LESS \oplus \np{ \RR_{-}^{\NATURE} \otimes \na{\hat{\neutral}} } \)
      values more information than the decision-maker~\( \LESS \),
      and that Equations~\eqref{eq:additive_flexibility_more_valuable_information_little_CS_convex_function}
      and~\eqref{eq:additive_flexibility_more_valuable_information_little_CS} hold true,
      using that
      \begin{align*}
        \RR_{-}^{\NATURE} \oplus \np{\ACTION-\less}
        &=
          \RR_{-}^{\NATURE} \oplus \bp{ \np{\RR_{-}^{\NATURE} \otimes \na{\hat{\neutral}} }-\less}
          \tag{as \( \ACTION = \RR_{-}^{\NATURE} \otimes \na{\hat{\neutral}} \)}
        \\
        &=
         \np{ \RR_{-}^{\NATURE}\otimes \na{0} } \oplus \np{ \RR_{-}^{\NATURE} \otimes \na{\hat{\neutral}-\less} }
\tag{easy from~\eqref{eq:two_operations_subsets_otimes}}
        \\
        &=
          \RR_{-}^{\NATURE}\otimes \np{ \na{0} \oplus \na{\hat{\neutral}-\less} }
          \tag{by distributivity}
        \\
        &=
          \RR_{-}^{\NATURE}\otimes \ClosedIntervalClosed{0}{\hat{\neutral}-\less}
          \eqfinv
          \tag{by definition~\eqref{eq:two_operations_subsets_oplus} of~\( \oplus \)}
      \end{align*}
 where \( \ClosedIntervalClosed{0}{\hat{\neutral}-\less} \subset \RR^{\NATURE} \)
 denotes the segment between~$0$ and~\( \hat{\neutral}-\less \).

  \end{enumerate}
      
\end{proof}

\bibliographystyle{alpha}
\bibliography{DeLara,InformationMoreValuable}

\end{document}